\providecommand{\U}[1]{\protect\rule{.1in}{.1in}}
\newtheorem{theorem}{Theorem}[section]
\newtheorem{definition}{Definition}[section]
\newtheorem{proposition}{Proposition}
\newtheorem{remark}{Remark}[section]
\newenvironment{proof}[1][Proof]{\noindent\textbf{#1} }{\ \rule{0.5em}{0.5em}}
\begin{document}

\title{The Foundations of Analysis}
\author{Larry Clifton\\larry@cliftonlabs.net}
\date{March 22, 2013}
\maketitle
\tableofcontents

\section{Introduction}

Here we introduce the real numbers. On the one hand, this is a modern
introduction based on morphisms between objects in an algebraic category. On
the other hand, it is an ancient introduction with 24 of the theorems dating
back to about 300 B.C. It is hoped that the complete and elementary nature of
this work will show that it is practical to introduce the real numbers from a
categorical perspective to students who may never study abstract algebra, but
will use real numbers on a regular basis in their future course work and
professional careers.

In this endeavor we have been inspired by Landau's little book
\emph{Grundlagen der Analysis}\cite{Landau little book} from which we have
appropriated the title of the present work.

\section{Magnitude Spaces}

\begin{definition}
\label{definition magnitude}A \textbf{magnitude space} is a nonempty set $M$
together with a binary operation on $M$, which we usually denote by $+$, such
that for any $a,b,c\in M$

(i)(associativity) $a+\left(  b+c\right)  =\left(  a+b\right)  +c$,

(ii)(commutativity) $a+b=b+a$, and

(iii)(trichotomy) exactly one of the following is true: $a=b+d$ for some

$d\in M$, or $a=b$, or $b=a+d$ for some $d\in M$.
\end{definition}

\begin{definition}
[The whole is greater than the part.]\label{definition <}If $a$ and $b$ are
elements of a magnitude space $M$ and $b=a+d$ for some $d\in M$, we say $a$ is
\textbf{less than} $b$ and write $a<b$ or equivalently we say $b$ is
\textbf{greater than} $a$ and write $b>a$.
\end{definition}

\begin{remark}
Since $a$ and $b$ are smaller than $a+b$ and hence not equal to $a+b$, a
magnitude space does not have an additive identity or zero element.
\end{remark}

The concept of magnitude spaces has a long history. That the concept was
recognized as a formal abstraction before 300 B.C. is evidenced by the
following two quotes. The first is Proposition 16 from Book V of Euclid's
\emph{Elements}.

\begin{quotation}
\textbf{Given--}Four proportionate magnitudes $a$, $b$, $c$, $d$ with $a$ to
$b$ the same as $c$ to $d$.

\textbf{To be Shown--}The alternates will also be a proportion, $a$ to $c$ the
same as $b$ to $d$.
\end{quotation}

The second, dating from a generation or two before Euclid, is a comment by
Aristotle (Posterior Analytics I, 5, 20).

\begin{quotation}
Alternation used to be demonstrated separately of numbers, lines, solids, and
time intervals, though it could have been proved of them all by a single
demonstration. Because there was no single name to denote that in which
numbers, lines, solids, and time intervals are identical, and because they
differed specifically from one another, this property was proved of each of
them separately. Today, however, the proof is commensurately universal, for
they do not possess this attribute as numbers, lines, solids, and time
intervals, but as manifesting this generic character which they are postulated
as possessing universally.
\end{quotation}

We do not have an explicit list of the properties which the various kinds of
magnitudes are postulated as possessing universally from this time period. All
we have are a number of \textquotedblleft Common Notions\textquotedblright%
\ such as \textquotedblleft the whole is greater than the
part\textquotedblright\ and \textquotedblleft if equals are subtracted from
equals, the remainders will be equal.\textquotedblright\ The only complete
work on proportions surviving from this period is Book V of Euclid's
\emph{Elements}. Here we find rigorous reasoning without an explicit
foundation. We do not know if there was an explicit foundation generally known
at the time. What we do know is that, starting with the definition of
magnitude spaces above, we can establish a foundation by which the
propositions in Book V of Euclid can be proved in accordance with present day
standards of rigor.\cite{Stein} And from the propositions in Book V of Euclid
there follows the theory establishing the real numbers in the modern
categorical sense.

\section{Functions}

Here we review function terminology and a few basic theorems.

\begin{definition}
To indicate that $\varphi$ is a function from a set $S$ into a set $S^{\prime
}$, we write $\varphi:S\rightarrow S^{\prime}$. We also refer to functions as
mappings and if $\varphi a=b$ we say that $\varphi$ \textbf{maps} $a$ into $b$.
\end{definition}

\begin{definition}
\label{definition equal functions}Two functions $\varphi$ and $\psi$ from a
set $S$ into a set $S^{\prime}$ are \textbf{equal} if for all $a\in S$,
$\varphi a=\psi a$. And in this case we write $\varphi=\psi$.
\end{definition}

\begin{definition}
\label{definition one-to-one}A function $\varphi:S\rightarrow S^{\prime}$ is
\textbf{one-to-one} if for all $a,b\in S$, $a\neq b$ implies $\varphi
a\neq\varphi b$.
\end{definition}

\begin{definition}
\label{definition onto}A function $\varphi:S\rightarrow S^{\prime}$ is
\textbf{onto} if for each $a^{\prime}\in S^{\prime}$ there is some $a\in S$
such that $\varphi a=a^{\prime}$. And in this case we say that $\varphi$ maps
$S$ \textbf{onto} $S^{\prime}.$
\end{definition}

\begin{definition}
\label{definition composition}If $\varphi_{1}:S_{1}\rightarrow S_{2}$ and
$\ \chi:S_{2}\rightarrow S_{3}$ are functions, then we define the
\textbf{composition} $\chi\circ\varphi:S_{1}\rightarrow S_{3}$ by $\left(
\chi\circ\varphi\right)  a=\chi\left(  \varphi a\right)  $ for all $a\in
S_{1}$. In most cases, we omit the $\circ$ symbol between the functions and
define the composition $\chi\varphi:S_{1}\rightarrow S_{3}$ by $\left(
\chi\varphi\right)  a=\chi\left(  \varphi a\right)  $
\end{definition}

\begin{definition}
\label{definition identity function}The \textbf{identity function}
$i_{S}:S\rightarrow S$ maps every element of a set $S$ to itself. I.e. for all
$a\in S$, $i_{S}a=a$.
\end{definition}

\begin{theorem}
\label{composition is associative}Composition of functions is associative
\end{theorem}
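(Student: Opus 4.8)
The plan is to unwind the definition of composition twice on each side and check that the two resulting expressions agree pointwise, then invoke Definition~\ref{definition equal functions} to conclude equality of the functions.

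More precisely, suppose we are given functions $\varphi:S_{1}\rightarrow S_{2}$, $\psi:S_{2}\rightarrow S_{3}$, and $\chi:S_{3}\rightarrow S_{4}$. Both $\chi(\psi\varphi)$ and $(\chi\psi)\varphi$ are functions from $S_{1}$ into $S_{4}$, so by Definition~\ref{definition equal functions} it suffices to show that they agree on every $a\in S_{1}$. First I would fix an arbitrary $a\in S_{1}$ and compute $\bigl(\chi(\psi\varphi)\bigr)a$ by applying Definition~\ref{definition composition}: this equals $\chi\bigl((\psi\varphi)a\bigr)$, which equals $\chi\bigl(\psi(\varphi a)\bigr)$ by a second application of the definition. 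Next I would compute $\bigl((\chi\psi)\varphi\bigr)a = (\chi\psi)(\varphi a) = \chi\bigl(\psi(\varphi a)\bigr)$, again by two applications of Definition~\ref{definition composition}. Since both expressions reduce to the same thing, $\chi\bigl(\psi(\varphi a)\bigr)$, they are equal for every $a\in S_{1}$, and hence the two composite functions are equal.

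There is essentially no obstacle here: the result is a formal consequence of the definition of composition together with the pointwise criterion for function equality. The only point requiring any care is bookkeeping of the domains and codomains, namely verifying that all four composites in play ($\psi\varphi$, $\chi\psi$, $\chi(\psi\varphi)$, $(\chi\psi)\varphi$) are legitimately defined and that the outer two share the common domain $S_{1}$ and codomain $S_{4}$ so that Definition~\ref{definition equal functions} applies. Once that is noted, the chain of equalities is immediate.
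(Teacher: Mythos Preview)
Your proof is correct and follows essentially the same approach as the paper: fix an arbitrary element, unwind the definition of composition twice on each side to reach the common value $\chi(\psi(\varphi a))$, and conclude equality of the composites via Definition~\ref{definition equal functions}. The only cosmetic difference is the choice of variable names for the three functions and the four sets.
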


\begin{proof}
Assume $S,S^{\prime},S^{\prime\prime}$ and $\varphi:S\rightarrow S^{\prime}$,
$\chi:S^{\prime}\rightarrow S^{\prime\prime}$, $\psi:S^{\prime\prime
}\rightarrow S^{\prime\prime\prime}$ are functions. If $a\in S$ then%
\begin{align*}
\left(  \psi\left(  \chi\varphi\right)  \right)  a  & =\psi\left(  \left(
\chi\varphi\right)  a\right)  \text{ (Definition \ref{definition composition}%
)}\\
& =\psi\left(  \chi\left(  \varphi a\right)  \right)  \text{ (Definition
\ref{definition composition})}\\
& =\left(  \psi\chi\right)  \left(  \varphi a\right)  \text{ (Definition
\ref{definition composition})}\\
& =\left(  \left(  \psi\chi\right)  \varphi\right)  a\text{ (Definition
\ref{definition composition})}%
\end{align*}
and therefore $\psi\left(  \chi\varphi\right)  =\left(  \psi\chi\right)
\varphi$ by Definition \ref{definition equal functions}.
\end{proof}

\begin{theorem}
\label{identity 1-1 and onto}The identity function $i_{S}:S\rightarrow S$ is
one-to-one and onto.
\end{theorem}

\begin{proof}
Suppose $a,b\in S$ and $a\neq b$. Now $i_{S}a=a$ and $i_{S}b=b$ by Definition
\ref{definition identity function} and hence $i_{S}a\neq i_{S}b$. Therefore
$i_{S}$ is one-to one according to Definition \ref{definition one-to-one}.

And for any $a\in S$ there is some $c\in S$ (namely $c=a$) such that
$i_{S}c=a$ by Definition \ref{definition identity function}. Therefore $i_{S}$
is onto according to Definition \ref{definition onto}.
\end{proof}

\begin{theorem}
\label{proving function is onto}If $\varphi:S\rightarrow S^{\prime}$,
$\psi:S^{\prime}\rightarrow S$, and $\psi\varphi=i_{S}$, then $\psi$ is onto.
\end{theorem}

\begin{proof}
For any $a\in S$%
\begin{align*}
a  & =i_{S}a\text{ (Definition \ref{definition identity function})}\\
& =\left(  \psi\varphi\right)  a\text{ (Definition
\ref{definition equal functions})}\\
& =\psi\left(  \varphi a\right)  \text{. (Definition
\ref{definition composition})}%
\end{align*}
Thus there is an element in $S^{\prime}$ (namely $\varphi a$) which $\psi$
maps into $a$. Therefore $\psi$ is onto according to Definition
\ref{definition onto}.
\end{proof}

\begin{remark}
In the preceding theorem, we could also conclude that $\varphi$ is one-to-one.
\end{remark}

\section{Basic Equalities and Inequalities}

\begin{definition}
\label{definition trichotomous}A binary relation $<$ on a set $S$ is
\textbf{trichotomous} if for all $a,b\in S$, exactly one of the following is
true: $b<a$, or $a=b$, or $a<b$.
\end{definition}

\begin{definition}
\label{definition transitive}A binary relation $<$ on a set $S$ is
\textbf{transitive} if for all $a,b,c\in S$, $a<b$ and $b<c$ implies $a<c$.
\end{definition}

\begin{definition}
\label{definition strict linear order}A binary relation is a \textbf{strict
linear order}\ if it is trichotomous and transitive.
\end{definition}

\begin{definition}
Binary relations $<$ and $>$ on a set $S$ are \textbf{inverses} (to each
other) if for all $a,b\in S$, $a<b\,\ $if and only if $b>a$.
\end{definition}

\begin{theorem}
\label{order preserving functions}If $S$ and $S^{\prime}$ are sets with
trichotomous relations $<$ (and inverse relations $>$) and $\varphi
:S\rightarrow S^{\prime}$ is a function such that for all $a,b\in S$, $a<b$
implies $\varphi a<\varphi b$, then for all $a,b\in S$, $\varphi a$ has to
$\varphi b$ the same relation (%
$<$%
, =, or
$>$%
) as $a$ has to $b$, and $\varphi$ is one-to-one.
\end{theorem}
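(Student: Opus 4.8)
The plan is to prove this by a case analysis on the trichotomous relation between $a$ and $b$ in $S$, using the hypothesis that $\varphi$ is order-preserving in the $<$ direction, together with the fact that $<$ is trichotomous on $S'$ as well. The statement really bundles two conclusions: first, that $\varphi$ "preserves the relation" in the strong sense that $a<b \iff \varphi a < \varphi b$, $a = b \iff \varphi a = \varphi b$, and $a > b \iff \varphi a > \varphi b$; and second, that $\varphi$ is one-to-one. I expect the one-to-one conclusion to fall out essentially for free once the relation-preservation is established.

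First I would fix $a,b \in S$ and split into the three mutually exclusive, exhaustive cases furnished by trichotomy on $S$: $a < b$, $a = b$, or $a > b$. In the case $a < b$, the hypothesis gives $\varphi a < \varphi b$ directly. In the case $a = b$, we get $\varphi a = \varphi b$ from Definition \ref{definition equal functions} (a function sends equal inputs to equal outputs — here just substitution). In the case $a > b$, i.e. $b < a$, the hypothesis applied with the roles swapped gives $\varphi b < \varphi a$, that is $\varphi a > \varphi b$ using the inverse-relation definition. So in each case $\varphi a$ stands in the corresponding relation to $\varphi b$.

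Next I would argue the converse direction, which is where the trichotomy on $S'$ does the real work. Suppose, say, $\varphi a < \varphi b$. By trichotomy on $S$, exactly one of $a<b$, $a=b$, $a>b$ holds; but the latter two would force $\varphi a = \varphi b$ or $\varphi a > \varphi b$ respectively by the forward direction just proved, each contradicting $\varphi a < \varphi b$ by trichotomy on $S'$. Hence $a < b$. The same elimination argument handles $\varphi a = \varphi b \Rightarrow a = b$ and $\varphi a > \varphi b \Rightarrow a > b$. This establishes that $\varphi a$ has to $\varphi b$ exactly the relation that $a$ has to $b$.

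Finally, one-to-one is immediate: if $a \neq b$ then by trichotomy on $S$ either $a<b$ or $a>b$, and by the forward direction $\varphi a < \varphi b$ or $\varphi a > \varphi b$; either way $\varphi a \neq \varphi b$ by trichotomy on $S'$, so $\varphi$ is one-to-one by Definition \ref{definition one-to-one}. The main obstacle — really the only subtlety — is being careful that each inference "$X$ is false, $Y$ is false, therefore $Z$" is licensed by the *exactly one* clause of trichotomy (Definition \ref{definition trichotomous}) on the appropriate set, and not conflating the trichotomy hypothesis on $S$ with the one on $S'$; the rest is routine bookkeeping with the inverse-relation definition.
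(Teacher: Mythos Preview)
Your proposal is correct and follows essentially the same approach as the paper: forward implications by case split on trichotomy in $S$, converse implications by elimination using trichotomy in $S'$, and one-to-one as an immediate corollary. The paper's proof is terser but structurally identical, including the explicit worked example of deducing $a<b$ from $\varphi a<\varphi b$ by ruling out $a=b$ and $a>b$.
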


\begin{proof}
For $a,b\in S$ the three mutually exclusive cases%
\[
a<b\text{, or }a=b\text{, or }a>b
\]
imply, by assumption, the three mutually exclusive cases%
\[
\varphi a<\varphi b\text{, or }\varphi a=\varphi b\text{, or }\varphi
a>\varphi b
\]
respectively. The three converse implications follow from trichotomy. For
instance, assume $\varphi a<\varphi b$. If $a=b$, then $\varphi a=\varphi b$
which contradicts the assumption. If $a>b$, then $\varphi a>\varphi b$ which
also contradicts the assumption. Since $a=b$ and $a>b$ are incompatible with
our assumption, $a<b$ by trichotomy. Thus $\varphi a<\varphi b$ implies $a<b$.

I say $\varphi$ is one-to-one. For if $a,b\in S$ and $a\neq b$, then $a<b$ or
$a>b$ by trichotomy and hence $\varphi a<\varphi b$ or $\varphi b<\varphi a$
and hence $\varphi a\neq\varphi b$ by trichotomy. We have now shown that if
$a\neq b$ then $\varphi a\neq\varphi b$. Therefore $\varphi$ is one-to-one
according to Definition \ref{definition one-to-one}.
\end{proof}

\ 

In the remainder of this section lower case variables $a,$ $b,$ $c,$ and $d$
are elements of a magnitude space $M$.

\begin{theorem}
\label{< is trichotomous}The $<$ relation in a magnitude space is trichotomous.
\end{theorem}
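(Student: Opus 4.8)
The plan is to observe that the trichotomy condition for $<$ is, once Definition \ref{definition <} is unwound, essentially a verbatim restatement of axiom (iii) of Definition \ref{definition magnitude}. First I would fix arbitrary $a,b\in M$ and recall that, by Definition \ref{definition <}, the assertion $a<b$ means precisely ``$b=a+d$ for some $d\in M$'', and symmetrically $b<a$ means precisely ``$a=b+d$ for some $d\in M$''. The key point to pin down here is the bookkeeping: the clause ``$a=b+d$ for some $d$'' of the axiom corresponds to $b<a$, not to $a<b$, since the definition of $<$ reverses the roles of the two elements.

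Next I would invoke axiom (iii) of Definition \ref{definition magnitude} for the pair $a,b$: exactly one of ``$a=b+d$ for some $d\in M$'', ``$a=b$'', ``$b=a+d$ for some $d\in M$'' holds. Substituting the translations from the first step, this says that exactly one of $b<a$, $a=b$, $a<b$ holds. Note that ``exactly one'' already packages both the ``at least one holds'' and ``at most one holds'' assertions, and both come directly from the axiom, so nothing further needs to be argued.

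Finally, since $a$ and $b$ were arbitrary elements of $M$, the condition just established is precisely the defining property of a trichotomous relation in Definition \ref{definition trichotomous}, so the $<$ relation in $M$ is trichotomous.

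I do not anticipate a real obstacle: the argument is a translation between two phrasings of the same three-way alternative. The only place that warrants care is making the correspondence between the existential clauses of the axiom and the two directions of the $<$ relation fully explicit, so that no clause is inadvertently matched with the wrong inequality.
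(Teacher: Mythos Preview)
Your proposal is correct and follows essentially the same approach as the paper: invoke axiom (iii) of Definition~\ref{definition magnitude}, translate each clause via Definition~\ref{definition <}, and conclude by Definition~\ref{definition trichotomous}. Your write-up is more explicit about matching each existential clause to the correct direction of $<$, but the underlying argument is identical.
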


\begin{proof}
From Definition \ref{definition magnitude}, exactly one of the following is
true: $a=b+d$ for some $d\in M$, $a=b$, or $b=a+d$ for some $d\in M$. From
Definition \ref{definition <}, exactly one of the following is true: $b<a$, or
$a=b$, or $a<b$. Therefore$\ $the $<$ relation in a magnitude space is
trichotomous according to Definition \ref{definition trichotomous}.
\end{proof}

\begin{theorem}
[Translation Invariance]\label{Translation by a}If $b<c$, then $a+b<a+c$.
\end{theorem}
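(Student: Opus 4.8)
The plan is to unwind the definition of $<$ (Definition~\ref{definition <}), apply the commutativity and associativity axioms from Definition~\ref{definition magnitude}, and re-fold the definition of $<$. Concretely, from $b<c$ I get, by Definition~\ref{definition <}, an element $d\in M$ with $c=b+d$. The goal is to produce an element witnessing $a+b<a+c$, i.e.\ an element $e\in M$ with $a+c=(a+b)+e$.

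The natural candidate is $e=d$ itself. First I would substitute $c=b+d$ into $a+c$ to get $a+c=a+(b+d)$. Then a short chain using associativity (to regroup) gives $a+(b+d)=(a+b)+d$. Therefore $a+c=(a+b)+d$ with $d\in M$, which is precisely the statement that $a+b<a+c$ by Definition~\ref{definition <}. The whole argument is a four-line display:
\begin{align*}
a+c & = a+(b+d) \text{ (since } c=b+d \text{ by Definition \ref{definition <})}\\
 & = (a+b)+d \text{ (associativity, Definition \ref{definition magnitude}).}
\end{align*}

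There is essentially no obstacle here: this is a one-step computation with the axioms, and commutativity is not even needed if one regroups correctly. The only thing to be careful about is that the witness $d$ produced by $b<c$ must be shown to serve as a witness for $a+b<a+c$ without any side condition, which it does since membership in $M$ is all that Definition~\ref{definition <} requires of it. So the "hard part" is merely bookkeeping — invoking Definition~\ref{definition <} in both directions and citing the associativity axiom in between.
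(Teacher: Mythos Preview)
Your proof is correct and essentially identical to the paper's own proof: obtain $d$ with $c=b+d$ from Definition~\ref{definition <}, compute $a+c=a+(b+d)=(a+b)+d$ via associativity, and conclude $a+b<a+c$ by Definition~\ref{definition <}. You are also right that commutativity is not needed; the paper likewise cites only associativity.
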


\begin{proof}
If $b<c$, then $c=b+d$ for some $d$ by Definition \ref{definition <}. Hence
\[
a+c=a+\left(  b+d\right)  =\left(  a+b\right)  +d
\]
by Definition \ref{definition magnitude} (associativity) and therefore
$a+b<a+c$ according to Definition \ref{definition <}.
\end{proof}

\begin{theorem}
[If equals are added to equals or unequals ...]%
\label{equals added to equals or unequals}$a+b$ has to $a+c$ the same relation
(%
$<$%
, =, or
$>$%
) as $b$ has to $c$.
\end{theorem}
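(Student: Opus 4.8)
The plan is to apply Theorem \ref{order preserving functions} with the function $\varphi:M\rightarrow M$ defined by $\varphi x=a+x$. The theorem at hand is essentially the statement that this translation function carries the trichotomous relation $<$ to itself, which is exactly the conclusion Theorem \ref{order preserving functions} delivers once we verify its hypothesis.

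First I would note that $<$ is trichotomous on $M$ by Theorem \ref{< is trichotomous}, so $M$ (playing the role of both $S$ and $S^{\prime}$) carries a trichotomous relation with an inverse relation $>$. Next I would check the order-preserving hypothesis: if $b<c$ then $\varphi b=a+b<a+c=\varphi c$, which is precisely Theorem \ref{Translation by a} (Translation Invariance). Having verified both hypotheses, Theorem \ref{order preserving functions} immediately yields that $\varphi b=a+b$ has to $\varphi c=a+c$ the same relation ($<$, $=$, or $>$) as $b$ has to $c$, which is the desired conclusion.

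There is really no hard part here — the work has all been done in the two preceding theorems, and this result is a clean corollary obtained by specializing the general order-preserving principle to the translation map. The only thing to be careful about is the bookkeeping of identifying $S=S^{\prime}=M$ and confirming that the same symbol $<$ serves on both sides, so that "the same relation" in the conclusion of Theorem \ref{order preserving functions} literally means the same relation on $M$. If one preferred a self-contained argument instead of invoking Theorem \ref{order preserving functions}, one could alternatively run the three-case trichotomy argument directly: the cases $b<c$, $b=c$, $b>c$ map under translation to $a+b<a+c$, $a+b=a+c$, $a+b>a+c$ respectively (using Theorem \ref{Translation by a} twice, once for each order direction, and the well-definedness of $+$ for the equality case), and the converse implications follow from trichotomy exactly as in the proof of Theorem \ref{order preserving functions}. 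But invoking the already-proved general theorem is the shorter route.
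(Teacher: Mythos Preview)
Your proposal is correct and matches the paper's own proof essentially line for line: the paper also fixes $a$, defines $\varphi:M\to M$ by $\varphi b=a+b$, invokes Theorem \ref{Translation by a} to verify that $b<c$ implies $\varphi b<\varphi c$, and then applies Theorem \ref{order preserving functions} to conclude.
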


\begin{proof}
Fix $a$ and define a function $\varphi:M\rightarrow M$ by $\varphi b=a+b$. If
$b<c$, then $\varphi b<\varphi c$ by the preceding theorem. Therefore $\varphi
b=a+b$ has to $\varphi c=a+c$ the same relation (%
$<$%
, =, or
$>$%
) as $b$ has to $c$ by Theorem \ref{order preserving functions}.
\end{proof}

\begin{remark}
If $a<b$, then, from Definition \ref{definition <}, there is some $d$ such
that $b=a+d$. In fact there is only one such element. For if $a+d=a+d^{\prime
}$, then $d=d^{\prime}$ by the preceding theorem.
\end{remark}

\begin{definition}
\label{definition subtraction}If $a<b$, we define $b-a$ to be the unique
element $d$ such that $b=a+d$.
\end{definition}

\begin{theorem}
\label{b-a < a}If $a<b$, then $b-a<b$.
\end{theorem}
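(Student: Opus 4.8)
The plan is to unwind the definition of subtraction and then invoke the definition of $<$ directly. By hypothesis $a<b$, so Definition \ref{definition subtraction} applies and gives us the element $d=b-a\in M$ characterized by $b=a+d$.

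Next I would rewrite this equation using commutativity from Definition \ref{definition magnitude}(ii), obtaining $b=d+a$. This exhibits $b$ as $d$ plus an element of $M$, namely $a$ itself, which is exactly the form required by Definition \ref{definition <} to conclude $d<b$. Substituting back $d=b-a$ yields $b-a<b$, as desired.

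There is essentially no obstacle here: the statement is an immediate consequence of the fact that $b=(b-a)+a$, so the ``whole is greater than the part'' principle of Definition \ref{definition <} applies with $a$ playing the role of the witnessing difference. The only point requiring a moment's care is making sure we are entitled to invoke Definition \ref{definition subtraction} at all, which is guaranteed by the hypothesis $a<b$ together with the uniqueness observation in the remark preceding that definition.
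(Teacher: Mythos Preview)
Your proof is correct and follows exactly the approach of the paper, which simply cites Definitions \ref{definition <} and \ref{definition subtraction}; you have merely spelled out the one-line argument in full, including the (implicit) use of commutativity to pass from $b=a+(b-a)$ to $b=(b-a)+a$.
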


\begin{proof}
Definitions \ref{definition <} and \ref{definition subtraction}
\end{proof}

\begin{remark}
If $a<b$, then $\left(  b-a\right)  +a=a+\left(  b-a\right)  =b$ since
$\left(  b-a\right)  $ is, by definition, the unique element which when added
to $a$ yields $b$.
\end{remark}

\begin{theorem}
[Transitivity of $<$]\label{Transitivity of >}If $a<b$ and $b<c$, then $a<c$
and $c-a=\left(  c-b\right)  +\left(  b-a\right)  $.
\end{theorem}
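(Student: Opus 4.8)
The plan is to establish the two conclusions in sequence: first that $a<c$, and then the explicit identity for $c-a$. For the first part, from $a<b$ and $b<c$ I would invoke Definition \ref{definition <} to obtain elements $d_{1},d_{2}\in M$ with $b=a+d_{1}$ and $c=b+d_{2}$. Substituting the first into the second and reassociating via Definition \ref{definition magnitude}(i) gives $c=\left(a+d_{1}\right)+d_{2}=a+\left(d_{1}+d_{2}\right)$, which by Definition \ref{definition <} says precisely $a<c$. (Alternatively one could cite Theorem \ref{Translation by a}: from $a<b$ translation invariance gives $d_{2}+a<d_{2}+b$, and rewriting yields $a<c$; but the direct computation is cleaner here since I will need the witnessing element anyway.)

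For the second part, I would identify $d_{1}$ and $d_{2}$ with the subtraction notation of Definition \ref{definition subtraction}. By the Remark following Theorem \ref{b-a < a}, the uniqueness built into that definition means $d_{1}=b-a$ and $d_{2}=c-b$. Then the computation above reads $c=a+\left(\left(c-b\right)+\left(b-a\right)\right)$, exhibiting $\left(c-b\right)+\left(b-a\right)$ as an element which added to $a$ yields $c$. Since $c-a$ is by definition the \emph{unique} such element, we conclude $c-a=\left(c-b\right)+\left(b-a\right)$.

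The main subtlety — not really an obstacle, but the point requiring care — is the appeal to uniqueness: the identity is not a formal manipulation of a partial "minus" operation but a statement that two elements of $M$ coincide because both solve the same equation $a+x=c$, and uniqueness of solutions is exactly the Remark preceding Definition \ref{definition subtraction} (which rests on Theorem \ref{equals added to equals or unequals}). I would make sure the write-up names that uniqueness explicitly rather than treating $-$ as if it satisfied ring-like cancellation laws a priori. Everything else is a one-line associativity rewrite, so the proof should be only a few lines.
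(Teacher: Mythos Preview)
Your proposal is correct and follows essentially the same route as the paper: the paper writes $c=(c-b)+b=(c-b)+(b-a)+a$ directly from Definition~\ref{definition subtraction}, then reads off $a<c$ and the identity for $c-a$, which is exactly your argument with the witnesses $d_{1},d_{2}$ named up front rather than via subtraction notation. Your explicit emphasis on the uniqueness underlying Definition~\ref{definition subtraction} is apt and matches how the paper's one-line proof is meant to be read.
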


\begin{proof}
If $a<b$ and $b<c$, then $c=\left(  c-b\right)  +b=\left(  c-b\right)
+\left(  b-a\right)  +a$ by Definition \ref{definition subtraction}. Hence
$a<c$ by Definition \ref{definition <} and $c-a=\left(  c-b\right)  +\left(
b-a\right)  $ according to Definition \ref{definition subtraction}.
\end{proof}

\begin{theorem}
The $<$ relation in a magnitude space is a strict linear order.
\end{theorem}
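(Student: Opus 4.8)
The plan is to invoke Definition \ref{definition strict linear order}, which requires exactly two things: that the $<$ relation is trichotomous and that it is transitive. Both of these have already been established in the preceding development, so the proof is essentially a matter of citation.

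First I would recall that trichotomy of $<$ in a magnitude space was proved in Theorem \ref{< is trichotomous}. Second I would recall that transitivity of $<$ was proved in Theorem \ref{Transitivity of >} (the statement there gives $a<c$ from $a<b$ and $b<c$, which is exactly the transitivity condition of Definition \ref{definition transitive}; the additional conclusion about $c-a$ is not needed here). Having both properties in hand, I would conclude by Definition \ref{definition strict linear order} that $<$ is a strict linear order.

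There is no real obstacle here: the theorem is a packaging statement that bundles two previously verified properties under the single name "strict linear order." The only thing to be careful about is citing the right earlier results and noting that the hypotheses of Definition \ref{definition transitive} match the hypotheses of Theorem \ref{Transitivity of >} verbatim.

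\begin{proof}
The $<$ relation in a magnitude space is trichotomous by Theorem \ref{< is trichotomous} and transitive by Theorem \ref{Transitivity of >}. Therefore it is a strict linear order according to Definition \ref{definition strict linear order}.
\end{proof}
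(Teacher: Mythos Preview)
Your proof is correct and matches the paper's own proof almost verbatim: cite Theorem \ref{< is trichotomous} for trichotomy, Theorem \ref{Transitivity of >} for transitivity, and conclude via Definition \ref{definition strict linear order}. The only cosmetic difference is that the paper also cites Definition \ref{definition transitive} when invoking the transitivity theorem.
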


\begin{proof}
The $<$ relation in a magnitude space is trichotomous by Theorem
\ref{< is trichotomous}. And it is transitive by the preceding theorem and
Definition \ref{definition transitive}. Therefore the $<$ relation in a
magnitude space is a strict linear order according to Definition
\ref{definition strict linear order}.
\end{proof}

\begin{theorem}
[If equals are subtracted from equals or unequals ...]%
\label{equals subtracted from equals or unequals}If $b>a$ and $c>a$, then
$b-a$ has to $c-a$ the same relation (%
$<$%
, =, or
$>$%
) as $b$ has to $c$.
\end{theorem}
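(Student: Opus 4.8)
The plan is to reduce this statement to Theorem~\ref{equals added to equals or unequals} (``if equals are added to equals or unequals'') by exhibiting $b$ and $c$ as the results of adding $a$ to $b-a$ and $c-a$ respectively. Concretely, since $b>a$ we have $b=a+(b-a)$ by the remark following Theorem~\ref{b-a < a} (i.e.\ $(b-a)+a=b$, so by commutativity $a+(b-a)=b$), and likewise $c=a+(c-a)$.

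First I would observe that, with $x=b-a$ and $y=c-a$ playing the roles of the variables in Theorem~\ref{equals added to equals or unequals}, that theorem tells us $a+x$ has to $a+y$ the same relation ($<$, $=$, or $>$) as $x$ has to $y$. Then I would substitute $a+x=b$ and $a+y=c$ from the previous paragraph, concluding that $b$ has to $c$ the same relation as $b-a$ has to $c-a$. Since the relations $<$, $=$, $>$ are the only three possibilities and the ``same relation'' correspondence is symmetric between the two sides, this is equivalent to the desired conclusion that $b-a$ has to $c-a$ the same relation as $b$ has to $c$.

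I do not anticipate a genuine obstacle here; the only point requiring a little care is the bookkeeping that turns $a+(b-a)$ into $b$ and $a+(c-a)$ into $c$, which rests on Definition~\ref{definition subtraction} together with commutativity from Definition~\ref{definition magnitude}. One could also phrase the argument more explicitly in the style of Theorem~\ref{equals added to equals or unequals}'s proof: fix $a$, define $\varphi$ on $\{m\in M: m>a\}$ (or on all of $M$) by $\varphi x = a+x$, note $\varphi$ is order-preserving by Theorem~\ref{Translation by a}, invoke Theorem~\ref{order preserving functions}, and then identify $\varphi(b-a)=b$ and $\varphi(c-a)=c$. Either route is short; the substitution-into-a-prior-theorem approach is the cleaner of the two and is the one I would write up.
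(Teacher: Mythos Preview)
Your proposal is correct and is essentially identical to the paper's proof: apply Theorem~\ref{equals added to equals or unequals} to $b-a$ and $c-a$, then use $a+(b-a)=b$ and $a+(c-a)=c$ from Definition~\ref{definition subtraction} to identify the translated elements with $b$ and $c$.
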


\begin{proof}
$a+\left(  b-a\right)  $ has to $a+\left(  c-a\right)  $ the same relation (%
$<$%
, =, or
$>$%
) as $\left(  b-a\right)  $ has to $\left(  c-a\right)  $ by Theorem
\ref{equals added to equals or unequals}. But $a+\left(  b-a\right)  =b$ and
$a+\left(  c-a\right)  =c$ by Definition \ref{definition subtraction}.
Therefore $b-a$ has to $c-a$ the same relation (%
$<$%
, =, or
$>$%
) as $b$ has to $c$.
\end{proof}

\begin{theorem}
\label{moving b accross <=>}If $a>b$, then $a$ has to $b+c$ the same relation
(%
$<$%
, =, or
$>$%
) as $a-b$ has to $c$.
\end{theorem}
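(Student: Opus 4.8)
The plan is to reduce this to Theorem \ref{equals added to equals or unequals} (``if equals are added to equals or unequals\dots'') by rewriting $a$ in a form that exposes a common summand $b$ with the expression $b+c$. The key observation is that the hypothesis $a>b$ gives us, via Definition \ref{definition subtraction} and the remark following Theorem \ref{b-a < a}, the identity $a=b+\left(a-b\right)$.

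First I would invoke $a>b$ to introduce $a-b$ and record that $b+\left(a-b\right)=a$. Next I would apply Theorem \ref{equals added to equals or unequals} in the form: $b+\left(a-b\right)$ has to $b+c$ the same relation ($<$, $=$, or $>$) as $\left(a-b\right)$ has to $c$. Finally I would substitute $b+\left(a-b\right)=a$ on the left-hand side of that comparison to conclude that $a$ has to $b+c$ the same relation as $a-b$ has to $c$, which is exactly the claim.

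There is essentially no obstacle here: the argument is a single substitution followed by one citation, mirroring the pattern used in the proof of Theorem \ref{equals subtracted from equals or unequals}. The only point requiring a modicum of care is making sure that $a-b$ is legitimately defined before it is used, which is guaranteed by the hypothesis $a>b$ together with Definition \ref{definition subtraction}.
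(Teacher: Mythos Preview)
Your proposal is correct and follows essentially the same approach as the paper: apply Theorem \ref{equals added to equals or unequals} to compare $b+(a-b)$ with $b+c$, then substitute $a=b+(a-b)$ from Definition \ref{definition subtraction}. The paper's proof is exactly this two-step argument.
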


\begin{proof}
$b+\left(  a-b\right)  $ has to $b+c$ the same relation (%
$<$%
, =, or
$>$%
) as $a-b$ has to $c$ by Theorem \ref{equals added to equals or unequals}. And
$a=b+\left(  a-b\right)  $ by Definition \ref{definition subtraction}.
Therefore $a$ has to $b+c$ the same relation (%
$<$%
, =, or
$>$%
) as $a-b$ has to $c$.
\end{proof}

\section{Magnitude Space Embeddings}

In this section $M$, $M^{\prime}$, and $M^{\prime\prime}$ are magnitude spaces.

\begin{definition}
\label{definition homomorphism}A mapping $\varphi:M\rightarrow M^{\prime}$ is
a \textbf{homomorphism} if $\varphi\left(  a+b\right)  =\varphi a+\varphi b $
for all $a,b\in M$. A homomorphism which is one-to-one is an
\textbf{embedding}.
\end{definition}

\begin{definition}
\label{definition isomorphism isomorphic}An embedding $\varphi:M\rightarrow
M^{\prime}$ which is onto as a mapping is an \textbf{isomorphism}. If there is
an isomorphism from $M$ onto $M^{\prime}$ we say $M$ is \textbf{isomorphic} to
$M^{\prime}$.
\end{definition}

\begin{definition}
\label{definition endomorphism automorphism}A homomorphism $\varphi
:M\rightarrow M$ of a magnitude space into itself is an \textbf{endomorphism}
and an endomorphism which is one-to-one and onto as a map is an
\textbf{automorphism}.
\end{definition}

\begin{definition}
\label{definition sum of embeddings}If $\varphi:M\rightarrow M^{\prime}$ and
$\chi:M\rightarrow M^{\prime}$ are two functions, their \textbf{sum} is the
function $\left(  \varphi+\chi\right)  :M\rightarrow M^{\prime}$ defined by
$\left(  \varphi+\chi\right)  a=\varphi a+\chi a$ for all $a\in M$.
\end{definition}

The next two theorems show that homomorphisms between magnitude spaces are
always embeddings.

\begin{theorem}
\label{morphisms monotonic}If $\varphi:M\rightarrow M^{\prime}$ is a
homomorphism and $a<b$, then\newline$\varphi a<\varphi b$ and $\varphi
b-\varphi a=\varphi\left(  b-a\right)  .$
\end{theorem}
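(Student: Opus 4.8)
The plan is to unwind the definitions of $<$ and of subtraction, and then push everything through the homomorphism property. First I would use the hypothesis $a<b$ together with Definition \ref{definition <} to produce an element $d\in M$ with $b=a+d$; by Definition \ref{definition subtraction} this $d$ is exactly $b-a$, so $b=a+(b-a)$.

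Next I would apply $\varphi$ to both sides of $b=a+(b-a)$ and invoke Definition \ref{definition homomorphism} to get
\[
\varphi b=\varphi\bigl(a+(b-a)\bigr)=\varphi a+\varphi(b-a).
\]
Since $\varphi(b-a)$ is an element of $M^{\prime}$, this equation says precisely that $\varphi b$ is $\varphi a$ plus some element of $M^{\prime}$, so $\varphi a<\varphi b$ by Definition \ref{definition <}.

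Finally, for the second conclusion I would appeal to the uniqueness of differences in a magnitude space (the Remark preceding Definition \ref{definition subtraction}): $\varphi b-\varphi a$ is by definition the \emph{unique} element of $M^{\prime}$ which, added to $\varphi a$, yields $\varphi b$. Since $\varphi(b-a)$ is such an element by the displayed equation, we conclude $\varphi b-\varphi a=\varphi(b-a)$ via Definition \ref{definition subtraction}.

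I do not anticipate a real obstacle here; the only point requiring care is to make sure the second equality is justified by \emph{uniqueness} of the difference rather than by merely exhibiting a witness, so the cleanest write-up keeps the homomorphism computation and the uniqueness citation clearly separated.
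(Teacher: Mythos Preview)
Your proposal is correct and follows essentially the same route as the paper: write $b=a+(b-a)$ via Definition \ref{definition subtraction}, apply the homomorphism property to obtain $\varphi b=\varphi a+\varphi(b-a)$, and read off both conclusions from Definitions \ref{definition <} and \ref{definition subtraction}. Your explicit emphasis on the uniqueness of the difference is exactly what the paper's citation of Definition \ref{definition subtraction} is doing, just stated more fully.
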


\begin{proof}
If $\varphi$ is a homomorphism and $a<b$, then%
\begin{align*}
\varphi b  & =\varphi\left(  a+\left(  b-a\right)  \right)  \text{ (Definition
\ref{definition subtraction})}\\
& =\varphi a+\varphi\left(  b-a\right)  \text{. (Definition
\ref{definition homomorphism})}%
\end{align*}
Hence $\varphi a<\varphi b$ according to Definition \ref{definition <} and
$\varphi b-\varphi a=\varphi\left(  b-a\right)  $ according to Definition
\ref{definition subtraction}.
\end{proof}

\begin{theorem}
\label{morphisms 1-1}If$\ \varphi:M\rightarrow M^{\prime}$ is a homomorphism,
then $\varphi a$ has to $\varphi b$ the same relation (%
$<$%
, =, or
$>$%
) as $a$ has to $b$ and $\varphi$ is an embedding.
\end{theorem}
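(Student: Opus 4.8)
The plan is to piece together results already established rather than argue from scratch. The key observation is that Theorem \ref{morphisms monotonic} gives exactly the hypothesis needed for Theorem \ref{order preserving functions}: namely, a homomorphism $\varphi:M\rightarrow M^{\prime}$ satisfies $a<b\Rightarrow\varphi a<\varphi b$ for all $a,b\in M$. So the bulk of the conclusion will come for free once the setup is checked.

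First I would note that both $M$ and $M^{\prime}$ carry trichotomous $<$ relations (with inverse relations $>$) by Theorem \ref{< is trichotomous}. Then I would invoke Theorem \ref{morphisms monotonic} to supply the order-preserving hypothesis $a<b\Rightarrow\varphi a<\varphi b$. Applying Theorem \ref{order preserving functions} with $S=M$ and $S^{\prime}=M^{\prime}$ then yields both desired conclusions at once: that $\varphi a$ has to $\varphi b$ the same relation ($<$, $=$, or $>$) as $a$ has to $b$, and that $\varphi$ is one-to-one.

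Finally I would close the loop on the last clause: $\varphi$ is by hypothesis a homomorphism, and it has just been shown to be one-to-one, so by Definition \ref{definition homomorphism} it is an embedding. I do not anticipate any real obstacle here — the only thing to be careful about is making sure the trichotomy of $<$ on \emph{both} magnitude spaces is explicitly cited, since Theorem \ref{order preserving functions} requires it of $S^{\prime}$ as well as $S$, and that the inverse relation $>$ is understood to be present on each (which it is, by Definition \ref{definition <}). Everything else is a direct appeal to prior theorems.
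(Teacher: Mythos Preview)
Your proposal is correct and follows essentially the same approach as the paper's proof: invoke Theorem \ref{morphisms monotonic} for the order-preserving property, apply Theorem \ref{order preserving functions} to obtain both the relation-preservation and one-to-one conclusions, then cite Definition \ref{definition homomorphism} to conclude $\varphi$ is an embedding. Your explicit citation of Theorem \ref{< is trichotomous} for the trichotomy hypothesis is a small bit of extra care the paper leaves implicit, but otherwise the arguments are the same.
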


\begin{proof}
From the preceding theorem, $a<b$ implies $\varphi a<\varphi b$. Hence
$\varphi a$ has to $\varphi b$ the same relation (%
$<$%
, =, or
$>$%
) as $a$ has to $b$ and $\varphi$ is one-to-one by Theorem
\ref{order preserving functions}. And since $\varphi$ is a homomorphism and is
one-to-one, $\varphi$ is an embedding according to Definition
\ref{definition homomorphism}.
\end{proof}

\begin{theorem}
\label{identity homomorphism}The identity function $i_{M}$ in a magnitude
space $M$ is an automorphism.
\end{theorem}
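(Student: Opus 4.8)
The plan is to unpack the definition of automorphism (Definition \ref{definition endomorphism automorphism}): an automorphism is an endomorphism which is one-to-one and onto, and an endomorphism is just a homomorphism $M\rightarrow M$. So there are really only two things to check for $i_M$: that it respects $+$, and that it is a bijection.

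First I would verify the homomorphism property. For any $a,b\in M$, Definition \ref{definition identity function} gives $i_M(a+b)=a+b$, and it also gives $i_M a=a$ and $i_M b=b$, so $i_M a+i_M b=a+b$. Hence $i_M(a+b)=i_M a+i_M b$, which is exactly the condition in Definition \ref{definition homomorphism}. Since the domain and codomain are both $M$, this makes $i_M$ an endomorphism in the sense of Definition \ref{definition endomorphism automorphism}.

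Next I would handle the bijectivity. Rather than re-derive it, I would simply cite Theorem \ref{identity 1-1 and onto}, which already states that $i_S:S\rightarrow S$ is one-to-one and onto; applying it with $S=M$ gives that $i_M$ is one-to-one and onto as a map. Combining this with the previous paragraph, $i_M$ is an endomorphism that is one-to-one and onto, so it is an automorphism by Definition \ref{definition endomorphism automorphism}, which is the claim.

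I do not expect any genuine obstacle here; the statement is a direct bookkeeping consequence of earlier results, and the only thing to be careful about is quoting the right definitions in the right order (identity function $\Rightarrow$ homomorphism $\Rightarrow$ endomorphism, then Theorem \ref{identity 1-1 and onto} $\Rightarrow$ bijection, then automorphism).
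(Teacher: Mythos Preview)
Your proposal is correct and follows exactly the same route as the paper: verify $i_M(a+b)=a+b=i_Ma+i_Mb$ from Definition \ref{definition identity function} to get the homomorphism property, then invoke Theorem \ref{identity 1-1 and onto} for bijectivity, and conclude via Definition \ref{definition endomorphism automorphism}. The only difference is presentational---the paper compresses the homomorphism check into a single chain of equalities---so there is nothing to add.
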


\begin{proof}
For any $a,b\in M$, $i_{M}\left(  a+b\right)  =a+b=i_{M}a+i_{M}b$ by
Definition \ref{definition identity function}. Hence $i_{M}$ is a homomorphism
according to Definition \ref{definition homomorphism}. And $i_{M}$ is
one-to-one and onto by Theorem \ref{identity 1-1 and onto}. Therefore $i_{M}$
is an automorphism according to Definitions \ref{definition homomorphism} and
\ref{definition endomorphism automorphism}.
\end{proof}

\begin{theorem}
\label{sum of homomorphisms}The sum of two embeddings is an embedding.
\end{theorem}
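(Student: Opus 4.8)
The plan is to reduce everything to Theorem~\ref{morphisms 1-1}, which says that every homomorphism between magnitude spaces is automatically an embedding. So given two embeddings $\varphi:M\rightarrow M^{\prime}$ and $\chi:M\rightarrow M^{\prime}$, it suffices to verify that their sum $\varphi+\chi:M\rightarrow M^{\prime}$ is a homomorphism; the one-to-one conclusion then comes for free. Note that $\varphi+\chi$ is already a well-defined function into $M^{\prime}$ by Definition~\ref{definition sum of embeddings}, since $M^{\prime}$ is closed under $+$, so no separate check of that is needed.

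To check the homomorphism property, I would take $a,b\in M$ and compute $\left(\varphi+\chi\right)\left(a+b\right)$ by unwinding Definition~\ref{definition sum of embeddings} to get $\varphi\left(a+b\right)+\chi\left(a+b\right)$, then use the fact that $\varphi$ and $\chi$ are homomorphisms (Definition~\ref{definition homomorphism}) to rewrite this as $\left(\varphi a+\varphi b\right)+\left(\chi a+\chi b\right)$. The remaining task is purely algebraic: using associativity and commutativity from Definition~\ref{definition magnitude}, regroup the four terms as $\left(\varphi a+\chi a\right)+\left(\varphi b+\chi b\right)$, which is exactly $\left(\varphi+\chi\right)a+\left(\varphi+\chi\right)b$ by Definition~\ref{definition sum of embeddings} again. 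This shows $\varphi+\chi$ is a homomorphism, hence an embedding by Theorem~\ref{morphisms 1-1}.

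The only step requiring any care is the regrouping of $\left(\varphi a+\varphi b\right)+\left(\chi a+\chi b\right)$ into $\left(\varphi a+\chi a\right)+\left(\varphi b+\chi b\right)$, which is a short chain of applications of associativity and commutativity (commute $\varphi b$ past $\chi a$ in the middle, then reassociate). I would simply write out the two or three intermediate displays citing Definition~\ref{definition magnitude} at each step, in the same explicit style as the earlier theorems. There is no genuine obstacle: the substance of the theorem is entirely absorbed into the prior result that homomorphisms are embeddings together with the associativity and commutativity axioms.
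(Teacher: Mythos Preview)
Your proposal is correct and follows essentially the same approach as the paper: verify the homomorphism identity for $\varphi+\chi$ by unwinding Definition~\ref{definition sum of embeddings}, applying Definition~\ref{definition homomorphism} to each summand, regrouping via associativity and commutativity, and then invoking Theorem~\ref{morphisms 1-1} to conclude that the homomorphism is an embedding. The paper's proof is exactly this four-line computation with the same citations.
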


\begin{proof}
If $\varphi:M\rightarrow M^{\prime}$ and $\chi:M\rightarrow M^{\prime}$ are
embeddings and $a,b\in M$, then
\begin{align*}
\left(  \varphi+\chi\right)  \left(  a+b\right)   & =\varphi\left(
a+b\right)  +\chi\left(  a+b\right)  \text{ (Definition
\ref{definition sum of embeddings})}\\
& =\left(  \varphi a+\varphi b\right)  +\left(  \chi a+\chi b\right)  \text{
(Definition \ref{definition homomorphism})}\\
& =\left(  \varphi a+\chi a\right)  +\left(  \varphi b+\chi b\right)  \text{
(commutativity and associativity of +)}\\
& =\left(  \varphi+\chi\right)  a+\left(  \varphi+\chi\right)  b\text{.
(Definition \ref{definition sum of embeddings})}%
\end{align*}
Therefore $\varphi+\chi$ is an embedding by Definition
\ref{definition homomorphism} and Theorem \ref{morphisms 1-1}.
\end{proof}

\begin{theorem}
\label{composition of homomophisms}The composition of two embeddings is an embedding.
\end{theorem}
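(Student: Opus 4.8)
The plan is to show directly that the composition $\chi\varphi:M\rightarrow M^{\prime\prime}$ of two embeddings $\varphi:M\rightarrow M^{\prime}$ and $\chi:M^{\prime}\rightarrow M^{\prime\prime}$ is a homomorphism, and then invoke Theorem \ref{morphisms 1-1} to conclude it is an embedding. This mirrors the pattern used in the proof of Theorem \ref{sum of homomorphisms}, where the one-to-one part was likewise obtained for free from Theorem \ref{morphisms 1-1} rather than argued separately.

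First I would fix $a,b\in M$ and compute $\left(  \chi\varphi\right)  \left(  a+b\right)  $ by unwinding Definition \ref{definition composition}: $\left(  \chi\varphi\right)  \left(  a+b\right)  =\chi\left(  \varphi\left(  a+b\right)  \right)  $. Next I would use that $\varphi$ is a homomorphism (Definition \ref{definition homomorphism}) to rewrite $\varphi\left(  a+b\right)  =\varphi a+\varphi b$, then use that $\chi$ is a homomorphism to push $\chi$ across the sum, obtaining $\chi\left(  \varphi a\right)  +\chi\left(  \varphi b\right)  $. Finally I would fold each term back up with Definition \ref{definition composition} to get $\left(  \chi\varphi\right)  a+\left(  \chi\varphi\right)  b$. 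The chain of equalities then shows $\left(  \chi\varphi\right)  \left(  a+b\right)  =\left(  \chi\varphi\right)  a+\left(  \chi\varphi\right)  b$, so $\chi\varphi$ is a homomorphism by Definition \ref{definition homomorphism}.

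To finish, I would note that a homomorphism between magnitude spaces is automatically one-to-one, in fact an embedding, by Theorem \ref{morphisms 1-1}; applying this to $\chi\varphi$ gives the conclusion. There is essentially no obstacle here: the whole argument is a four-line display of equalities citing Definitions \ref{definition composition} and \ref{definition homomorphism}, followed by a single appeal to Theorem \ref{morphisms 1-1}. The only point requiring the slightest care is making sure the composition $\chi\varphi$ is well-typed as a map $M\rightarrow M^{\prime\prime}$, which is immediate from Definition \ref{definition composition} since the codomain $M^{\prime}$ of $\varphi$ matches the domain of $\chi$.
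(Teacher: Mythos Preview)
Your proposal is correct and matches the paper's proof essentially line for line: the same four-step display using Definitions \ref{definition composition} and \ref{definition homomorphism}, followed by the appeal to Theorem \ref{morphisms 1-1}. There is nothing to add.
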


\begin{proof}
If $\varphi:M\rightarrow M^{\prime}$ and $\chi:M^{\prime}\rightarrow
M^{\prime\prime}$ are embeddings and $a,b\in M$, then%
\begin{align*}
\left(  \chi\varphi\right)  \left(  a+b\right)   & =\chi\left(  \varphi\left(
a+b\right)  \right)  \text{ (Definition \ref{definition composition})}\\
& =\chi\left(  \varphi a+\varphi b\right)  \text{ (Definition
\ref{definition homomorphism})}\\
& =\chi\left(  \varphi a\right)  +\chi\left(  \varphi b\right)  \text{
(Definition \ref{definition homomorphism})}\\
& =\left(  \chi\varphi\right)  a+\left(  \chi\varphi\right)  b\text{.
(Definition \ref{definition composition})}%
\end{align*}
Therefore $\varphi\chi$ is an embedding by Definition
\ref{definition homomorphism} and Theorem \ref{morphisms 1-1}.
\end{proof}

\section{Classification of Magnitude Spaces}

\begin{definition}
\label{definition <= and >=}Let $<$ be a strict linear order with inverse $>$.
By $a\leq b$ we shall mean $a<b$ or $a=b$. By $a\geq b$ we shall mean $a>b$ or
$a=b$.
\end{definition}

\begin{definition}
\label{definition bounds}Let $S$ be a set with a strict linear order $<$ and
let $A$ be a nonempty subset of $S$. We say $b\in S$ is a \textbf{lower bound}
of $A$ if $b\leq a$ for every $a\in A$. We say that $b$ is a \textbf{smallest}
or \textbf{least} element of $A$ if $b$ is a lower bound of $A$ and $b\in A$.
We say $b\in S$ is an \textbf{upper bound} of $A$ if $a\leq b$ for every $a\in
A$. We say that $b$ is a \textbf{largest} or \textbf{greatest} element of $A$
if $b$ is an upper bound of $A$ and $b\in A$.
\end{definition}

\begin{definition}
\label{definition discrete}A magnitude space is \textbf{discrete} if it has a
smallest element; otherwise it is \textbf{nondiscrete}.
\end{definition}

\begin{definition}
\label{definition well ordered}A magnitude space $M$ is \textbf{well ordered}
if every nonempty subset of $M$ has a smallest element.
\end{definition}

\begin{definition}
\label{definition complete}A magnitude space is \textbf{complete} if every
nonempty subset with an upper bound has a least upper bound or, in other
words, the set of upper bounds has a least element.
\end{definition}

\begin{definition}
\label{definition continuous}A magnitude space is \textbf{continuous} if it is
complete and nondiscrete.
\end{definition}

\begin{definition}
[H\"{o}lder]\label{definition Archimedean}A magnitude space $M$ is
\textbf{Archimedean} if for any element $a\in M$ and any nonempty subset $A$
with an upper bound, there is some element $\zeta\in M$ such that $\zeta\in A$
and $\zeta+a\notin A$.
\end{definition}

\begin{remark}
The property of being Archimedean is usually defined in terms of integral
multiples.\cite{Holder}
\end{remark}

\begin{theorem}
\label{Transitivity mixed}If $a\leq b$ and $b<c$, then $a<c$. And if $a<b$ and
$b\leq c$, then $a<c$.
\end{theorem}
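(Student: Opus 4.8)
The plan is to argue by cases on the disjunction hidden in the $\leq$ symbol, using Definition \ref{definition <= and >=} together with Theorem \ref{Transitivity of >} (transitivity of $<$).

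For the first claim, suppose $a \leq b$ and $b < c$. By Definition \ref{definition <= and >=}, either $a < b$ or $a = b$. In the first case, $a < b$ and $b < c$ give $a < c$ by Theorem \ref{Transitivity of >}. In the second case, $a = b$, so from $b < c$ we immediately get $a < c$ by substituting equals for equals. Either way $a < c$.

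For the second claim, suppose $a < b$ and $b \leq c$. Again by Definition \ref{definition <= and >=}, either $b < c$ or $b = c$. If $b < c$, then $a < b$ and $b < c$ give $a < c$ by Theorem \ref{Transitivity of >}. If $b = c$, then from $a < b$ we get $a < c$ by substitution. Either way $a < c$.

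I do not anticipate any genuine obstacle here: the statement is a routine case split, and the only subtlety is being careful to treat the equality case by substitution rather than by invoking transitivity (which is stated only for the strict relation). No new machinery beyond Definition \ref{definition <= and >=} and Theorem \ref{Transitivity of >} is needed.
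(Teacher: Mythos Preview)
Your argument is correct and mirrors the paper's own proof almost verbatim: both split on the disjunction in $\leq$ via Definition~\ref{definition <= and >=}, invoke Theorem~\ref{Transitivity of >} for the strict case, and handle the equality case by substitution. The only difference is that the paper dismisses the second claim with ``similar reasoning'' while you write it out.
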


\begin{proof}
Assume $a\leq b$ and $b<c$. Then $a<b$ or $a=b$ by Definition
\ref{definition <= and >=}. If $a<b$ then since also $b<c$, $a<c$ by Theorem
\ref{Transitivity of >}. If $a=b$ then since also $b<c$, $a<c$. Thus in both
cases $a<c$. The second part of the theorem follows by similar reasoning.
\end{proof}

\begin{theorem}
\label{element greater than upper bound}If an element is greater than an upper
bound of a set, then it is an upper bound of the set but not an element of the set.
\end{theorem}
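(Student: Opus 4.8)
The plan is to unpack the definitions and then lean on the mixed transitivity result (Theorem \ref{Transitivity mixed}) for the first assertion and on trichotomy for the second. Let $S$ be a set with a strict linear order $<$ (and inverse $>$), let $A$ be a nonempty subset of $S$, let $b\in S$ be an upper bound of $A$, and let $c\in S$ with $c>b$, equivalently $b<c$.

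First I would verify that $c$ is an upper bound of $A$. Fix any $a\in A$. Since $b$ is an upper bound of $A$, we have $a\leq b$ by Definition \ref{definition bounds}. Combining $a\leq b$ with $b<c$, Theorem \ref{Transitivity mixed} gives $a<c$, hence $a\leq c$ by Definition \ref{definition <= and >=}. As $a\in A$ was arbitrary, $c$ is an upper bound of $A$ by Definition \ref{definition bounds}.

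Next I would show $c\notin A$, arguing by contradiction. Suppose $c\in A$. Then, because $b$ is an upper bound of $A$, we would have $c\leq b$, i.e. $c<b$ or $c=b$ by Definition \ref{definition <= and >=}. But by hypothesis $b<c$, and trichotomy (the order $<$ being trichotomous) forbids $b<c$ from coexisting with either $c<b$ or $c=b$. This contradiction shows $c\notin A$, completing the proof.

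There is no real obstacle here; the only thing to be careful about is invoking the correct transitivity statement (the $\leq$/$<$ mixed version, Theorem \ref{Transitivity mixed}, rather than Theorem \ref{Transitivity of >}) and citing trichotomy for the disjointness of the three cases when ruling out $c\in A$.
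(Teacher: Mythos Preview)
Your proof is correct and follows essentially the same route as the paper's own argument: both use Definition \ref{definition bounds} together with the mixed transitivity of Theorem \ref{Transitivity mixed} to show the larger element is an upper bound, and then derive a contradiction with trichotomy to rule out membership in $A$. Apart from a relabeling of variables, the two proofs are the same.
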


\begin{proof}
Let $S$ be a linearly ordered set, $A$ a nonempty subset of $S$, $a$ an upper
bound of $A$, and $a<b$. I say $b$ is an upper bound of $A$ and $b\notin A$.

If $c\in A$, then $c\leq a$ by Definition \ref{definition bounds}. And $a<b$
by assumption. Thus $c\leq b$ by Theorem \ref{Transitivity mixed} and
Definition \ref{definition <= and >=}. We have now shown that if $c\in A$,
then $c\leq b$. Therefore $b$ is an upper bound of $A$.

Now suppose $b\in A$. Then $b\leq a$ by Definition \ref{definition bounds}$.$
But this contradicts the assumption $a<b$ by Definition
\ref{definition <= and >=} and Theorem \ref{< is trichotomous}. Therefore
$b\notin A$.
\end{proof}

\begin{theorem}
\label{element less than lower bound}If an element is less than a lower bound
of a set, then it is a lower bound of the set but not an element of the set.
\end{theorem}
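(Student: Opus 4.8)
The plan is to mirror the proof of Theorem \ref{element greater than upper bound} verbatim, swapping the roles of ``upper bound'' and ``lower bound'' and reversing every inequality. First I would fix the data: let $S$ be a linearly ordered set, $A$ a nonempty subset of $S$, $a$ a lower bound of $A$, and $b$ an element with $b<a$. The claim to establish is that $b$ is a lower bound of $A$ and that $b\notin A$.

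For the first part, I would take an arbitrary $c\in A$. Since $a$ is a lower bound of $A$, Definition \ref{definition bounds} gives $a\leq c$. Combining this with the hypothesis $b<a$, Theorem \ref{Transitivity mixed} together with Definition \ref{definition <= and >=} yields $b\leq c$. As $c\in A$ was arbitrary, $b$ is a lower bound of $A$ by Definition \ref{definition bounds}.

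For the second part, I would argue by contradiction: suppose $b\in A$. Then, since $a$ is a lower bound of $A$, Definition \ref{definition bounds} gives $a\leq b$, i.e.\ $a<b$ or $a=b$; either alternative contradicts the hypothesis $b<a$ by trichotomy (Theorem \ref{< is trichotomous}, via Definition \ref{definition <= and >=}). Hence $b\notin A$.

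I do not expect any real obstacle here; the argument is a direct dual of the preceding theorem and uses only transitivity and trichotomy of the order. The one point worth a moment's care is that the order on the abstract set $S$ is merely assumed linear, so strictly speaking the trichotomy appeal should invoke Definition \ref{definition trichotomous}; to stay consistent with the way Theorem \ref{element greater than upper bound} is written I would cite Theorem \ref{< is trichotomous} in the same spirit.
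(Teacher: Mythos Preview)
Your proposal is correct and is precisely the dualization the paper intends: the paper's own proof reads in its entirety ``Similar to proof of previous theorem.'' Your write-up simply spells out that dual argument, with the same citations and structure.
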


\begin{proof}
Similar to proof of previous theorem.
\end{proof}

\begin{theorem}
\label{discrete gap}If $M$ is a discrete magnitude space with smallest element
$a$, and $b\in M$, then there is no $c\in M$ such that $b<c<b+a$.
\end{theorem}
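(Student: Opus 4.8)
The plan is to argue by contradiction. Suppose there were some $c\in M$ with $b<c<b+a$. From $b<c$ and Definition \ref{definition subtraction}, the difference $c-b$ exists and satisfies $b+\left(c-b\right)=c$. I would then invoke the hypothesis that $a$ is the smallest element of $M$: since $c-b\in M$, Definition \ref{definition bounds} gives $a\leq c-b$.

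Next I would transport this inequality across addition of $b$. By Theorem \ref{equals added to equals or unequals}, $b+a$ has to $b+\left(c-b\right)$ the same relation ($<$, $=$, or $>$) as $a$ has to $c-b$, so from $a\leq c-b$ we obtain $b+a\leq b+\left(c-b\right)=c$. Combined with the standing assumption $c<b+a$, Theorem \ref{Transitivity mixed} then yields $b+a<b+a$, which is impossible by Theorem \ref{< is trichotomous}. Hence no such $c$ exists.

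I do not expect a real obstacle here; the work is short and mechanical. The one point that needs a little care is that "smallest element" delivers $a\leq c-b$ rather than the strict $a<c-b$, so the step moving the inequality past the $+b$ should use the mixed-relation monotonicity of Theorem \ref{equals added to equals or unequals} (which simultaneously covers $<$, $=$, and $>$), not merely the strict translation invariance of Theorem \ref{Translation by a}. Everything else is an application of trichotomy and the definitions of subtraction and of a lower bound.
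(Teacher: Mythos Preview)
Your proof is correct and follows essentially the same contradiction strategy as the paper: both arguments hinge on comparing $c-b$ with the smallest element $a$. The paper reaches the contradiction slightly differently, using the difference decomposition in Theorem \ref{Transitivity of >} to write $a=(b+a)-b=\bigl((b+a)-c\bigr)+(c-b)$ and hence $c-b<a$, whereas you start from $a\leq c-b$ and translate by $b$ via Theorem \ref{equals added to equals or unequals}; these are minor mechanical variations of the same idea.
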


\begin{proof}
Suppose there is such a $c$. Then $\left(  b+a\right)  -b=\left(  \left(
b+a\right)  -c\right)  +\left(  c-b\right)  $ by Theorem
\ref{Transitivity of >}. But $\left(  b+a\right)  -b=a$ by Definition
\ref{definition subtraction}. Hence $c-b<a$ by Definition \ref{definition <}.
But $a$ is the smallest element of $M$ by assumption and so we have a contradiction.
\end{proof}

\begin{theorem}
\label{well ordered implies discrete and complete}A well ordered magnitude
space is discrete and complete.
\end{theorem}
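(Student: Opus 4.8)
The plan is to verify the two assertions separately, each directly from the relevant definition; both turn out to be immediate consequences of the hypothesis that every nonempty subset of $M$ has a smallest element (Definition \ref{definition well ordered}).

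First I would establish discreteness. Apply Definition \ref{definition well ordered} to the nonempty subset $M\subseteq M$ itself: it has a smallest element. Hence $M$ is discrete according to Definition \ref{definition discrete}. This step is essentially a one-liner.

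Next I would establish completeness. Let $A$ be an arbitrary nonempty subset of $M$ possessing an upper bound, and let $U$ be the set of all upper bounds of $A$. Because $A$ has an upper bound, $U$ is nonempty, so by Definition \ref{definition well ordered} it has a smallest element $u$. By Definition \ref{definition bounds}, a smallest element of $U$ is a lower bound of $U$ that lies in $U$; that is, $u$ is a least element of the set of upper bounds of $A$, i.e.\ a least upper bound. Thus every nonempty subset of $M$ with an upper bound has a least upper bound, and $M$ is complete by Definition \ref{definition complete}. Combining the two parts gives that $M$ is discrete and complete, as claimed.

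I do not expect any genuine obstacle here; the argument is a direct unwinding of definitions. The only point meriting a moment's care is recognizing that the hypothesis "$A$ has an upper bound" is precisely what makes the set $U$ of upper bounds nonempty, so that the well-ordering property is applicable to $U$.
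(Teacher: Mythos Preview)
Your proposal is correct and follows essentially the same approach as the paper: apply well-ordering to $M$ itself to get discreteness, and apply well-ordering to the (nonempty) set of upper bounds of $A$ to get completeness. The paper's proof is nearly word-for-word the same, only slightly more terse.
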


\begin{proof}
Let $M$ be a well ordered magnitude space. Since $M$ is a subset of $M$, $M$
has a smallest element by Definition \ref{definition well ordered}. Therefore
$M$ is discrete according to Definition \ref{definition discrete}.

Now let $A$ be any nonempty subset of $M$ with an upper bound. If $B$ is the
set of upper bounds of $A$, then $B$ is a nonempty subset of $M$ and hence $B
$ has a least element by Definition \ref{definition well ordered}. Therefore
$M$ is complete according to Definition \ref{definition complete}.
\end{proof}

\begin{theorem}
[H\"{o}lder]\label{Holder}A complete magnitude space is Archimedean.
\end{theorem}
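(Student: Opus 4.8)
The plan is to argue by contradiction. Suppose $M$ is complete but not Archimedean, so there is some $a \in M$ and some nonempty subset $A \subseteq M$ with an upper bound such that for every $\zeta \in M$, whenever $\zeta \in A$ we also have $\zeta + a \in A$. Since $A$ is nonempty and bounded above and $M$ is complete, let $s$ be the least upper bound of $A$ (Definition \ref{definition complete}). The idea is that the failure of the Archimedean property lets us "push $A$ up by $a$" indefinitely, which should force the upper bound $s$ to be pushable as well, contradicting its minimality. Concretely, I want to produce an upper bound of $A$ strictly less than $s$.

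First I would try to show that $s - a$ makes sense and is an upper bound of $A$, which would immediately contradict $s$ being the \emph{least} upper bound (since $s - a < s$ by Theorem \ref{b-a < a}, once we know $a < s$). To get $s - a < s$ I first need $a < s$: pick any $x \in A$; then by the non-Archimedean hypothesis $x + a \in A$, so $x + a \le s$, and since $x < x + a$ we get... hmm, that gives $x < s$, not directly $a < s$. A cleaner route: since $x + a \le s$ and $x + a > a$ (as $x + a = a + x$), transitivity (Theorem \ref{Transitivity mixed}) gives $a < s$, so $s - a$ is defined. Now I claim $s - a$ is an upper bound of $A$. Suppose not; then there is $y \in A$ with $s - a < y$, i.e. $y \in A$ and $s - a < y$. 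By the non-Archimedean hypothesis, $y + a \in A$, hence $y + a \le s$. But $s - a < y$ implies $a + (s-a) < a + y$ by Theorem \ref{Translation by a} (and commutativity), i.e. $s < y + a$; combined with $y + a \le s$ this contradicts trichotomy (Theorem \ref{< is trichotomous}) via Theorem \ref{Transitivity mixed}. Therefore $s - a$ is an upper bound of $A$ with $s - a < s$, contradicting that $s$ is the least upper bound.

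The main obstacle I anticipate is the bookkeeping around subtraction: the expression $s - a$ is only legitimate once $a < s$ is established (Definition \ref{definition subtraction}), and the inequality manipulation "$s - a < y \implies s < y + a$" must be routed carefully through Theorem \ref{Translation by a} or Theorem \ref{moving b accross <=>} rather than treated as ordinary algebra, since the magnitude space has no zero and subtraction is only partially defined. I should also double-check that the degenerate possibility does not arise where $A$ has a greatest element equal to its least upper bound — but that case is handled uniformly by the argument above, since the non-Archimedean hypothesis applied to that greatest element would already eject an element larger than $s - a$ into $A$. So the only real care needed is to keep every subtraction guarded by a verified strict inequality and to cite Theorem \ref{Transitivity mixed} for the $\le$/$<$ mixed steps. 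Once those citations are in place, the contradiction with the minimality of the least upper bound closes the proof.
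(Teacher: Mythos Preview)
Your proof is correct and rests on the same idea as the paper's: take the least upper bound $s$ of $A$ and exploit $s-a$. The paper, however, argues directly rather than by contradiction, and so must split into two cases according to whether $\zeta'\le a$ or $a<\zeta'$, since $\zeta'-a$ is defined only in the second case; in the first case it just picks any $\zeta\in A$ and notes $\zeta+a>a\ge\zeta'$. Your contradiction framing earns $a<s$ for free from the closure hypothesis $x\in A\Rightarrow x+a\in A$ (pick $x\in A$, then $a<x+a\le s$), so you sidestep that case split. After that the two arguments coincide up to contrapositive: the paper says $s-a$ is \emph{not} an upper bound (being below the least one), hence some $\zeta\in A$ satisfies $\zeta>s-a$ and therefore $\zeta+a>s$, giving $\zeta+a\notin A$; you show that if $s-a$ were not an upper bound the same $\zeta$ would, by closure, put $\zeta+a\in A$ with $\zeta+a>s$, a contradiction, so $s-a$ \emph{is} an upper bound below $s$, contradicting minimality.
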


\begin{proof}
Let $M$ be a complete magnitude space, $a\in M$, and $A$ a nonempty subset of
$M$ with an upper bound. Since $M$ is complete, $A$ has a least upper bound
$\zeta^{\prime}$ by Definition \ref{definition complete}.

Case 1: $\zeta^{\prime}\leq a$. Since Let $\zeta$ be any element of $A$. Then
$a<\zeta+a$ by Definition \ref{definition <} and so $\zeta^{\prime}<\zeta+a$
by Theorem \ref{Transitivity mixed}$.$ But then $\zeta+a$ is greater than an
upper bound of $A$ and hence $\zeta+a\notin A$ by Theorem
\ref{element greater than upper bound}.

Case 2: $a<\zeta^{\prime}$. Since $\zeta^{\prime}-a<\zeta^{\prime}$ by Theorem
\ref{b-a < a}, $\zeta^{\prime}-a$ is less than the least upper bound of $A$
and hence is not an upper bound of $A$ by Theorem
\ref{element less than lower bound}. Thus there is some $\zeta\in A$ such that
$\zeta^{\prime}-a<\zeta$ by Definition \ref{definition bounds}. And
$\zeta^{\prime}<\zeta+a$ by Theorem \ref{moving b accross <=>} and so
$\zeta+a$ is greater than an upper bound of $A$ and hence $\zeta+a\notin A$ by
Theorem \ref{element greater than upper bound}.

We have shown, in both cases, that there is some $\zeta\in M$ such that
$\zeta\in A$ and $\zeta+a\notin A$. Therefore $M$ is Archimedean according to
Definition \ref{definition Archimedean}.
\end{proof}

\begin{theorem}
A discrete Archimedean magnitude space is well ordered.
\end{theorem}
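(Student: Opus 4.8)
The plan is to take an arbitrary nonempty subset $B$ of $M$ and exhibit its least element directly, by applying the Archimedean property to the \emph{smallest element of $M$} together with the set of lower bounds of $B$. First I would fix the smallest element $\epsilon$ of $M$, which exists by Definition \ref{definition discrete}, and let $L$ be the set of all lower bounds of $B$. Two preliminary observations make $L$ a legitimate input to the Archimedean property: $L$ is nonempty, because $\epsilon$ is a lower bound of all of $M$ and hence of $B\subseteq M$; and $L$ has an upper bound, because any element of the nonempty set $B$ dominates every lower bound of $B$.

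Next I would invoke Definition \ref{definition Archimedean} with the element $\epsilon$ and the set $L$: there is some $\zeta\in M$ with $\zeta\in L$ and $\zeta+\epsilon\notin L$. Thus $\zeta$ is a lower bound of $B$, and it remains only to show $\zeta\in B$; once that is established, $\zeta$ is a least element of $B$ by Definition \ref{definition bounds}, and since $B$ was an arbitrary nonempty subset, $M$ is well ordered by Definition \ref{definition well ordered}.

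The crux of the argument — and the step I expect to be the only real obstacle — is showing $\zeta\in B$. I would argue by contradiction: assume $\zeta\notin B$. Since $\zeta+\epsilon$ is \emph{not} a lower bound of $B$, there is some $c\in B$ with $c<\zeta+\epsilon$, using trichotomy (Theorem \ref{< is trichotomous}) to convert ``$\zeta+\epsilon\not\le c$'' into ``$c<\zeta+\epsilon$''. On the other hand $\zeta$ \emph{is} a lower bound of $B$ and $\zeta\ne c$ (since $\zeta\notin B$ but $c\in B$), so $\zeta<c$. Hence $\zeta<c<\zeta+\epsilon$, which is impossible by Theorem \ref{discrete gap} applied with $b=\zeta$ and smallest element $\epsilon$. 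This contradiction forces $\zeta\in B$. The points requiring care are the bookkeeping between $\le$ and $<$ in translating ``not a lower bound,'' and making sure the Archimedean hypothesis is fed a set that is genuinely nonempty and bounded above — both of which rest on $B$ being nonempty and on $\epsilon$ being a lower bound of the whole space.
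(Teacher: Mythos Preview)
Your proposal is correct and follows essentially the same route as the paper: form the set of lower bounds of the given nonempty subset, feed it (together with the smallest element of $M$) to the Archimedean property, and use Theorem~\ref{discrete gap} to force the resulting $\zeta$ to actually lie in the subset. The only cosmetic difference is that you phrase the final step as a proof by contradiction while the paper argues directly that $\zeta\le b$ together with the impossibility of $\zeta<b<\zeta+\epsilon$ forces $\zeta=b$.
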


\begin{proof}
Assume $M$ is a discrete Archimedean magnitude space with smallest element $a
$ and $A$ is a nonempty subset of $M$. I say $A$ has a smallest element.

Let $B$ be the set of all lower bounds of $A$. If $b\in A$, then $a\leq b$ by
Definition \ref{definition bounds}. Therefore $a\in B$ and so $B$ is nonempty.
Now let $c$ be any fixed element of $A$. If $b\in B$, then $b\leq c $. Thus
$c$ is an upper bound of $B$ according to Definition \ref{definition bounds}
and so $B$ is a nonempty subset of $M$ with an upper bound.

But $M$ is Archimedean and hence there is some element $\zeta\in M$ such that
$\zeta\in B$ and $\zeta+a\notin B$ by Definition \ref{definition Archimedean}.
Or, in other words, $\zeta$ is a lower bound of $A$ and $\zeta+a$ is not a
lower bound of $A$. And because $\zeta+a$ is not a lower bound of $A$, there
is some $b\in A$ such that $b<\zeta+a$ by Definitions \ref{definition bounds}
and \ref{definition <= and >=} and trichotomy. And since $\zeta$ is a lower
bound of $A$, $\zeta\leq b$ by Definition \ref{definition bounds}. But
$\zeta<b$ and $b<\zeta+a$ is impossible by Theorem \ref{discrete gap}. Thus
$\zeta=b$ by Definition \ref{definition <= and >=}. Therefore $\zeta\in A$ and
$\zeta$ is a lower bound of $A$ and hence $\zeta$ is the smallest element of
$A$ according to Definition \ref{definition bounds}.
\end{proof}

\section{Well Ordered Magnitude Spaces}

We next prove a form of mathematical induction for well ordered magnitude spaces.

\begin{theorem}
\label{induction}If $M$ is a well ordered magnitude space with smallest
element $a$, and $A$ is subset of $M$ containing $a$ such that $c\in A$
implies $c+a\in A$, then $A=M$.
\end{theorem}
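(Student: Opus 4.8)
The plan is to argue by contradiction using the well-ordering hypothesis applied to the complement of $A$. So I would suppose $A \neq M$ and set $B = M \setminus A$, which is then a nonempty subset of $M$. By Definition \ref{definition well ordered}, $B$ has a smallest element, call it $b$.

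First I would locate $b$ strictly above the smallest element $a$: since $a\in A$ we have $b\neq a$, and since $a$ is the smallest element of $M$ we have $a\leq b$, so trichotomy (Theorem \ref{< is trichotomous}) forces $a<b$. This makes $b-a$ a well-defined element of $M$ by Definition \ref{definition subtraction}. Next I would invoke Theorem \ref{b-a < a} to conclude $b-a<b$. Because $b$ is the \emph{smallest} element of $B$, no element of $M$ strictly below $b$ can lie in $B$; hence $b-a\notin B$, i.e. $b-a\in A$.

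Now I would apply the inductive hypothesis on $A$: since $b-a\in A$, we get $(b-a)+a\in A$. But $(b-a)+a=b$ by the remark following Definition \ref{definition subtraction} (the defining property of subtraction, together with commutativity). Thus $b\in A$, contradicting $b\in B=M\setminus A$. Therefore $B$ is empty and $A=M$.

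The only delicate point is the step asserting $b-a\in A$: it relies on simultaneously knowing that $b-a$ is a genuine element of the magnitude space (so that it is eligible for membership in $A$ or $B$) and that $b-a<b$ (so that minimality of $b$ in $B$ excludes it from $B$). Both facts are already available — the former from Definition \ref{definition subtraction}, the latter from Theorem \ref{b-a < a} — so no real obstacle remains; the argument is a direct transcription of the classical "least criminal" proof of induction into this setting.
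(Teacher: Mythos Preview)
Your argument is correct and is essentially identical to the paper's own proof: both assume $A\neq M$, take the least element $b$ of $B=M\setminus A$, show $a<b$, conclude $b-a\in A$ from $b-a<b$, and derive the contradiction $b=(b-a)+a\in A$. The only cosmetic difference is that the paper cites Theorem \ref{element less than lower bound} for the step $b-a\notin B$, whereas you appeal directly to minimality of $b$.
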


\begin{proof}
Suppose $A\neq M$. Then the set $B$ consisting of those elements in $M$ which
are not in $A$ is nonempty and hence has a smallest element by Definition
\ref{definition well ordered}. Let $b$ be the smallest element of $B$. Then
$b$ is a lower bound of $B$ and $b\in B$ by Definition \ref{definition bounds}%
. Now $a$ is the smallest element of $M$ by assumption and hence $a\leq b$ by
Definition \ref{definition bounds}. And $b$ is not equal to $a$ since $a\in A$
by assumption and $b\in B$, a set having no element in $A$. And from $a\leq b$
and $a\neq b$ follows $a<b$ by Definition \ref{definition <= and >=}. But then
$b-a<b$ by Theorem \ref{b-a < a}$.$ Thus $b-a$ is less than a lower bound of
$B$ and hence $b-a\notin B$ by Theorem \ref{element less than lower bound}.
Hence $b-a\in A$. But, by assumption, $b-a\in A$ implies $\left(  b-a\right)
+a\in A$. And $\left(  b-a\right)  +a=b$ by Definition
\ref{definition subtraction}. Thus $b$ is an element of $A$ and of $B$ which
is impossible.
\end{proof}

In the following theorem an embedding of a well ordered magnitude space into
an arbitrary magnitude space is constructed inductively. The general approach
is that of Dedekind.\cite{Dedekind}

\begin{theorem}
If $M$ is a well ordered magnitude space with smallest element $a$,
$M^{\prime}$ is an arbitrary magnitude space, and $a^{\prime}\in M^{\prime} $,
then there exists a unique function $\varphi:M\rightarrow M^{\prime}$ such that

(i) $\varphi a=a^{\prime}$ and

(ii) $\varphi b=\varphi\left(  b-a\right)  +a^{\prime}$ for all $b>a$.
\end{theorem}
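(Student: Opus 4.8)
The plan is to use the induction principle (Theorem~\ref{induction}) twice: once to establish uniqueness, and once to establish existence by verifying that the recursion actually produces a well-defined function on all of $M$. The recursion (i)--(ii) is a valid definition by recursion on a well ordered set, but since the paper does not assume a general recursion theorem, one must build the function explicitly from its ``partial'' approximations and then invoke Theorem~\ref{induction} to glue them together.

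First I would handle uniqueness, which is the easy direction. Suppose $\varphi$ and $\psi$ both satisfy (i) and (ii). Let $A=\{b\in M:\varphi b=\psi b\}$. Then $a\in A$ by (i). Now suppose $c\in A$; I claim $c+a\in A$. Since $c\geq a$ (as $a$ is the smallest element) we have $c+a>a$, so (ii) applies to both functions: $\varphi(c+a)=\varphi((c+a)-a)+a^{\prime}=\varphi c+a^{\prime}$, using that $(c+a)-a=c$ by the remark following Definition~\ref{definition subtraction}, and similarly $\psi(c+a)=\psi c+a^{\prime}$. Since $\varphi c=\psi c$, we get $\varphi(c+a)=\psi(c+a)$, i.e.\ $c+a\in A$. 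By Theorem~\ref{induction}, $A=M$, so $\varphi=\psi$ by Definition~\ref{definition equal functions}.

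For existence, I would build $\varphi$ by a set-of-pairs argument. Call a function $f$ defined on an ``initial segment'' $S_b=\{x\in M:x\leq b\}$ \emph{admissible} if $f a=a^{\prime}$ and $f x=f(x-a)+a^{\prime}$ for all $x\in S_b$ with $x>a$. One first shows by induction (on $b$, using Theorem~\ref{induction} applied to the set of $b$ for which an admissible $f$ on $S_b$ exists and is unique) that for each $b\in M$ there is exactly one admissible function on $S_b$; the uniqueness piece is the argument of the previous paragraph restricted to $S_b$, and the existence-of-the-next-step piece uses that every $b>a$ has the predecessor $b-a<b$ by Theorem~\ref{b-a < a} so that the value $f(b-a)+a^{\prime}$ is already determined. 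Since these admissible functions agree on overlaps (by their uniqueness), their union $\varphi$ is a well-defined function $M\to M^{\prime}$, and it satisfies (i) and (ii) by construction.

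The main obstacle is the existence half: one must be careful that ``define $\varphi b = \varphi(b-a)+a^{\prime}$'' is not circular. The resolution is exactly that $b-a<b$ strictly (Theorem~\ref{b-a < a}), so the recursion is well-founded, and the well-ordering of $M$ is what licenses turning this into an honest inductive construction via Theorem~\ref{induction}. Everything else --- the uniqueness argument and the verification that the glued function satisfies (i)--(ii) --- is routine manipulation with Definition~\ref{definition subtraction} and the fact that $a$ is least.
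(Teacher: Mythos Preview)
Your proposal is correct and follows essentially the same route as the paper. The existence argument---building admissible partial functions on initial segments $S_b=\{x\leq b\}$, using Theorem~\ref{induction} to show one exists for every $b$, and then gluing---is exactly what the paper does (the paper writes $M_b$ for your $S_b$ and sets $\varphi b=\varphi_b b$ rather than taking a union, but this is cosmetic). The only stylistic difference is in the uniqueness half: you invoke Theorem~\ref{induction} on the agreement set $\{b:\varphi b=\psi b\}$, whereas the paper argues directly by least counterexample (pick the smallest $b$ with $\varphi b\neq\psi b$, observe $b>a$, and derive a contradiction from $\varphi(b-a)=\psi(b-a)$); these are the two standard equivalent formulations of induction over a well order.
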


\begin{proof}
First, there can be at most one function satisfying the two conditions above.
For if there are two distinct functions $\varphi$ and $\psi$ each satisfying
the two conditions, then there must be a smallest $b\in M$ for which $\varphi
b\neq\psi b$ by Definition \ref{definition well ordered}. Now $\varphi
a=a^{\prime}=\psi a$ since each of $\varphi$ and $\psi$ are assumed to have
property (i) above. Thus $a\neq b$ since $\varphi b\neq\psi b$. And $a$ is the
smallest element of $M$ by assumption and hence $a\leq b$ by Definition
\ref{definition bounds}. And from $a\neq b$ and $a\leq b$ follows $a<b$ by
Definition \ref{definition <= and >=}. Hence $b-a<b$ by Theorem \ref{b-a < a}.
And since $b$ is the smallest element of $M$ such that $\varphi b\neq\psi b$,
$\varphi\left(  b-a\right)  =\psi\left(  b-a\right)  $. Therefore%
\begin{align*}
\varphi b  & =\varphi\left(  b-a\right)  +a^{\prime}\text{ (property (ii)
above)}\\
& =\psi\left(  b-a\right)  +a^{\prime}\text{ (Theorem
\ref{equals added to equals or unequals})}\\
& =\psi b\text{ (property (ii) above)}%
\end{align*}
which is a contradiction.

It remains to show that there exists a function $\varphi$ with the specified
properties. To this end, for each $b\in M$ let $M_{b}$ be the set of those
elements in $M$ which are less than or equal to $b$. I say that for each $b\in
M$ there is a unique function $\varphi_{b}:M_{b}\rightarrow M^{\prime}$ such that

(i) $\varphi_{b}a=a^{\prime}$ and

(ii) $\varphi_{b}c=\varphi_{b}\left(  c-a\right)  +a^{\prime}$ for $a<c\leq b$.

That there cannot be two distinct functions with these properties for a given
$b\in M$ can be shown by the same argument as given in the beginning of the
proof. To prove the existence of one such function for each $b\in M$, let $A$
be the set of all elements $b$ in $M$ for which there is a unique function
$\varphi_{b}:M_{b}\rightarrow M^{\prime}$ as described above. In the case of
$b=a$, $M_{a}=\left\{  a\right\}  $ and the function $\varphi_{a}%
:M_{a}\rightarrow M^{\prime}$ defined by $\varphi_{a}a=a^{\prime}$ has the
required properties. Thus $a\in A$. Now suppose $b\in A$. We can then define a
function $\varphi_{b+a}:M_{b+a}\rightarrow M^{\prime}$ in terms of the unique
function $\varphi_{b}:M_{b}\rightarrow M^{\prime}$ according to
\[
\varphi_{b+a}c=\left\{
\begin{array}
[c]{rrr}%
\varphi_{b}c & \text{if} & c\leq b\\
\varphi_{b}b+a^{\prime} & \text{if} & c=b+a
\end{array}
\right.  .
\]
And since $b\in A$, (i) $\varphi_{b+a}a=a^{\prime}$ and (ii) $\varphi
_{b+a}c=\varphi_{b+a}\left(  c-a\right)  +a^{\prime}$ for all $c\in M_{b+a}$.
We have now shown that if $b\in A$, then $b+a\in A$. Hence $A=M$ by the
preceding theorem and for each $b\in M$ there exists a unique function
$\varphi_{b}$ satisfying the two conditions above.

Now define the function $\varphi:M\rightarrow S$ according to $\varphi
b=\varphi_{b}b$. We then have%
\[
\varphi a=\varphi_{a}a=a^{\prime}%
\]
and for any $b>a$%
\[
\varphi b=\varphi_{b}b=\varphi_{b}\left(  b-a\right)  +a^{\prime}%
=\varphi_{b-a}\left(  b-a\right)  +a^{\prime}=\varphi\left(  b-a\right)
+a^{\prime}\text{.}%
\]

\end{proof}

\begin{theorem}
\label{embedding discrete magnitude}If $M$ is a well ordered magnitude space,
$M^{\prime}$ is any magnitude space, $a$ is the smallest element in $M$, and
$a^{\prime}$ is any element in $M^{\prime}$, then there exists a unique
embedding $\varphi:M\rightarrow M^{\prime}$ such that $\varphi a=a^{\prime}$.
\end{theorem}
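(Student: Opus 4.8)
The plan is to apply the preceding theorem to obtain the unique function $\varphi:M\rightarrow M^{\prime}$ satisfying (i) $\varphi a=a^{\prime}$ and (ii) $\varphi b=\varphi\left(  b-a\right)  +a^{\prime}$ for all $b>a$, and then to show that this particular $\varphi$ is in fact a homomorphism. Once that is established, Theorem \ref{morphisms 1-1} immediately promotes $\varphi$ to an embedding, and the uniqueness clause will follow by showing that any embedding $\psi$ with $\psi a=a^{\prime}$ necessarily satisfies conditions (i) and (ii), so that $\psi=\varphi$ by the uniqueness already proved in the preceding theorem.

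First I would record the elementary identity $\varphi\left(  b+a\right)  =\varphi b+\varphi a$ for every $b\in M$. Since $b+a=a+b$, we have $a<b+a$ and $\left(  b+a\right)  -a=b$ by Definition \ref{definition subtraction}, so property (ii) applied to $b+a$ gives $\varphi\left(  b+a\right)  =\varphi\left(  \left(  b+a\right)  -a\right)  +a^{\prime}=\varphi b+a^{\prime}$, and property (i) rewrites $a^{\prime}$ as $\varphi a$. To get the full homomorphism property $\varphi\left(  b+c\right)  =\varphi b+\varphi c$, I would fix $b$ and let $A$ be the set of all $c\in M$ for which this holds. The base case $a\in A$ is exactly the identity just proved. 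For the inductive step, assume $c\in A$; then, using that identity (once with $b+c$ in place of $b$, once with $c$ in place of $b$), associativity of $+$ (Definition \ref{definition magnitude}), and the inductive hypothesis,
\begin{align*}
\varphi\left(  b+\left(  c+a\right)  \right)   & =\varphi\left(  \left(  b+c\right)  +a\right)  =\varphi\left(  b+c\right)  +\varphi a\\
& =\left(  \varphi b+\varphi c\right)  +\varphi a=\varphi b+\left(  \varphi c+\varphi a\right)  =\varphi b+\varphi\left(  c+a\right)  ,
\end{align*}
so $c+a\in A$. By Theorem \ref{induction}, $A=M$, and since $b$ was arbitrary, $\varphi$ is a homomorphism in the sense of Definition \ref{definition homomorphism}, hence an embedding by Theorem \ref{morphisms 1-1}.

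For uniqueness, suppose $\psi:M\rightarrow M^{\prime}$ is any embedding with $\psi a=a^{\prime}$. Then $\psi$ satisfies (i) by hypothesis, and for $b>a$ we have $b=\left(  b-a\right)  +a$ by Definition \ref{definition subtraction}, whence $\psi b=\psi\left(  \left(  b-a\right)  +a\right)  =\psi\left(  b-a\right)  +\psi a=\psi\left(  b-a\right)  +a^{\prime}$, since $\psi$ is a homomorphism; thus $\psi$ also satisfies (ii). By the uniqueness part of the preceding theorem, $\psi=\varphi$.

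I expect the main obstacle to be the homomorphism verification — specifically, keeping the bookkeeping straight in the inductive step and making sure the identity $\left(  b+a\right)  -a=b$ is genuinely extracted from Definition \ref{definition subtraction} (via commutativity) rather than silently assumed. Everything else is a short reduction to Theorems \ref{induction} and \ref{morphisms 1-1} and the preceding construction theorem; in particular I would not need Theorem \ref{equals added to equals or unequals} beyond what is already folded into Theorem \ref{morphisms 1-1}.
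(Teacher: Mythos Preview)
Your proposal is correct and follows essentially the same route as the paper: invoke the preceding construction theorem, derive the identity $\varphi(b+a)=\varphi b+a^{\prime}$ from property (ii), run an induction via Theorem \ref{induction} with one variable fixed to obtain the full homomorphism law, upgrade to an embedding by Theorem \ref{morphisms 1-1}, and deduce uniqueness by checking that any embedding with $\psi a=a^{\prime}$ satisfies (i) and (ii). The only differences are cosmetic (you write $\varphi a$ where the paper keeps $a^{\prime}$, and the roles of the fixed and inductive variables are swapped).
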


\begin{proof}
From the preceding theorem there is a unique function $\varphi:M\rightarrow
M^{\prime}$ such that (i) $\varphi a=a^{\prime}$ and (ii) $\varphi
b=\varphi\left(  b-a\right)  +a^{\prime}$ for any $b>a$. Note that from the
second property it follows that
\begin{align*}
\varphi\left(  b+a\right)   & =\varphi\left(  \left(  b+a\right)  -a\right)
+a^{\prime}\text{ (definition of }\varphi\text{)}\\
& =\varphi b+a^{\prime}\text{. (Definition \ref{definition subtraction})}%
\end{align*}
for any $b\in M$ since $b+a>a$.

I say that $\varphi$ is an embedding. To see this, fix $c\in M$ and define $A$
to be the set of all elements $d$ in $M$ such that $\varphi\left(  c+d\right)
=\varphi c+\varphi d$. Now $a\in A$ since $\varphi\left(  c+a\right)  =\varphi
c+a^{\prime}=\varphi c+\varphi a$ by note above. And if $d\in A$, then%
\begin{align*}
\varphi\left(  c+\left(  d+a\right)  \right)   & =\varphi\left(  \left(
c+d\right)  +a\right)  \text{ (associativity)}\\
& =\varphi\left(  c+d\right)  +a^{\prime}\text{ (note above)}\\
& =\varphi c+\varphi d+a^{\prime}\text{ (}d\in A\text{)}\\
& =\varphi c+\varphi\left(  d+a\right)  \text{ (note above)}%
\end{align*}
and so $d+a\in A$. Therefore, $A=M$ by Theorem \ref{induction}. And since our
choice of $c\in M$ was arbitrary, $\varphi\left(  c+d\right)  =\varphi
c+\varphi d$ for all $c,d\in M$. Thus $\varphi$ is an embedding by Definition
\ref{definition homomorphism} and Theorem \ref{morphisms 1-1}.

Now suppose $\psi$ is some other embedding of $M$ into $M^{\prime}$ which maps
$a$ into $a^{\prime}$. Then $\psi a=a^{\prime}$and, if $b>a$ then%
\begin{align*}
\psi b  & =\psi\left(  \left(  b-a\right)  +a\right)  \text{ (Definition
\ref{definition subtraction})}\\
& =\psi\left(  b-a\right)  +\psi a\text{ (Definition
\ref{definition homomorphism})}\\
& =\psi\left(  b-a\right)  +a^{\prime}\text{. (assumed property of }%
\varphi\text{)}%
\end{align*}
But by the preceding theorem, there is only one such mapping and hence
$\psi=\varphi$.
\end{proof}

\begin{theorem}
\label{well ordered magnitude spaces isomorphic}Any two well ordered magnitude
spaces are isomorphic.
\end{theorem}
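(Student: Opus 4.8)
The plan is to apply Theorem~\ref{embedding discrete magnitude} in both directions and then use its uniqueness clause to show that the resulting two embeddings are mutually inverse. First I would let $M$ and $M^{\prime}$ be well ordered magnitude spaces. Since $M$ is a nonempty subset of itself it has a smallest element $a$ (Definition~\ref{definition well ordered}), and likewise $M^{\prime}$ has a smallest element $a^{\prime}$. By Theorem~\ref{embedding discrete magnitude} there is a unique embedding $\varphi:M\rightarrow M^{\prime}$ with $\varphi a=a^{\prime}$, and, applying the same theorem with the roles of the two spaces reversed, a unique embedding $\psi:M^{\prime}\rightarrow M$ with $\psi a^{\prime}=a$.

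Next I would consider the composite $\psi\varphi:M\rightarrow M$. By Theorem~\ref{composition of homomophisms} it is an embedding, and $\left(\psi\varphi\right)a=\psi\left(\varphi a\right)=\psi a^{\prime}=a$. But the identity $i_{M}$ is also an embedding (Theorem~\ref{identity homomorphism}) that fixes $a$. Invoking the uniqueness assertion of Theorem~\ref{embedding discrete magnitude} in the case $M^{\prime}=M$, $a^{\prime}=a$, we conclude $\psi\varphi=i_{M}$. The symmetric argument, forming $\varphi\psi:M^{\prime}\rightarrow M^{\prime}$ and comparing with $i_{M^{\prime}}$, gives $\varphi\psi=i_{M^{\prime}}$.

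Finally, from $\varphi\psi=i_{M^{\prime}}$ and Theorem~\ref{proving function is onto} (taken with $S=M^{\prime}$, the first function being $\psi:M^{\prime}\rightarrow M$ and the second being $\varphi:M\rightarrow M^{\prime}$), the map $\varphi$ is onto. Thus $\varphi$ is an embedding which is onto as a mapping, hence an isomorphism by Definition~\ref{definition isomorphism isomorphic}, and therefore $M$ is isomorphic to $M^{\prime}$.

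I do not expect a serious obstacle here. The only point requiring care is that the inverse should not be built by hand but extracted from the uniqueness clause of Theorem~\ref{embedding discrete magnitude}, and that the hypotheses of Theorem~\ref{proving function is onto} must be matched up so that it is $\varphi$ (the map we want to be an isomorphism) rather than $\psi$ that is shown to be onto; using the relation $\varphi\psi=i_{M^{\prime}}$ rather than $\psi\varphi=i_{M}$ accomplishes this.
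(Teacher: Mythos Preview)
Your proposal is correct and follows essentially the same route as the paper: produce embeddings $\varphi$ and $\psi$ via Theorem~\ref{embedding discrete magnitude}, use the uniqueness clause of that theorem to identify a composite with the identity, and conclude $\varphi$ is onto via Theorem~\ref{proving function is onto}. The only difference is that you verify both $\psi\varphi=i_{M}$ and $\varphi\psi=i_{M^{\prime}}$, whereas the paper checks only the second; your extra step is harmless (indeed it gives the slightly stronger statement that $\varphi$ and $\psi$ are mutual inverses) but is not needed for the conclusion.
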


\begin{proof}
Assume that $M$ and $M^{\prime}$ are well ordered magnitude spaces with
smallest elements $a$ and $a^{\prime}$ respectively. There is an embedding
$\varphi:M\rightarrow M^{\prime}$ such that $\varphi a=a^{\prime}$ and an
embedding $\varphi^{\prime}:M^{\prime}\rightarrow M$ such that $\varphi
^{\prime}a^{\prime}=a$ from the preceding theorem. The composition
$\varphi\varphi^{\prime}:M^{\prime}\rightarrow M^{\prime}$ is an embedding by
Theorem \ref{composition of homomophisms} and the identity mapping
$i_{M^{\prime}}:M^{\prime}\rightarrow M^{\prime}$ is also an embedding by
Theorem \ref{identity homomorphism}. But%
\[
\left(  \varphi\varphi^{\prime}\right)  a^{\prime}=\varphi\left(
\varphi^{\prime}a^{\prime}\right)  =\varphi a=a^{\prime}=i_{M^{\prime}%
}a^{\prime}%
\]
and so $\varphi\varphi^{\prime}=i_{M^{\prime}}$ by the preceding theorem.
Therefore $\varphi$ is onto by Theorem \ref{proving function is onto} and
hence is an isomorphism according to Definition
\ref{definition isomorphism isomorphic}.
\end{proof}

\section{Natural Numbers and Integral Multiples\label{integral multiples}}

\begin{definition}
\label{definition natural numbers}Pick any well ordered magnitude space and
denote it by $%
\mathbb{N}
$ and denote the smallest element of $%
\mathbb{N}
$ by $1$. Since all well ordered magnitude spaces are isomorphic (Theorem
\ref{well ordered magnitude spaces isomorphic}) and we will use only the
algebraic properties that the well ordered magnitude spaces have in common, it
does not matter which well ordered magnitude space is chosen to play the role
of the \textquotedblleft number system\textquotedblright\ $%
\mathbb{N}
$. We call $%
\mathbb{N}
$ the \textbf{natural numbers}.
\end{definition}

\begin{definition}
\label{definition integral multiple}If $n\in%
\mathbb{N}
$ and $a$ is an element of a magnitude space $M$, then the \textbf{integral
multiple} $na$ is given by $na=\varphi_{1,a}n$ where $\varphi_{1,a}$ is the
unique embedding of $%
\mathbb{N}
$ into $M$ which maps $1$ into $a$ (Theorem \ref{embedding discrete magnitude}).
\end{definition}

\begin{theorem}
\label{1a and (n+1)a}For each $a\in M$, $\left(  n+1\right)  a=na+a$.
\end{theorem}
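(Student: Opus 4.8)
The plan is to unwind the definition of integral multiple and appeal to the homomorphism property of the canonical embedding. Write $\varphi = \varphi_{1,a}$ for the unique embedding of $\mathbb{N}$ into $M$ with $\varphi 1 = a$, which exists by Theorem \ref{embedding discrete magnitude} and which is used to define integral multiples in Definition \ref{definition integral multiple}. By that definition, $(n+1)a = \varphi(n+1)$ and $na = \varphi n$, and in particular $1a = \varphi 1 = a$.

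First I would observe that $n+1$ really is an element of $\mathbb{N}$: since $\mathbb{N}$ is a magnitude space it is closed under its binary operation $+$, so $\varphi(n+1)$ is meaningful and Definition \ref{definition integral multiple} applies to it. Next, since an embedding is in particular a homomorphism (Definition \ref{definition homomorphism}), we have $\varphi(n+1) = \varphi n + \varphi 1$. Substituting $\varphi n = na$ (Definition \ref{definition integral multiple}) and $\varphi 1 = a$ then yields $(n+1)a = na + a$, which is the claim.

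I do not expect any genuine obstacle: the statement is really just the homomorphism equation $\varphi(n+1) = \varphi n + \varphi 1$ translated through the notation $na = \varphi n$. The only place warranting a moment's care is noting that the ``$n+1$'' on the left-hand side is the sum computed inside $\mathbb{N}$, so that it is a legitimate natural number and the defining formula for $(\cdot)a$ applies to it directly.
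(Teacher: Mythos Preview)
Your proof is correct and matches the paper's own argument essentially line for line: define $\varphi=\varphi_{1,a}$, use Definition~\ref{definition integral multiple} to rewrite $(n+1)a$ and $na$, apply the homomorphism identity $\varphi(n+1)=\varphi n+\varphi 1$, and substitute $\varphi 1=a$. The extra remark about $n+1\in\mathbb{N}$ is harmless but not something the paper bothers to mention.
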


\begin{proof}
Let $\varphi_{1,a}$ be the unique embedding of $%
\mathbb{N}
$ into $M$ which maps $1$ into $a$. Then
\begin{align*}
\left(  n+1\right)  a  & =\varphi_{1,a}\left(  n+1\right)  \text{ (Definition
\ref{definition integral multiple})}\\
& =\varphi_{1,a}n+\varphi_{1,a}1\text{ (Definition
\ref{definition homomorphism})}\\
& =na+a\text{. (Definition \ref{definition integral multiple})}%
\end{align*}

\end{proof}

\begin{remark}
For each $a$ in a magnitude space $M$%
\begin{align*}
1a  & =\varphi_{1,a}1=a\\
\left(  1+1\right)  a  & =\varphi_{1,a}\left(  1+1\right)  =\varphi
_{1,a}1+a=a+a\\
\left(  1+1+1\right)  a  & =\varphi_{1,a}\left(  1+1+1\right)  =\varphi
_{1,a}\left(  1+1\right)  +a=a+a+a\\
& \vdots
\end{align*}

\end{remark}

\begin{theorem}
\label{multiples of morphisms}If $\chi:M\rightarrow M^{\prime}$ is an
embedding, then $\chi\left(  na\right)  =n\left(  \chi a\right)  $ for every
$a\in M$ and $n\in%
\mathbb{N}
$.
\end{theorem}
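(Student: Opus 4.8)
The plan is to fix an arbitrary $a \in M$ and induct on $n$ via the induction principle for well ordered magnitude spaces (Theorem \ref{induction}), applied to $\mathbb{N}$ with its smallest element $1$. Concretely, I would let $A$ be the set of all $n \in \mathbb{N}$ for which $\chi(na) = n(\chi a)$, and show that $1 \in A$ and that $n \in A$ implies $n + 1 \in A$; Theorem \ref{induction} then gives $A = \mathbb{N}$, and since $a$ was arbitrary we are done.

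For the base case I would use the remark following Theorem \ref{1a and (n+1)a}, which records that $1b = b$ for every element $b$ of any magnitude space. Applying this to $a \in M$ and to $\chi a \in M'$ gives $\chi(1a) = \chi a = 1(\chi a)$, so $1 \in A$.

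For the inductive step, assume $n \in A$. The computation
\begin{align*}
\chi\left(\left(n+1\right)a\right) &= \chi\left(na + a\right) \\
&= \chi\left(na\right) + \chi a \\
&= n\left(\chi a\right) + \chi a \\
&= \left(n+1\right)\left(\chi a\right)
\end{align*}
uses, line by line: Theorem \ref{1a and (n+1)a} in $M$; the fact that $\chi$ is a homomorphism (Definition \ref{definition homomorphism}), which is what lets $\chi$ distribute over the sum $na + a$; the inductive hypothesis $n \in A$; and Theorem \ref{1a and (n+1)a} in $M'$ applied to $\chi a$. Hence $n + 1 \in A$, and Theorem \ref{induction} finishes the argument.

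I do not anticipate a genuine obstacle here. The only thing to be a little careful about is that Theorem \ref{1a and (n+1)a} gets invoked in two different magnitude spaces (once on $a$ in $M$, once on $\chi a$ in $M'$), and that it is precisely the homomorphism hypothesis on $\chi$ — not merely its being one-to-one — that is doing the work in the middle step.
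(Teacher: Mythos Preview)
Your proof is correct, but it takes a different route from the paper's. The paper argues by uniqueness rather than induction: it notes that both $\varphi_{1,\chi a}$ and the composite $\chi\varphi_{1,a}$ are embeddings of $\mathbb{N}$ into $M'$ sending $1$ to $\chi a$, so by the uniqueness clause of Theorem~\ref{embedding discrete magnitude} they coincide, which immediately gives $n(\chi a)=\varphi_{1,\chi a}n=(\chi\varphi_{1,a})n=\chi(na)$. Your inductive argument is more elementary and hands-on, relying only on Theorem~\ref{1a and (n+1)a} and the homomorphism property; the paper's argument is shorter and more conceptual, exploiting that the very definition of $na$ already packages the induction into the construction of $\varphi_{1,a}$. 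Both are perfectly valid; the paper's approach is also the template it reuses later (e.g.\ in Propositions~\ref{V 1}--\ref{V 3}), so it fits the surrounding narrative a bit better.
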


\begin{proof}
Let $a$ be fixed. There exists a unique embedding $\varphi_{1,a}$ of $%
\mathbb{N}
$ into $M$ which maps $1$ into $a$ and a unique embedding $\varphi_{1,\chi a}$
of $%
\mathbb{N}
$ into $M^{\prime}$ which maps $1$ into $\chi a$ by Theorem
\ref{embedding discrete magnitude}. Now $\chi\varphi_{1,a}:%
\mathbb{N}
\rightarrow M^{\prime}$ is an embedding by Theorem
\ref{composition of homomophisms} and%
\begin{align*}
\left(  \chi\varphi_{1,a}\right)  1  & =\chi\left(  \varphi_{1,a}1\right)
\text{ (Definition \ref{definition composition})}\\
& =\chi\left(  a\right)  .\text{ (definition of }\varphi_{1,a}\text{)}%
\end{align*}
Therefore each of $\varphi_{1,\chi a}$ and $\chi\varphi_{1,a}$ are embeddings
of $%
\mathbb{N}
$ into $M^{\prime}$ which map $1$ (the smallest element in $%
\mathbb{N}
$) into the same element $\chi a\in M^{\prime}$ and hence $\varphi_{1,\chi
a}=\chi\varphi_{1,a}$ by Theorem \ref{embedding discrete magnitude}.
Therefore
\begin{align*}
n\left(  \chi a\right)   & =\varphi_{1,\chi a}n\text{ (Definition
\ref{definition integral multiple})}\\
& =\left(  \chi\varphi_{1,a}\right)  n\text{ (Definition
\ref{definition equal functions})}\\
& =\chi\left(  \varphi_{1,a}n\right)  \text{ (Definition
\ref{definition composition})}\\
& =\chi\left(  na\right)  \text{. (Definition
\ref{definition integral multiple})}%
\end{align*}

\end{proof}

\begin{theorem}
\label{prelude to homomorphisms are proportional}If $\varphi:M\rightarrow
M^{\prime}$ is an embedding and $a,b\in M$, then for each pair $m,n\in%
\mathbb{N}
$, $m\left(  \varphi a\right)  $ has to $n\left(  \varphi b\right)  $ the same
relation (%
$<$%
, =, or
$>$%
) as $ma$ has to $nb$.
\end{theorem}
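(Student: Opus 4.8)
The plan is to reduce everything to two results already in hand: Theorem \ref{multiples of morphisms}, which tells us an embedding commutes with taking integral multiples, and Theorem \ref{morphisms 1-1}, which tells us an embedding preserves and reflects the trichotomous relation exactly. The statement to be proved is essentially the composition of these two facts.

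First I would apply Theorem \ref{multiples of morphisms} to the embedding $\varphi$ at the elements $a$ and $b$, obtaining $m(\varphi a)=\varphi(ma)$ and $n(\varphi b)=\varphi(nb)$ for the given $m,n\in\mathbb{N}$. This converts the question about $m(\varphi a)$ versus $n(\varphi b)$ into a question about $\varphi(ma)$ versus $\varphi(nb)$, i.e.\ about the images under $\varphi$ of the two elements $ma$ and $nb$ of $M$.

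Next I would invoke Theorem \ref{morphisms 1-1} with the elements $ma$ and $nb$ in place of the generic $a$ and $b$: since $\varphi$ is a homomorphism, $\varphi(ma)$ has to $\varphi(nb)$ the same relation ($<$, $=$, or $>$) as $ma$ has to $nb$. Substituting back the equalities from the first step, $m(\varphi a)$ has to $n(\varphi b)$ the same relation as $ma$ has to $nb$, which is exactly the claim. I do not anticipate a real obstacle here — the only thing to be careful about is to state the substitutions cleanly, since the relation being compared is the three-way one and we are chaining a "same relation as" statement through two equalities; but that is routine given Theorems \ref{multiples of morphisms} and \ref{morphisms 1-1}.
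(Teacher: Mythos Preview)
Your proposal is correct and matches the paper's own proof essentially line for line: the paper first applies Theorem \ref{multiples of morphisms} to get $m(\varphi a)=\varphi(ma)$ and $n(\varphi b)=\varphi(nb)$, then invokes Theorem \ref{morphisms 1-1} on the elements $ma$ and $nb$, and finally substitutes back.
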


\begin{proof}
First, $m\left(  \varphi a\right)  =\varphi\left(  ma\right)  $ and $n\left(
\varphi b\right)  =\varphi\left(  nb\right)  $ by the preceding theorem.
Second, $\varphi\left(  ma\right)  $ has to $\varphi\left(  nb\right)  $ the
same relation (%
$<$%
, =, or
$>$%
) as $ma$ has to $nb$ by Theorem \ref{morphisms 1-1}. Therefore $m\left(
\varphi a\right)  $ has to $n\left(  \varphi b\right)  $ the same relation (%
$<$%
, =, or
$>$%
) as $ma$ has to $nb$.
\end{proof}

\section{Embeddings and Ratios}

The preceding theorem is the connecting point between modern algebraic
definitions of number systems and the classical theory of ratios. The
classical theory is based on the following two definitions.

\begin{definition}
[Euclid V, Definition 4]\label{definition have a ratio}Two elements $a,b$ in a
magnitude space are said to \textbf{have a ratio} if there are natural numbers
$m,n$ such that $ma>b$ and $nb>a$.
\end{definition}

\begin{definition}
[Euclid V, Definitions 5, 6, and 7]\label{revised euclid def of ratios}Let $M$
and $M^{\prime}$ be magnitude spaces. We say that a pair $a,b\in M$ has the
\textbf{same ratio} as (or are \textbf{proportional} to) a pair $a^{\prime
},b^{\prime}\in M^{\prime}$ if $ma$ has to $nb$ the same relation (%
$<$%
, =, or
$>$%
) as $ma^{\prime}$ has to $nb^{\prime}$ for every $m,n\in%
\mathbb{N}
$. And in this case we write $a:b=a^{\prime}:b^{\prime}$. And if for some
$m,n\in%
\mathbb{N}
$, $ma>nb$ and $ma^{\prime}\leq nb^{\prime}$, then we say $a$ has to $b$ a
\textbf{greater ratio} than $a^{\prime}$ has to $b^{\prime}$ and we write
$a:b>a^{\prime}:b^{\prime}$ or $a^{\prime}:b^{\prime}<a:b$.
\end{definition}

We are particularly interested in magnitude spaces in which every pair of
elements have a ratio. The next two theorems establish that a magnitude space
has this property if and only if it is an Archimedean magnitude space.

\begin{theorem}
If every pair of elements of a magnitude space have a ratio, then the
magnitude space is Archimedean.
\end{theorem}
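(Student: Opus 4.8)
The plan is to fix an element $a\in M$ and a nonempty subset $A\subseteq M$ with an upper bound $u$, and to produce the required $\zeta$ by climbing up from a point of $A$ in steps of size $a$. First I would fix any $b\in A$ (possible since $A$ is nonempty) and look at the integral multiples $b,\ b+a,\ b+2a,\dots$. Applying the hypothesis to the pair $a,u$ gives natural numbers $m,n$ with $ma>u$ and $nu>a$ (Definition \ref{definition have a ratio}); only the inequality $ma>u$ is needed. Then $b+ma>ma>u$ (the whole is greater than the part, followed by transitivity), so $b+ma$ is greater than an upper bound of $A$ and hence $b+ma\notin A$ by Theorem \ref{element greater than upper bound}. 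So the climb does eventually leave $A$, and the last rung still inside $A$ will be $\zeta$.

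To carry this out, let $C=\{k\in\mathbb{N}:b+ka\notin A\}$. The observation above shows $m\in C$, so $C$ is a nonempty subset of the well ordered magnitude space $\mathbb{N}$ and therefore has a least element $k$ (Definitions \ref{definition natural numbers} and \ref{definition well ordered}). If $k$ equals the smallest element $1$ of $\mathbb{N}$, then, since $1a=a$, we have $b+a\notin A$, and $\zeta=b$ does the job. If instead $1<k$, then $k=1+d$ for some $d\in\mathbb{N}$ (Definition \ref{definition <}) and $k-1=d\in\mathbb{N}$ (Definition \ref{definition subtraction}); since $k-1<k$ and $k$ is least in $C$ we get $k-1\notin C$, i.e.\ $b+(k-1)a\in A$. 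Setting $\zeta=b+(k-1)a$ we have $\zeta\in A$, and, using $(k-1)a+a=ka$ (Theorem \ref{1a and (n+1)a}) together with associativity,
\[
\zeta+a=\bigl(b+(k-1)a\bigr)+a=b+\bigl((k-1)a+a\bigr)=b+ka\notin A .
\]
Either way $\zeta\in M$ satisfies $\zeta\in A$ and $\zeta+a\notin A$, so $M$ is Archimedean by Definition \ref{definition Archimedean}.

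The one point that needs care is the case split on $k=1$: the ``predecessor'' $k-1$ lives in $\mathbb{N}$ only when $k\neq1$, since a magnitude space has no zero, but in the exceptional case $k=1$ the starting point $b$ itself is already the witness, so nothing is lost. I do not expect any genuine obstacle beyond this — the remainder is bookkeeping with integral multiples ($1a=a$ and $(k-1)a+a=ka$ from Section \ref{integral multiples}) plus the observation that only the inequality $ma>u$, not the full existence of a ratio between $a$ and $u$, is actually used.
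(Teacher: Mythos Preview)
Your proof is correct and close in spirit to the paper's: both use the ratio hypothesis only to extract some $m$ with $ma>u$, and then appeal to the well-ordering of $\mathbb{N}$ to locate $\zeta$. The organization differs slightly. The paper looks at $B=\{n\in\mathbb{N}:na\text{ is an upper bound of }A\}$, takes its least element $n$, and then (for $n>1$) uses that $(n-1)a$ is not an upper bound to find some $\zeta\in A$ with $(n-1)a<\zeta$, whence $\zeta+a>na$ forces $\zeta+a\notin A$. You instead anchor the climb at a fixed $b\in A$ and take the least $k$ with $b+ka\notin A$, which hands you $\zeta=b+(k-1)a$ (or $\zeta=b$) directly. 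Your variant has the minor advantage that $\zeta$ is produced explicitly rather than via an existence argument, and the conclusion $\zeta+a\notin A$ is immediate from $k\in C$; the paper's variant avoids fixing a basepoint. Both are equally valid and rest on the same two ingredients.
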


\begin{proof}
Assume $M$ is a magnitude space in which every pair of elements have a ratio.
Let $a\in M$ and let $A$ be a nonempty subset of $M$ which has an upper bound.
I say there is some $\zeta\in M$ such that $\zeta\in A$ and $\zeta+a\notin A$.

Let $B$ be the set of all natural numbers $n$ such that $na$ is an upper bound
of $A$. If $b$ is any upper bound of $A$, then there is some natural number
$n$ such that $na>b$ because $a$ and $b$ have a ratio by assumption and
Definition \ref{definition have a ratio}$.$ Thus $B$ is nonempty. And the
natural numbers are a well ordered magnitude space by Definition
\ref{definition natural numbers}. Hence $B$ has a smallest element by
Definition \ref{definition well ordered}$.$ Let $n$ be the smallest element of
$B$.

Case 1: $n=1$. Pick any $\zeta\in A$. Then $na<\zeta+a$.

Case 2: $n>1$. Since $n$ is the smallest natural number such that $na$ is an
upper bound of $A$ and $n-1<n$ by Theorem \ref{b-a < a}, it follows that
$\left(  n-1\right)  a$ is not an upper bound of $A$. Therefore there must be
some $\zeta\in A$ such that $\left(  n-1\right)  a<\zeta$ by Definition
\ref{definition bounds}. Thus $\left(  n-1\right)  a+a<\zeta+a$ by Theorem
\ref{Translation by a} and
\begin{align*}
\left(  n-1\right)  a+a  & =\left(  \left(  n-1\right)  +1\right)  a\text{
(Theorem \ref{1a and (n+1)a})}\\
& =na\text{. (Definition \ref{definition subtraction})}%
\end{align*}
Therefore $na<\zeta+a$.

In both cases, $\zeta\in A$ and $\zeta+a$ is greater than an upper bound of
$A$ and hence $\zeta+a\notin A$ by Theorem
\ref{element greater than upper bound}. Therefore $M$ is an Archimedean
magnitude space according to Definition \ref{definition Archimedean}.
\end{proof}

\begin{theorem}
\label{some n such that na>b}If $a$ and $b$ are elements of an Archimedean
magnitude space, then there is some natural number $n$ such that $na>b$.
\end{theorem}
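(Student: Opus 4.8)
The plan is to argue by contradiction, applying the Archimedean property to the set of all integral multiples of $a$. Suppose $M$ is an Archimedean magnitude space, $a,b\in M$, and that no natural number $n$ satisfies $na>b$. By trichotomy of $<$ in a magnitude space (Theorem \ref{< is trichotomous}), this assumption means that for every $n\in\mathbb{N}$ we have $na<b$ or $na=b$, that is, $na\leq b$.

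Next I would introduce the set $A=\left\{\,na:n\in\mathbb{N}\,\right\}$. This set is nonempty because $\mathbb{N}$ is a nonempty set, and $b$ is an upper bound of $A$ by the previous paragraph and Definition \ref{definition bounds}. Hence $A$ is a nonempty subset of $M$ with an upper bound, so the Archimedean hypothesis (Definition \ref{definition Archimedean}) applies to the element $a$ and the set $A$: there is some $\zeta\in M$ with $\zeta\in A$ and $\zeta+a\notin A$.

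Finally, since $\zeta\in A$ there is some $n\in\mathbb{N}$ with $\zeta=na$, and then
\[
\zeta+a=na+a=\left(n+1\right)a
\]
by Theorem \ref{1a and (n+1)a}, so $\zeta+a$ is again an integral multiple of $a$ and therefore $\zeta+a\in A$. This contradicts $\zeta+a\notin A$, completing the argument.

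The only point requiring care is checking that the contradiction hypothesis genuinely yields a \emph{bounded} nonempty subset, so that Definition \ref{definition Archimedean} can legitimately be invoked; this is immediate from trichotomy once one observes that the failure of $na>b$ for all $n$ is exactly the statement that $na\leq b$ for all $n$. The remaining step is just the identification $\zeta+a=\left(n+1\right)a$, which is precisely Theorem \ref{1a and (n+1)a}.
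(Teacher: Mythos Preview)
Your proof is correct and follows essentially the same approach as the paper: both argue by contradiction, form the set $A$ of integral multiples of $a$, observe that $b$ is an upper bound, apply the Archimedean property to obtain $\zeta\in A$ with $\zeta+a\notin A$, and then derive a contradiction via Theorem \ref{1a and (n+1)a}. Your version is slightly more detailed in justifying why the negation of $na>b$ yields $na\leq b$ and why $A$ is nonempty, but the argument is the same.
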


\begin{proof}
Suppose $na\leq b$ for all natural numbers $n$. Let $A$ be the set of all
integral multiples of $a$. Then $b$ is an upper bound of $A$ by Definition
\ref{definition bounds}. Thus there is some element $\zeta\in M$ such that
$\zeta\in A$ and $\zeta+a\notin A$ by Definition \ref{definition Archimedean}.
But if $\zeta\in A$, then $\zeta=na$ for some natural number $n$ and
$\zeta+a=na+a$. And $na+a=\left(  n+1\right)  a$ by Theorem
\ref{1a and (n+1)a}. Hence $\zeta+a$ is an integral multiple of $a$ and hence
$\zeta+a\in A$ which is a contradiction.
\end{proof}

\begin{theorem}
Any two elements of an Archimedean magnitude space have a ratio.
\end{theorem}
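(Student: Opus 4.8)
The plan is to read off the conclusion directly from Theorem \ref{some n such that na>b}. The definition of having a ratio (Definition \ref{definition have a ratio}) asks only for the existence of natural numbers $m,n$ with $ma>b$ and $nb>a$, and the preceding theorem already hands us, for any two elements of an Archimedean magnitude space, a natural number whose integral multiple of the first exceeds the second.

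Concretely, first I would let $a$ and $b$ be arbitrary elements of an Archimedean magnitude space $M$. Applying Theorem \ref{some n such that na>b} to the ordered pair $(a,b)$ yields a natural number $m$ with $ma>b$. Then I would apply the very same theorem to the ordered pair $(b,a)$ — the hypothesis is symmetric in the two elements, so this is legitimate — to obtain a natural number $n$ with $nb>a$. These two facts are exactly what Definition \ref{definition have a ratio} requires, so $a$ and $b$ have a ratio.

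There is essentially no obstacle here: the theorem is a one-line corollary of Theorem \ref{some n such that na>b}, the only subtlety being the trivial observation that the Archimedean hypothesis is symmetric so the earlier theorem may be invoked with the two elements in either order. The proof will therefore consist of two applications of that theorem followed by a citation of the definition, with no calculation required.
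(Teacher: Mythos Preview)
Your proposal is correct and matches the paper's own proof, which is stated in one line as ``Previous theorem and Definition \ref{definition have a ratio}.'' You have simply spelled out the two applications of Theorem \ref{some n such that na>b} that the paper leaves implicit.
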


\begin{proof}
Previous theorem and Definition \ref{definition have a ratio}.
\end{proof}

\begin{theorem}
\label{homomorphisms are proportional}If $M$ and $M^{\prime}$ are Archimedean
magnitude spaces and $\varphi:M\rightarrow M^{\prime}$ is an embedding, then
$\varphi a:\varphi b=a:b$ for every $a,b\in M.$
\end{theorem}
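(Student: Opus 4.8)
The plan is to unwind the definition of \emph{same ratio} (Definition \ref{revised euclid def of ratios}) and reduce the claim to a statement about integral multiples that has already been proved. By definition, $\varphi a:\varphi b=a:b$ means precisely that for every $m,n\in\mathbb{N}$, the element $m(\varphi a)$ has to $n(\varphi b)$ the same relation ($<$, $=$, or $>$) as $ma$ has to $nb$. But this is exactly the assertion of Theorem \ref{prelude to homomorphisms are proportional}, which holds for \emph{any} embedding between magnitude spaces, with no Archimedean hypothesis needed.

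So the proof is essentially a one-line invocation: fix $a,b\in M$ and $m,n\in\mathbb{N}$; by Theorem \ref{prelude to homomorphisms are proportional}, $m(\varphi a)$ has to $n(\varphi b)$ the same relation as $ma$ has to $nb$; since this holds for all $m,n$, we conclude $\varphi a:\varphi b=a:b$ by Definition \ref{revised euclid def of ratios}. First I would state that we fix arbitrary $a,b\in M$, then cite the earlier theorem for arbitrary $m,n$, then close by appealing to the definition of proportionality.

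The only subtlety worth a sentence is why the Archimedean hypothesis appears in the statement at all, given that it is not used. The natural reading is that this theorem is being positioned within the theory of ratios (Euclid V), where one typically only speaks of ratios between magnitudes that \emph{have} a ratio — and by the immediately preceding theorems, that condition is automatic in an Archimedean space. So the hypothesis is there for context and to ensure the objects $a:b$ and $\varphi a:\varphi b$ are genuine ratios in the classical sense, not because the equality $\varphi a:\varphi b=a:b$ needs it. I do not anticipate any real obstacle: the content is entirely in Theorem \ref{prelude to homomorphisms are proportional} (which in turn rests on Theorem \ref{multiples of morphisms} and Theorem \ref{morphisms 1-1}), and the present theorem merely repackages that conclusion in the language of Definition \ref{revised euclid def of ratios}.
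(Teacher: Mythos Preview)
Your proposal is correct and matches the paper's proof essentially line for line: fix $a,b$, invoke Theorem \ref{prelude to homomorphisms are proportional} for arbitrary $m,n$, and conclude via Definition \ref{revised euclid def of ratios}. Your reading of the Archimedean hypothesis is also on target; the paper uses it exactly as you surmise, opening with the remark that any two elements of $M$ (and of $M'$) have a ratio by the preceding theorem, before proceeding to the core argument.
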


\begin{proof}
Any two elements of $M$ have a ratio from the previous theorem. And likewise
any two elements of $M^{\prime}$ have a ratio. Fix $a,b\in M$. For any two
natural numbers $m$ and $n$, $m\left(  \varphi a\right)  $ has to $n\left(
\varphi b\right)  $ the same relation (%
$<$%
, =, or
$>$%
) as $ma$ has to $nb$ by Theorem
\ref{prelude to homomorphisms are proportional}. Therefore $\varphi a:\varphi
b=a:b$ according to Definition \ref{revised euclid def of ratios}.
\end{proof}

\section{Classical Theory of Ratios}

Henceforth we shall consider only Archimedean magnitude spaces.

In this section variables $a,b,c,d,e,f$ are all elements of a magnitude space
$M$, $a^{\prime},b^{\prime},c^{\prime},d^{\prime},e^{\prime},f^{\prime}$ are
all elements of a magnitude space $M^{\prime}$, and so on. Variables $j,k,m,n$
are natural numbers.

The propositions in this section appear in the exact order as the propositions
in Book V of The Elements. The proofs are likewise similar with the following exceptions.

\begin{enumerate}
\item Proofs for Propositions \ref{V 1}, \ref{V 2}, and \ref{V 3} are based on
Definition \ref{definition integral multiple} and Theorem
\ref{embedding discrete magnitude}.

\item The proofs of Propositions \ref{V 10} and \ref{V 18} follow those of
Robert Simson.\cite{Simson}
\end{enumerate}

\begin{proposition}
\label{V 1}$n(a+b)=na+nb$.
\end{proposition}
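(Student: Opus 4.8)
The plan is to fix $a, b \in M$ and run an induction on $n$ using Theorem \ref{induction}, exactly as the introduction to this section advertises. Let $\varphi_{1,a}$, $\varphi_{1,b}$, and $\varphi_{1,a+b}$ denote the unique embeddings of $\mathbb{N}$ into $M$ sending $1$ to $a$, to $b$, and to $a+b$ respectively (Theorem \ref{embedding discrete magnitude}), so that by Definition \ref{definition integral multiple} the claim reads $\varphi_{1,a+b}\,n = \varphi_{1,a}\,n + \varphi_{1,b}\,n$ for every $n \in \mathbb{N}$.

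The cleanest route is to observe that the sum of two embeddings is again an embedding. By Definition \ref{definition sum of embeddings} the function $\psi := \varphi_{1,a} + \varphi_{1,b} : \mathbb{N} \to M$ is given by $\psi n = \varphi_{1,a}n + \varphi_{1,b}n$, and by Theorem \ref{sum of homomorphisms} it is an embedding. Moreover $\psi 1 = \varphi_{1,a}1 + \varphi_{1,b}1 = a + b$, so $\psi$ is an embedding of $\mathbb{N}$ into $M$ carrying the smallest element $1$ to $a+b$. But $\varphi_{1,a+b}$ is the \emph{unique} such embedding by Theorem \ref{embedding discrete magnitude}, hence $\psi = \varphi_{1,a+b}$. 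Applying both sides to an arbitrary $n$ and unwinding Definition \ref{definition integral multiple} gives $n(a+b) = \varphi_{1,a+b}n = \psi n = \varphi_{1,a}n + \varphi_{1,b}n = na + nb$, as desired.

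If one instead prefers a direct induction matching Euclid's style, let $A$ be the set of $n \in \mathbb{N}$ with $n(a+b) = na + nb$. The base case $n = 1$ is the remark that $1c = c$ for any $c$. For the inductive step, assume $n \in A$; then using Theorem \ref{1a and (n+1)a} three times together with commutativity and associativity of $+$,
\[
(n+1)(a+b) = n(a+b) + (a+b) = (na + nb) + (a+b) = (na + a) + (nb + b) = (n+1)a + (n+1)b,
\]
so $n+1 \in A$, and $A = \mathbb{N}$ by Theorem \ref{induction}. The only point needing any care is the shuffling of terms in that display, which is purely an appeal to Definition \ref{definition magnitude}(i)--(ii); I do not foresee a genuine obstacle here, since every ingredient is already in place.
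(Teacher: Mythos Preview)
Your primary argument---forming $\psi=\varphi_{1,a}+\varphi_{1,b}$, observing via Theorem~\ref{sum of homomorphisms} that it is an embedding sending $1$ to $a+b$, and invoking the uniqueness clause of Theorem~\ref{embedding discrete magnitude} to conclude $\psi=\varphi_{1,a+b}$---is exactly the paper's proof. (A small misreading: the section preamble does not advertise induction for Propositions~\ref{V 1}--\ref{V 3} but rather the route through Definition~\ref{definition integral multiple} and Theorem~\ref{embedding discrete magnitude}, which is what you actually carried out; your second, inductive proof is a correct alternative but not the paper's approach.)
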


\begin{proof}
Let $a$ and $b$ be fixed. There exists a unique embeddings $\varphi_{1,a}$,
$\varphi_{1,b}$, and $\varphi_{1,a+b}$ of $%
\mathbb{N}
$ into $M$ which map $1$ into $a$, $b$, and $a+b$ respectively by Theorem
\ref{embedding discrete magnitude}. Note that $\varphi_{1,a}+\varphi_{1,b}$ is
an embedding of $%
\mathbb{N}
$ into $M$ by Theorem \ref{sum of homomorphisms} and%
\begin{align*}
\left(  \varphi_{1,a}+\varphi_{1,b}\right)  1  & =\varphi_{1,a}1+\varphi
_{1,b}1\text{ (Definition \ref{definition sum of embeddings})}\\
& =a+b\text{. (definitions of }\varphi_{1,a}\text{ and }\varphi_{1,b}\text{)}%
\end{align*}
Thus $\varphi_{1,a+b}$ and $\varphi_{1,a}+\varphi_{1,b}$ are each embeddings
of $%
\mathbb{N}
$ into $M$ which map $1$ into $a+b$ and hence $\varphi_{1,a+b}=\varphi
_{1,a}+\varphi_{1,b}$ by Theorem \ref{embedding discrete magnitude}. Therefore%
\begin{align*}
n(a+b)  & =\varphi_{1,a+b}n\text{ (Definition
\ref{definition integral multiple})}\\
& =\left(  \varphi_{1,a}+\varphi_{1,b}\right)  n\text{ (Definition
\ref{definition equal functions})}\\
& =\varphi_{1,a}n+\varphi_{1,b}n\text{ (Definition
\ref{definition sum of embeddings})}\\
& =na+nb\text{. (Definition \ref{definition integral multiple})}%
\end{align*}

\end{proof}

\begin{proposition}
\label{V 2}$\left(  m+n\right)  a=ma+na$.
\end{proposition}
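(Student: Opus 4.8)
The plan is to mimic the proof of Proposition \ref{V 1}, exploiting that the integral multiple $na$ is nothing but the value of the canonical embedding $\varphi_{1,a}:\mathbb{N}\rightarrow M$ at $n$, together with the fact that this embedding is a homomorphism.

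First I would fix $a\in M$ and invoke Theorem \ref{embedding discrete magnitude} to obtain the unique embedding $\varphi_{1,a}$ of $\mathbb{N}$ into $M$ sending $1$ to $a$. Since $m+n$ is a well-defined element of the magnitude space $\mathbb{N}$, Definition \ref{definition integral multiple} gives $(m+n)a=\varphi_{1,a}(m+n)$, $ma=\varphi_{1,a}m$, and $na=\varphi_{1,a}n$. Then the homomorphism property (Definition \ref{definition homomorphism}) yields $\varphi_{1,a}(m+n)=\varphi_{1,a}m+\varphi_{1,a}n$, and stringing these equalities together gives $(m+n)a=ma+na$.

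There is essentially no obstacle here: the statement is an immediate unpacking of the definition of integral multiple and the homomorphism axiom, which is exactly why the introductory note flags it as resting on Definition \ref{definition integral multiple} and Theorem \ref{embedding discrete magnitude}. Should one prefer to avoid the embedding language, an alternative is to fix $a$ and $m$ and run induction on $n$ via Theorem \ref{induction}: the base case $n=1$ reads $(m+1)a=ma+a=ma+1a$ using Theorem \ref{1a and (n+1)a}, and the inductive step uses associativity of $+$ in $\mathbb{N}$ together with Theorem \ref{1a and (n+1)a} to write $(m+(n+1))a=((m+n)+1)a=(m+n)a+a=(ma+na)+a=ma+(na+a)=ma+(n+1)a$. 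But the embedding proof is shorter and matches the style of Proposition \ref{V 1}, so that is the one I would write.
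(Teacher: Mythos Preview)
Your primary proof is correct and matches the paper's own argument essentially line for line: fix $a$, invoke Theorem~\ref{embedding discrete magnitude} to get $\varphi_{1,a}$, then use Definition~\ref{definition integral multiple} and the homomorphism property (Definition~\ref{definition homomorphism}) to conclude $(m+n)a=\varphi_{1,a}(m+n)=\varphi_{1,a}m+\varphi_{1,a}n=ma+na$. The inductive alternative you sketch is also valid but, as you note, unnecessary here.
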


\begin{proof}
Let $a$ be fixed. There exists a unique embedding $\varphi_{1,a}$ of $%
\mathbb{N}
$ into $M$ which maps $1$ into $a$ by Theorem
\ref{embedding discrete magnitude} and%
\begin{align*}
\left(  m+n\right)  a  & =\varphi_{1,a}\left(  m+n\right)  \text{ (Definition
\ref{definition integral multiple})}\\
& =\varphi_{1,a}m+\varphi_{1,a}n\text{ (Definition
\ref{definition homomorphism})}\\
& =ma+na\text{. (Definition \ref{definition integral multiple})}%
\end{align*}

\end{proof}

\begin{proposition}
\label{V 3}$\left(  mn\right)  a=m\left(  na\right)  $.
\end{proposition}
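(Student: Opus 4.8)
The plan is to imitate the proofs of Propositions \ref{V 1} and \ref{V 2}: translate the assertion about integral multiples into an assertion about the canonical embeddings of $\mathbb{N}$, and then close the gap with the uniqueness clause of Theorem \ref{embedding discrete magnitude}. The first point to pin down is what $mn$ means: since $\mathbb{N}$ is itself a magnitude space (Definition \ref{definition natural numbers}), the symbol $mn$ is the integral multiple of the element $n\in\mathbb{N}$ by the natural number $m$, i.e. $mn=\varphi_{1,n}m$, where $\varphi_{1,n}:\mathbb{N}\rightarrow\mathbb{N}$ is the unique embedding carrying $1$ to $n$ (Definition \ref{definition integral multiple}).

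So I would fix $a\in M$ and $n\in\mathbb{N}$ and introduce the three relevant embeddings supplied by Theorem \ref{embedding discrete magnitude}: $\varphi_{1,a}:\mathbb{N}\rightarrow M$ sending $1$ to $a$, $\varphi_{1,n}:\mathbb{N}\rightarrow\mathbb{N}$ sending $1$ to $n$, and $\varphi_{1,na}:\mathbb{N}\rightarrow M$ sending $1$ to $na$. By Theorem \ref{composition of homomophisms} the composition $\varphi_{1,a}\varphi_{1,n}:\mathbb{N}\rightarrow M$ is again an embedding, and it sends $1$ to $\varphi_{1,a}\left(\varphi_{1,n}1\right)=\varphi_{1,a}n=na$ by Definitions \ref{definition composition} and \ref{definition integral multiple}. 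Thus $\varphi_{1,a}\varphi_{1,n}$ and $\varphi_{1,na}$ are two embeddings of $\mathbb{N}$ into $M$ taking $1$ to the same element, so $\varphi_{1,a}\varphi_{1,n}=\varphi_{1,na}$ by the uniqueness in Theorem \ref{embedding discrete magnitude}.

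The conclusion is then a single chain of definitional rewrites:
\[
(mn)a=\varphi_{1,a}(mn)=\varphi_{1,a}\left(\varphi_{1,n}m\right)=\left(\varphi_{1,a}\varphi_{1,n}\right)m=\varphi_{1,na}m=m(na),
\]
where the outer equalities are Definition \ref{definition integral multiple} and the middle ones are Definitions \ref{definition composition} and \ref{definition equal functions}. (Equivalently, one may simply cite Theorem \ref{multiples of morphisms} with $\chi=\varphi_{1,a}$, which gives $\varphi_{1,a}(mn)=m\left(\varphi_{1,a}n\right)$ outright, that theorem being exactly this composition argument already packaged.)

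I do not anticipate any real obstacle; the only care needed is bookkeeping — keeping straight that $\varphi_{1,n}$ maps $\mathbb{N}$ into $\mathbb{N}$ while $\varphi_{1,a}$ and $\varphi_{1,na}$ map $\mathbb{N}$ into $M$, and that $mn$ is itself an integral multiple taken inside $\mathbb{N}$. If one wished to avoid composition entirely, an induction on $m$ via Theorem \ref{induction} also works: the base case $m=1$ is $1\cdot n=n$ together with $1(na)=na$, and the inductive step uses $(m+1)n=mn+n$ (from Proposition \ref{V 2} applied in $\mathbb{N}$ to the element $n$), then Proposition \ref{V 2} applied in $M$ to $a$, then Theorem \ref{1a and (n+1)a}.
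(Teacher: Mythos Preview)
Your proof is correct and is essentially the paper's argument, just unpacked one level. The paper's proof is exactly the shortcut you mention parenthetically: it fixes $a$, invokes $\varphi_{1,a}$, and then in a three-line chain writes $(mn)a=\varphi_{1,a}(mn)=m(\varphi_{1,a}n)=m(na)$, citing Theorem~\ref{multiples of morphisms} for the middle step rather than redoing the composition-plus-uniqueness argument you spell out.
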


\begin{proof}
Let $a$ be fixed. There exists a unique embedding $\varphi_{1,a}$ of $%
\mathbb{N}
$ into $M$ which maps $1$ into $a$ by Theorem
\ref{embedding discrete magnitude} and%
\begin{align*}
\left(  mn\right)  a  & =\varphi_{1,a}\left(  mn\right)  \text{ (Definition
\ref{definition integral multiple})}\\
& =m\left(  \varphi_{1,a}n\right)  \text{ (Theorem
\ref{multiples of morphisms})}\\
& =m\left(  na\right)  \text{. (Definition \ref{definition integral multiple}%
)}%
\end{align*}

\end{proof}

\begin{proposition}
\label{V 4}If $a:b=a^{\prime}:b^{\prime}$, then $ja:kb=ja^{\prime}:kb^{\prime
}$.
\end{proposition}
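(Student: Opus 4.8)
The plan is to unwind both the definition of ``same ratio'' (Definition \ref{revised euclid def of ratios}) and the associativity of integral multiples (Proposition \ref{V 3}). To show $ja:kb = ja^{\prime}:kb^{\prime}$, by Definition \ref{revised euclid def of ratios} I need to show that for \emph{every} pair $m,n\in\mathbb{N}$, the magnitude $m(ja)$ has to $n(kb)$ the same relation ($<$, $=$, or $>$) as $m(ja^{\prime})$ has to $n(kb^{\prime})$.

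First I would fix arbitrary $m,n\in\mathbb{N}$ and apply Proposition \ref{V 3} four times to rewrite
\[
m(ja)=(mj)a,\quad n(kb)=(nk)b,\quad m(ja^{\prime})=(mj)a^{\prime},\quad n(kb^{\prime})=(nk)b^{\prime}.
\]
Since $mj$ and $nk$ are themselves natural numbers, the hypothesis $a:b=a^{\prime}:b^{\prime}$, via Definition \ref{revised euclid def of ratios} applied to the particular pair $mj,nk$, tells me that $(mj)a$ has to $(nk)b$ the same relation as $(mj)a^{\prime}$ has to $(nk)b^{\prime}$. Combining this with the four rewritings gives exactly that $m(ja)$ has to $n(kb)$ the same relation as $m(ja^{\prime})$ has to $n(kb^{\prime})$.

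Because $m$ and $n$ were arbitrary, Definition \ref{revised euclid def of ratios} then yields $ja:kb = ja^{\prime}:kb^{\prime}$, which is the claim. There is no real obstacle here: the only thing to be careful about is matching the exact form in which Proposition \ref{V 3} is stated, namely $(mn)a = m(na)$, so that the substitutions $m(ja)=(mj)a$ etc.\ are justified in the correct direction, and making sure the hypothesis is invoked with the composite multipliers $mj$ and $nk$ rather than with $m$ and $n$ alone.
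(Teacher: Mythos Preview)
Your proposal is correct and matches the paper's own proof essentially step for step: both reduce the comparison of $m(ja)$ with $n(kb)$ to that of $(mj)a$ with $(nk)b$ via Proposition~\ref{V 3}, invoke the hypothesis $a:b=a^{\prime}:b^{\prime}$ for the composite multipliers $mj,nk$, and then pass back via Proposition~\ref{V 3} again. The only cosmetic difference is that the paper presents this as a chain of implications for the case $>$ and then remarks that $=$ and $<$ are handled identically, whereas you bundle all three relations at once.
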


\begin{proof}
If $a:b=a^{\prime}:b^{\prime}$, then for any two numbers $m$ and $n$%
\begin{align*}
m\left(  ja\right)  >n\left(  kb\right)   & \implies\left(  mj\right)
a>\left(  nk\right)  b\text{ (Proposition \ref{V 3})}\\
& \implies\left(  mj\right)  a^{\prime}>\left(  nk\right)  b^{\prime}\text{
(Definition \ref{revised euclid def of ratios})}\\
& \implies m\left(  ja^{\prime}\right)  >n\left(  kb^{\prime}\right)  \text{.
(Proposition \ref{V 3}).}%
\end{align*}
And the same argument applies with
$>$
replaced by = or
$<$%
$.$ Therefore $ja:kb=ja^{\prime}:kb^{\prime}$ according to Definition
\ref{revised euclid def of ratios}.
\end{proof}

\begin{proposition}
\label{V 5}If $a>b$, then $na>nb$ and $na-nb=n\left(  a-b\right)  $. And, more
generally, $na$ has to $nb$ the same relation (%
$<$%
, =, or
$>$%
) as $a$ has to $b$.
\end{proposition}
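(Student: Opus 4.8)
The plan is to reduce both assertions to Proposition \ref{V 1} together with Theorem \ref{order preserving functions}. First I would dispose of the case $a>b$ directly. Since $a>b$, Definition \ref{definition subtraction} gives $a=b+\left(  a-b\right)  $, and hence
\begin{align*}
na  & =n\left(  b+\left(  a-b\right)  \right) \\
& =nb+n\left(  a-b\right)
\end{align*}
by Proposition \ref{V 1}. The second line together with Definition \ref{definition <} yields $na>nb$, and together with Definition \ref{definition subtraction} it yields $na-nb=n\left(  a-b\right)  $.

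For the more general statement I would fix $n$ and introduce the function $\psi:M\rightarrow M$ defined by $\psi a=na$; this is well defined since $na=\varphi_{1,a}n$ is a definite element of $M$ for each $a\in M$. What the first part establishes is precisely that $\psi$ is order-preserving: if $b<a$, then $\psi b=nb<na=\psi a$. Since the $<$ relation on $M$ is trichotomous by Theorem \ref{< is trichotomous}, Theorem \ref{order preserving functions} now applies to $\psi$ and gives that $\psi a=na$ has to $\psi b=nb$ the same relation ($<$, $=$, or $>$) as $a$ has to $b$, which is exactly what is claimed.

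I do not expect a genuine obstacle here. The only points requiring a little care are citing Proposition \ref{V 1} in the shape $n\left(  b+\left(  a-b\right)  \right)  =nb+n\left(  a-b\right)  $ rather than reproving distributivity of the multiple over $+$, and verifying the hypotheses of Theorem \ref{order preserving functions}: here the domain and codomain are both the single magnitude space $M$, whose order is trichotomous (with inverse $>$) by Theorem \ref{< is trichotomous}. Everything else collapses to a one-line appeal to the appropriate definition.
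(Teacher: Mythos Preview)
Your argument is correct. The only difference from the paper's proof is one of packaging: the paper observes at the outset that the map $\varphi:M\to M$, $\varphi a=na$, is a \emph{homomorphism} (by Proposition~\ref{V 1}), and then simply invokes Theorems~\ref{morphisms monotonic} and~\ref{morphisms 1-1}, which already contain both conclusions for arbitrary homomorphisms. You instead prove the first conclusion by hand via $na=n(b+(a-b))=nb+n(a-b)$ and then appeal to Theorem~\ref{order preserving functions} for the trichotomy statement. In effect you have inlined the proofs of Theorems~\ref{morphisms monotonic} and~\ref{morphisms 1-1} for this particular $\varphi$; the paper's route is shorter and highlights that the result is a special case of the general homomorphism machinery, while yours is more self-contained and makes the role of Proposition~\ref{V 1} explicit.
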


\begin{proof}
Fix $n$ and let $\varphi:M\rightarrow M$ be the function defined by $\varphi
a=na$. Then $\varphi\left(  a+b\right)  =\varphi a+$ $\varphi b$ for any $a$
and $b$ by Proposition \ref{V 1} and hence $\varphi$ is a homomorphism
according to Definition \ref{definition homomorphism}. Hence Theorems
\ref{morphisms monotonic} and \ref{morphisms 1-1} apply.
\end{proof}

\begin{proposition}
\label{V 6}If $m>n$, then $ma>na$ and $ma-na=(m-n)a$. And, more generally,
$ma$ has to $na$ the same relation (%
$<$%
, =, or
$>$%
) as $m$ has to $n$.
\end{proposition}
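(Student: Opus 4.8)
The plan is to mirror the strategy of Proposition \ref{V 5}, but with the roles of the magnitude element and the natural number exchanged. Whereas V.5 fixed the multiplier $n$ and viewed $a \mapsto na$ as an endomorphism of $M$, here I would fix the magnitude $a \in M$ and view the multiplier map $m \mapsto ma$ as a homomorphism \emph{out of} $\mathbb{N}$. Concretely, fix $a$ and recall from Definition \ref{definition integral multiple} that $ma = \varphi_{1,a}m$, where $\varphi_{1,a}:\mathbb{N}\rightarrow M$ is the unique embedding sending $1$ to $a$ guaranteed by Theorem \ref{embedding discrete magnitude}. In particular $\varphi_{1,a}$ is already known to be an embedding, so no extra verification (of the homomorphism property, as in V.5 via Proposition \ref{V 1}) is needed.

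With that observation in hand, the two assertions are immediate consequences of the general facts about embeddings proved earlier. First, if $m>n$, then Theorem \ref{morphisms monotonic} applied to $\varphi_{1,a}$ gives $\varphi_{1,a}m > \varphi_{1,a}n$ and $\varphi_{1,a}m - \varphi_{1,a}n = \varphi_{1,a}(m-n)$, which is precisely $ma > na$ and $ma - na = (m-n)a$ after unwinding Definition \ref{definition integral multiple}. Here $m-n$ is a legitimate element of $\mathbb{N}$ by Definition \ref{definition subtraction}, since $\mathbb{N}$ is itself a magnitude space and $n < m$. Second, the ``more generally'' clause is exactly Theorem \ref{morphisms 1-1} applied to $\varphi_{1,a}$: $\varphi_{1,a}m = ma$ has to $\varphi_{1,a}n = na$ the same relation ($<$, $=$, or $>$) as $m$ has to $n$.

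I do not anticipate any genuine obstacle; the entire content of the proposition has been front-loaded into Theorems \ref{morphisms monotonic}, \ref{morphisms 1-1}, and \ref{embedding discrete magnitude}, together with the definition of integral multiple. If anything, the only point requiring a moment's care is recognizing that the relevant homomorphism is a map from $\mathbb{N}$ rather than a map of $M$ to itself — i.e. that the ``multiplier'' $m$, not the ``multiplicand'' $a$, should be the variable — and that this map is nothing other than $\varphi_{1,a}$, so its being an embedding is not something to prove but something already supplied. One could optionally remark, as the surrounding text does for the companion result, that this also recovers $ma > na$ directly from Theorem \ref{Translation by a} and Theorem \ref{1a and (n+1)a} by induction, but the embedding argument is shorter and matches the book's structure.
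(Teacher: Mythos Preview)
Your proposal is correct and follows essentially the same route as the paper: fix $a$, regard $n\mapsto na$ as a homomorphism $\mathbb{N}\to M$, and invoke Theorems \ref{morphisms monotonic} and \ref{morphisms 1-1}. The only difference is cosmetic---the paper re-verifies the homomorphism identity via Proposition \ref{V 2} rather than citing directly that the map is $\varphi_{1,a}$, but since Proposition \ref{V 2} was itself proved from the embedding property of $\varphi_{1,a}$, your shortcut is entirely in keeping with the text.
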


\begin{proof}
Fix $a$ and let $\varphi:%
\mathbb{N}
\rightarrow M$ be the function defined by $\varphi n=na$. Then $\varphi\left(
m+n\right)  =\varphi m+$ $\varphi n$ for any $m$ and $n$ by Proposition
\ref{V 2} and hence $\varphi$ is a homomorphism according to Definition
\ref{definition homomorphism}. Hence Theorems \ref{morphisms monotonic} and
\ref{morphisms 1-1} apply.
\end{proof}

\begin{proposition}
\label{V 7}If $a=b$, then $a:c=b:c$ and $c:a=c:b.$
\end{proposition}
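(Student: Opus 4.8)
The plan is to reduce the whole proposition to a single observation: equal elements have equal integral multiples. Once that is in hand, both assertions fall out of Definition \ref{revised euclid def of ratios} with no real work.

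First I would establish that if $a=b$ then $ma=mb$ for every $m\in\mathbb{N}$. This is immediate from Definition \ref{definition integral multiple}: the integral multiple $ma$ is $\varphi_{1,a}m$, where $\varphi_{1,a}$ is the unique embedding of $\mathbb{N}$ into $M$ carrying $1$ to $a$. Since $a=b$, the description ``the unique embedding carrying $1$ to $a$'' and ``the unique embedding carrying $1$ to $b$'' name the same function — formally $\varphi_{1,a}=\varphi_{1,b}$ by the uniqueness clause of Theorem \ref{embedding discrete magnitude} — so $ma=\varphi_{1,a}m=\varphi_{1,b}m=mb$.

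For the claim $a:c=b:c$, I would fix arbitrary natural numbers $m$ and $n$. By the paragraph above $ma=mb$, so $ma$ has to $nc$ exactly the same relation ($<$, $=$, or $>$) as $mb$ has to $nc$; indeed these are literally the same comparison. Since $m$ and $n$ were arbitrary, $a:c=b:c$ according to Definition \ref{revised euclid def of ratios}. The claim $c:a=c:b$ is symmetric: fixing $m,n$, we have $na=nb$, hence $mc$ has to $na$ the same relation as $mc$ has to $nb$, and Definition \ref{revised euclid def of ratios} again gives $c:a=c:b$.

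There is essentially no obstacle in this proposition; the only point that warrants being spelled out is the step $a=b\implies\varphi_{1,a}=\varphi_{1,b}$, which is exactly the uniqueness asserted in Theorem \ref{embedding discrete magnitude}, and everything else is a direct unwinding of the definition of ``same ratio.''
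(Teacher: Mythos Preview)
Your proposal is correct and follows essentially the same route as the paper: establish $ma=mb$ from $a=b$, then read off both ratio equalities directly from Definition~\ref{revised euclid def of ratios}. The only cosmetic difference is that the paper cites Proposition~\ref{V 5} for the step $a=b\implies ma=mb$, whereas you unwind Definition~\ref{definition integral multiple} and invoke the uniqueness in Theorem~\ref{embedding discrete magnitude}; both justifications are immediate and the overall argument is the same.
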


\begin{proof}
If $a=b$, then $ma=mb$ by Proposition \ref{V 5}. Thus for any $m$ and $n$,
$ma$ has the same relation to $nc$ (%
$<$%
, =, or
$>$%
) as $mb$ has to $nc$. Therefore $a:c=b:c$ according to Definition
\ref{revised euclid def of ratios}. A similar argument shows that $c:a=c:b$.
\end{proof}

\begin{proposition}
\label{V 8}If $a>b$, then $a:c>b:c$ and $c:b>c:a$.
\end{proposition}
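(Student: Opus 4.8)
The plan is to read off from Definition \ref{revised euclid def of ratios} exactly what must be exhibited: for $a:c>b:c$ we need natural numbers $m,n$ with $ma>nc$ and $mb\leq nc$; for $c:b>c:a$ we need natural numbers $m,n$ with $mc>nb$ and $mc\leq na$. In each case the idea is the same classical one: first scale up the strict inequality $a>b$ by a multiple until the resulting ``gap'' overshoots $c$, and then slide a suitable multiple of $c$ into that gap, the existence of the right multiple being furnished by the well-ordering of $\mathbb{N}$.

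For $a:c>b:c$: since $a>b$, the magnitude $a-b$ is defined (Definition \ref{definition subtraction}), and since $M$ is Archimedean, Theorem \ref{some n such that na>b} produces an $m$ with $m(a-b)>c$; by Proposition \ref{V 5} this says $ma-mb>c$, i.e. $ma>mb+c$. The set of $n\in\mathbb{N}$ with $nc>mb$ is nonempty (Theorem \ref{some n such that na>b} again), so it has a least element $n$ (Definition \ref{definition well ordered}). If $n=1$, then $mb<c=nc$, and $ma>mb+c>c=nc$ since $c<c+mb$ by Definition \ref{definition <}; so $m,n$ are the required witnesses. If $n>1$, then $n-1\in\mathbb{N}$ and minimality of $n$ forces $(n-1)c\leq mb$ by trichotomy; hence
\[
nc=(n-1)c+c\leq mb+c<ma
\]
by Theorems \ref{1a and (n+1)a}, \ref{equals added to equals or unequals}, and \ref{Transitivity mixed}, while $mb<nc$ by the choice of $n$, so again $m,n$ are the witnesses. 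Either way, Definition \ref{revised euclid def of ratios} gives $a:c>b:c$.

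For $c:b>c:a$ the argument is the mirror image, with the roles of $m$ and $n$ interchanged: use Theorem \ref{some n such that na>b} to pick $n$ with $n(a-b)>c$, whence $na>nb+c$ by Proposition \ref{V 5}; then let $m$ be the least natural number with $mc>nb$. If $m=1$, then $mc=c>nb$ and $mc=c<na$ because $na>nb+c>c$. If $m>1$, then $(m-1)c\leq nb$ by minimality, so $mc=(m-1)c+c\leq nb+c<na$ by Theorems \ref{1a and (n+1)a}, \ref{equals added to equals or unequals}, and \ref{Transitivity mixed}. In either case $mc>nb$ and $mc\leq na$, so Definition \ref{revised euclid def of ratios} gives $c:b>c:a$.

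The step I expect to need the most care is the bookkeeping around the minimal multiple: verifying that the relevant set of natural numbers is nonempty before invoking well-ordering, isolating the $n=1$ (resp.\ $m=1$) edge case in which ``$n-1$'' is not available as a natural number, and keeping $\leq$ and $<$ straight so that both clauses of Definition \ref{revised euclid def of ratios} are met exactly. Everything else is routine once Theorem \ref{some n such that na>b} and Proposition \ref{V 5} are in hand.
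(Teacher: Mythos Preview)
Your proof is correct and follows the same classical strategy as the paper: use the Archimedean property (Theorem \ref{some n such that na>b}) to choose $m$ with $m(a-b)>c$, then well-ordering of $\mathbb{N}$ to pick the least $n$ with $nc>mb$, and verify that $m,n$ witness $a:c>b:c$. Your explicit separation of the $n=1$ case is more careful than the paper, which runs the contradiction argument through $(n-1)c$ without comment.

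The one point worth noting is that you redo the whole construction for $c:b>c:a$, whereas the paper simply reuses the \emph{same} $m,n$: once you have established $ma>nc$ and $nc>mb$, you already have $nc>mb$ together with $nc\leq ma$, and by Definition \ref{revised euclid def of ratios} (reading $n$ as the first multiplier and $m$ as the second) this is precisely the statement $c:b>c:a$. So your second paragraph, while correct, is unnecessary work.
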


\begin{proof}
Assume $a>b$. Since magnitude spaces are assumed to be Archimedean, there is
some $m$ such that $m\left(  a-b\right)  >c$ by Theorem
\ref{some n such that na>b}. And from Proposition \ref{V 5}, $ma>mb$ and
$m\left(  a-b\right)  =ma-mb $. Therefore $ma-mb>c$. And $ma>mb+c$ by Theorem
\ref{moving b accross <=>}.

Likewise, the set $A$ of all natural numbers $k$ such that $kc>mb$ is nonempty
by Theorem \ref{some n such that na>b}. And since the natural numbers are well
ordered by Definition \ref{definition natural numbers}, $A$ has a smallest
element by Definition \ref{definition well ordered}. Let $n$ be the smallest
element of $A$.

I say $ma>nc$. Suppose, on the contrary, that $nc\geq ma$. Then, since it was
shown that $ma>mb+c$, it follows that $nc>mb+c$ by Theorem
\ref{Transitivity mixed} and%
\begin{align*}
nc>mb+c  & \implies nc>mb+1c\text{ (Definition
\ref{definition integral multiple})}\\
& \implies nc-1c>mb\text{ (Theorem \ref{moving b accross <=>})}\\
& \implies\left(  n-1\right)  c>mb\text{ (Proposition \ref{V 5})}\\
& \implies\left(  n-1\right)  \in A\text{. (definition of }A\text{)}%
\end{align*}
But $\left(  n-1\right)  <n$ by Theorem \ref{b-a < a} and since $n$ is the
smallest element of $A$, $n$ is a lower bound of $A$ by Definition
\ref{definition bounds}, and hence $\left(  n-1\right)  \notin A$ by Theorem
\ref{element less than lower bound} which is a contradiction. Therefore,
indeed, $ma>nc$.

And $n$ was chosen so that $nc>mb$. Therefore, $ma>nc$ and $mb\leq nc$ by
Definition \ref{definition <= and >=} and hence $a:c>b:c$ according to
Definition \ref{revised euclid def of ratios}.

Likewise, $nc>mb$ and $nc\leq ma$ by Definition \ref{definition <= and >=} and
hence $c:b>c:a$\ according to Definition \ref{revised euclid def of ratios}.
\end{proof}

\begin{proposition}
\label{V 9}If $a:c=b:c$, then $a=b$. And if $c:a=c:b$ then $a=b$.
\end{proposition}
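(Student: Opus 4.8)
The plan is to argue by contradiction, reducing to Proposition \ref{V 8}. Suppose $a:c = b:c$ but $a \neq b$. By Theorem \ref{< is trichotomous} (trichotomy of $<$ in $M$) we then have $a > b$ or $b > a$. If $a > b$, Proposition \ref{V 8} gives $a:c > b:c$; if $b > a$, the same proposition (with the roles of $a$ and $b$ interchanged) gives $b:c > a:c$, that is $a:c < b:c$. In either case I obtain a ratio relation that I will show is incompatible with $a:c = b:c$, a contradiction, forcing $a = b$. The second assertion, that $c:a = c:b$ implies $a = b$, is handled identically, using instead the companion conclusions $c:b > c:a$ (when $a > b$) and $c:a > c:b$ (when $b > a$) of Proposition \ref{V 8}.

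The only point requiring comment is that the relations ``$=$'' and ``$>$'' between ratios of Definition \ref{revised euclid def of ratios} are mutually exclusive. Indeed, $a:c = b:c$ means that for all $m, n$ the element $ma$ has to $nc$ the same relation ($<$, $=$, or $>$) as $mb$ has to $nc$; in particular no pair $m,n$ satisfies both $ma > nc$ and $mb \le nc$. But $a:c > b:c$ asserts exactly the existence of such a pair. Hence $a:c = b:c$ and $a:c > b:c$ cannot both hold, and exchanging the two ratios shows $a:c = b:c$ and $a:c < b:c$ cannot both hold either. Each branch of the case analysis above therefore ends in a contradiction.

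I do not anticipate any real obstacle: the argument is a short deduction from Proposition \ref{V 8} once the mutual-exclusivity remark is in place, and that remark is immediate from unwinding Definition \ref{revised euclid def of ratios}. Alternatively one could avoid the remark entirely by observing that the proof of Proposition \ref{V 8} produces explicit $m, n$ with $ma > nc$ and $mb \le nc$; feeding these same $m,n$ into the hypothesis $a:c = b:c$ forces $mb > nc$, contradicting $nc \ge mb$ via Theorem \ref{< is trichotomous}.
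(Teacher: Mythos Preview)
Your argument is correct and follows essentially the same route as the paper: assume $a\neq b$, apply trichotomy, and in each case invoke Proposition~\ref{V 8} to obtain a ratio inequality incompatible with the hypothesis. The paper's version simply asserts the incompatibility (``not consistent with the assumption'') where you spell out from Definition~\ref{revised euclid def of ratios} why equality and strict inequality of ratios are mutually exclusive, which is a welcome clarification.
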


\begin{proof}
Assume $a:c=b:c$. If $a>b$, then $a:c>b:c$ by the preceding theorem. But this
is not consistent with the assumption. And for the same reason $b>a$ is not
consistent with the assumption. Therefore $a=b$. The second part of the
theorem is proved in a similar manner.
\end{proof}

\begin{proposition}
\label{V 10}If $a:c>b:c$, then $a>b$. And if $c:a>c:b$, then $b>a$.
\end{proposition}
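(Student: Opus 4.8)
The statement is essentially the converse of Proposition \ref{V 8}, and the natural strategy is the same one used to derive Proposition \ref{V 9} from Proposition \ref{V 8}: argue by trichotomy on the magnitudes $a$ and $b$, ruling out the two unwanted cases by invoking the already-proved monotonicity results about ratios. So for the first claim I would assume $a:c>b:c$ and consider the three mutually exclusive possibilities $a<b$, $a=b$, $a>b$ given by trichotomy (Theorem \ref{< is trichotomous}). The goal is to show the first two lead to contradictions with the hypothesis, leaving $a>b$.

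The case $a=b$ is dispatched immediately: by Proposition \ref{V 7} we would have $a:c=b:c$, which is incompatible with $a:c>b:c$ (a pair of magnitudes cannot simultaneously have the same ratio and a greater ratio — this follows from the definitions, since ``same ratio'' forbids the existence of $m,n$ with $ma>nc$ and $mb\le nc$, whereas ``greater ratio'' asserts exactly such $m,n$ exist). The case $a<b$ is handled by Proposition \ref{V 8} applied with the roles reversed: from $b>a$ we get $b:c>a:c$, i.e. $a:c<b:c$, which again contradicts $a:c>b:c$ by trichotomy of the ratio-ordering — or, more elementarily, because $b:c>a:c$ and $a:c>b:c$ would each assert the existence of witnesses $m,n$ that violate the other. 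Hence $a>b$, proving the first claim. For the second claim, ``if $c:a>c:b$ then $b>a$,'' I would run the identical argument using the second halves of Propositions \ref{V 7} and \ref{V 8}: $a=b$ gives $c:a=c:b$ (contradiction), and $a>b$ gives $c:b>c:a$, i.e. $c:a<c:b$ (contradiction), so $b>a$.

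The main obstacle — such as it is — is making precise the claim that $a:b>a':b'$ and $a:b<a':b'$ (equivalently $a':b'>a:b$) cannot both hold, and that neither is compatible with $a:b=a':b'$. The paper has not stated a trichotomy theorem for the ratio ordering, so I should not cite one; instead I would unfold Definition \ref{revised euclid def of ratios} directly. If $a:c=b:c$ then for all $m,n$ the relation of $ma$ to $nc$ matches that of $mb$ to $nc$, so there are no $m,n$ with $ma>nc$ and $mb\le nc$; but $a:c>b:c$ asserts precisely that such $m,n$ exist — contradiction. Similarly, $a:c>b:c$ produces $m,n$ with $ma>nc$, $mb\le nc$, while $b:c>a:c$ produces $j,k$ with $jb>kc$, $ja\le kc$; combining these with Proposition \ref{V 3} (to compare $mj$-fold and $nk$-fold multiples) and transitivity yields $mja>nkc$ and simultaneously $mja\le nkc$ — contradiction. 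These are short, routine unfoldings, so I expect the whole proof to be only a few lines, closely mirroring the proof of Proposition \ref{V 9}.
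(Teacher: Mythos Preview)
Your proof is correct, but the paper takes a markedly more direct route (as it flags earlier, following Simson rather than Euclid). The paper simply unfolds Definition \ref{revised euclid def of ratios}: from $a:c>b:c$ there exist $m,n$ with $ma>nc$ and $mb\le nc$; then $ma>nc\ge mb$ gives $ma>mb$ by Theorem \ref{Transitivity mixed}, and Proposition \ref{V 5} yields $a>b$. That is the whole argument.

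Your approach---trichotomy on $a,b$, then Propositions \ref{V 7} and \ref{V 8} to eliminate $a=b$ and $a<b$---is essentially Euclid's original strategy, and it does work. But notice the irony: the ``main obstacle'' you identify, showing that $a:c>b:c$ and $b:c>a:c$ cannot coexist, is most cleanly handled by the paper's direct argument itself (each inequality yields $a>b$, respectively $b>a$, by the three-line computation above). Your alternative via Proposition \ref{V 3}---deriving both $mk<jn$ and $jn<mk$ from the two sets of witnesses---is valid but more laborious than you suggest. So your route is sound but wraps the direct argument (or a harder substitute for it) inside an unnecessary trichotomy shell; the paper's version cuts straight to the conclusion.
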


\begin{proof}
Assume $a:c>b:c$. Then there are numbers $m$ and $n$ such that $ma>nc$ and
$mb\leq nc$ by Definition \ref{revised euclid def of ratios}. And from $ma>nc$
and $nc\geq mb$ follows $ma>mb$ by Theorem \ref{Transitivity mixed}.
Therefore, $a>b$ by Proposition \ref{V 5}. The second part of the theorem is
proved in the same way.
\end{proof}

\begin{proposition}
\label{V 11}If $a:b=a^{\prime}:b^{\prime}$ and $a^{\prime\prime}%
:b^{\prime\prime}=a^{\prime}:b^{\prime}$, then $a:b=a^{\prime\prime}%
:b^{\prime\prime}$.
\end{proposition}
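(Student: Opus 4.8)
The plan is to unwind Definition~\ref{revised euclid def of ratios} directly. The relation ``has the same ratio as'' is \emph{defined} to mean that the two pairs induce the same trichotomy outcome ($<$, $=$, or $>$) for every choice of multipliers $m,n\in\mathbb{N}$; since ``inducing the same one of the three outcomes'' is plainly transitive (equality among the three labels $<,=,>$ is transitive), the proposition should follow with essentially no computation.

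First I would fix arbitrary natural numbers $m$ and $n$. From the hypothesis $a:b=a^{\prime}:b^{\prime}$ and Definition~\ref{revised euclid def of ratios}, $ma$ has to $nb$ the same relation ($<$, $=$, or $>$) as $ma^{\prime}$ has to $nb^{\prime}$. From the hypothesis $a^{\prime\prime}:b^{\prime\prime}=a^{\prime}:b^{\prime}$ and the same definition, $ma^{\prime\prime}$ has to $nb^{\prime\prime}$ the same relation as $ma^{\prime}$ has to $nb^{\prime}$. Chaining these: whichever of the three relations $ma^{\prime}$ bears to $nb^{\prime}$, that same relation is the one $ma$ bears to $nb$ and also the one $ma^{\prime\prime}$ bears to $nb^{\prime\prime}$; hence $ma$ has to $nb$ the same relation as $ma^{\prime\prime}$ has to $nb^{\prime\prime}$. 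Because $m$ and $n$ were arbitrary, this holds for all $m,n\in\mathbb{N}$, and therefore $a:b=a^{\prime\prime}:b^{\prime\prime}$ by Definition~\ref{revised euclid def of ratios}.

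I do not anticipate any genuine obstacle: the proposition is transitivity of a relation that was defined pointwise over all multiplier pairs in terms of equality of a three-valued quantity, so the proof is a one-line chase once the definition is expanded. The only point requiring care is the phrasing — making explicit that ``$X$ has to $Y$ the same relation as $Z$ has to $W$'' behaves exactly like an equality (it is reflexive, symmetric, and transitive as a relation on ordered pairs, being literally determined by which of $<$, $=$, $>$ holds of the first entry against the second), so that the two given instances may legitimately be composed for each fixed $m,n$.
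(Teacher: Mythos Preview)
Your proposal is correct and matches the paper's own proof: both simply unwind Definition~\ref{revised euclid def of ratios} for arbitrary $m,n$ and chain the two hypotheses through the pair $(a',b')$. The paper phrases it as the implication chain $ma>nb \implies ma'>nb' \implies ma''>nb''$ (and similarly for $=$ and $<$), which is exactly your ``same relation'' argument written one case at a time.
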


\begin{proof}
If $a:b=a^{\prime}:b^{\prime}$ and $a^{\prime\prime}:b^{\prime\prime
}=a^{\prime}:b^{\prime}$, and $m$ and $n$ are any two numbers%
\begin{align*}
ma>nb  & \implies ma^{\prime}>nb^{\prime}\text{ (Definition
\ref{revised euclid def of ratios})}\\
& \implies ma^{\prime\prime}>nb^{\prime\prime}\text{. (Definition
\ref{revised euclid def of ratios})}%
\end{align*}
And the same argument applies with
$>$
replaced by = or
$<$%
$.$ Therefore $a:b=a^{\prime\prime}:b^{\prime\prime}$ by Definition
\ref{revised euclid def of ratios}.
\end{proof}

\begin{proposition}
\label{V 12}If $a:b=c:d$ then $a:b=\left(  a+c\right)  :\left(  b+d\right)  $.
\end{proposition}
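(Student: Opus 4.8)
The plan is to unwind Definition~\ref{revised euclid def of ratios} and reduce the claim to combining relations termwise. Fix arbitrary $m,n\in\mathbb{N}$. By Proposition~\ref{V 1} we have $m(a+c)=ma+mc$ and $n(b+d)=nb+nd$, so it suffices to show that $ma+mc$ has to $nb+nd$ the same relation ($<$, $=$, or $>$) as $ma$ has to $nb$. From the hypothesis $a:b=c:d$ and Definition~\ref{revised euclid def of ratios}, $mc$ has to $nd$ the same relation as $ma$ has to $nb$; call this common relation $R$.

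Next I would dispatch the three possibilities for $R$ using an intermediate term. By Theorem~\ref{equals added to equals or unequals} (together with commutativity to put the shared summand in front), $ma+mc$ has to $nb+mc$ the relation $R$ (since $ma$ has to $nb$ the relation $R$), and $nb+mc$ has to $nb+nd$ the relation $R$ (since $mc$ has to $nd$ the relation $R$). If $R$ is $=$, then $ma+mc=nb+mc=nb+nd$. If $R$ is $<$, then $ma+mc<nb+mc<nb+nd$, so $ma+mc<nb+nd$ by transitivity (Theorem~\ref{Transitivity of >}); if $R$ is $>$, the symmetric chain gives $ma+mc>nb+nd$. In every case $ma+mc$ has to $nb+nd$ the relation $R$, i.e.\ the same relation as $ma$ has to $nb$.

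Finally, since $m$ and $n$ were arbitrary, $m(a+c)=ma+mc$ has to $n(b+d)=nb+nd$ the same relation as $ma$ has to $nb$ for all $m,n\in\mathbb{N}$, and therefore $a:b=(a+c):(b+d)$ by Definition~\ref{revised euclid def of ratios}. I do not expect a genuine obstacle here: the only mild care needed is treating the three relations uniformly (the strict cases requiring an appeal to transitivity) and recognizing at the outset that $m$ distributes over $a+c$ via Proposition~\ref{V 1}, which is what allows the hypothesis on $mc$ versus $nd$ to be brought to bear.
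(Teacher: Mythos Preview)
Your proof is correct and follows essentially the same route as the paper: distribute via Proposition~\ref{V 1}, use the hypothesis to transfer the relation from $ma,nb$ to $mc,nd$, and then combine through the intermediate term $nb+mc$ with Theorem~\ref{equals added to equals or unequals}. The paper treats only the case $>$ explicitly and remarks that the other relations are handled the same way, whereas you spell out all three cases and the transitivity step, but the argument is the same.
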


\begin{proof}
Assume $a:b=c:d$. If $ma>nb$, then $mc>nd$ by Definition
\ref{revised euclid def of ratios} and hence%
\begin{align*}
m\left(  a+c\right)   & =ma+mc\text{ (Proposition \ref{V 1})}\\
& >nb+mc\text{ (Theorem \ref{equals added to equals or unequals})}\\
& >nb+nd\text{ (Theorem \ref{equals added to equals or unequals})}\\
& =n\left(  b+d\right)  \text{. (Proposition \ref{V 1})}%
\end{align*}
Therefore $ma>nb\implies m\left(  a+c\right)  >n\left(  b+d\right)  $. And the
same argument applies with
$>$
replaced by = or
$<$%
$.$ Therefore $a:b=\left(  a+c\right)  :\left(  b+d\right)  $ by Definition
\ref{revised euclid def of ratios}.
\end{proof}

\begin{proposition}
\label{V 13}If $a:b=a^{\prime}:b^{\prime}$ and $a^{\prime}:b^{\prime
}>a^{\prime\prime}:b^{\prime\prime}$, then $a:b>a^{\prime\prime}%
:b^{\prime\prime}$. And if $a:b=a^{\prime}:b^{\prime}$ and $a^{\prime\prime
}:b^{\prime\prime}>a^{\prime}:b^{\prime}$, then $a^{\prime\prime}%
:b^{\prime\prime}>a:b$.
\end{proposition}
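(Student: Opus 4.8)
The plan is to simply unwind Definition \ref{revised euclid def of ratios} in both directions: a statement of the form $x:y = x':y'$ is a device for transporting every one of the comparisons $mx \lessgtr ny$ across to $mx' \lessgtr ny'$, while a statement of the form $x:y > x':y'$ merely asserts the existence of a single witnessing pair $m,n$ with $mx > ny$ and $mx' \leq ny'$. So each half of the proposition should come out by extracting such a witnessing pair and pushing the relevant comparison through the equality of ratios.

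For the first assertion, assume $a:b = a':b'$ and $a':b' > a'':b''$. The second hypothesis, via Definition \ref{revised euclid def of ratios}, supplies natural numbers $m,n$ with $ma' > nb'$ and $ma'' \leq nb''$. The first hypothesis says that $ma$ has to $nb$ the same relation ($<$, $=$, or $>$) as $ma'$ has to $nb'$; since $ma' > nb'$, this forces $ma > nb$. Now $ma > nb$ together with $ma'' \leq nb''$ is exactly what Definition \ref{revised euclid def of ratios} requires in order to conclude $a:b > a'':b''$.

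For the second assertion, assume $a:b = a':b'$ and $a'':b'' > a':b'$. This time the witnessing pair $m,n$ satisfies $ma'' > nb''$ and $ma' \leq nb'$. The only point that needs a little care is that $ma' \leq nb'$ means, by Definition \ref{definition <= and >=}, either $ma' < nb'$ or $ma' = nb'$; transporting each of these across $a:b = a':b'$ gives $ma < nb$ or $ma = nb$ respectively, so in either case $ma \leq nb$. Then $ma'' > nb''$ and $ma \leq nb$ give $a'':b'' > a:b$ by Definition \ref{revised euclid def of ratios}.

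I do not expect any genuine obstacle here; the argument is essentially a transitivity-type bookkeeping exercise. The one spot that warrants attention is the case split on $ma' \leq nb'$ in the second half, arising because the definition of "greater ratio" pairs a strict inequality on one side with a non-strict $\leq$ on the other, so one must verify the $\leq$ side survives the transport as a $\leq$ and not claim more than is true.
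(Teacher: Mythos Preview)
Your proof is correct and follows exactly the same approach as the paper: extract the witnessing pair $m,n$ from the strict-ratio inequality and transport the relevant comparison through the equality of ratios via Definition \ref{revised euclid def of ratios}. The paper dismisses the second half with ``proved in a similar manner,'' so your explicit case split on $ma' \leq nb'$ is, if anything, slightly more careful than the original.
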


\begin{proof}
Assume $a:b=a^{\prime}:b^{\prime}$ and $a^{\prime}:b^{\prime}>a^{\prime\prime
}:b^{\prime\prime}$. Then there are numbers $m$ and $n$ such that $ma^{\prime
}>nb^{\prime}$ and $ma^{\prime\prime}\leq nb^{\prime\prime}$ by Definition
\ref{revised euclid def of ratios}. And because $ma^{\prime}>nb^{\prime}$,
also $ma>nb$ by Definition \ref{revised euclid def of ratios}. Therefore
$ma>nb$ and $ma^{\prime\prime}\leq nb^{\prime\prime}$ and hence $a:b>a^{\prime
\prime}:b^{\prime\prime}$ by Definition \ref{revised euclid def of ratios}.
The second part of the theorem is proved in a similar manner.
\end{proof}

\begin{proposition}
\label{V 14}If $a:b=c:d$, then $a$ has the same relation to $c$ (%
$<$%
, =, or
$>$%
) as $b$ has to $d$.
\end{proposition}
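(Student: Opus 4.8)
The plan is to establish the three implications $a>c\implies b>d$, $a=c\implies b=d$, and $a<c\implies b<d$ separately, and then to recover the converses at once from trichotomy on each side --- the same device used in Theorem~\ref{order preserving functions}. Throughout I use the fact, immediate from Definition~\ref{revised euclid def of ratios}, that ``has the same ratio as'' is a symmetric relation.

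Suppose first $a>c$. Proposition~\ref{V 8} then gives $a:b>c:b$, and since $a:b=c:d$ we get $c:d>c:b$ by Proposition~\ref{V 13}. Applying the second half of Proposition~\ref{V 10} (``if $c:a>c:b$ then $b>a$'', here with $d$ in the role of its ``$a$'') yields $b>d$. The case $a<c$ follows by running this argument with the pairs $a,b$ and $c,d$ interchanged, which is legitimate because $c:d=a:b$: from $c>a$ it produces $d>b$, i.e.\ $b<d$. For the case $a=c$, Proposition~\ref{V 7} gives $a:b=c:b$, so $c:b=c:d$ by Proposition~\ref{V 11}, whence $b=d$ by the second half of Proposition~\ref{V 9}.

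With these three implications in hand, trichotomy (Theorem~\ref{< is trichotomous}) supplies the reverse directions: for instance $b>d$ is incompatible with both $a=c$ (which would force $b=d$) and $a<c$ (which would force $b<d$), so $b>d$ forces $a>c$; the other two converses follow the same way. Hence $a$ has to $c$ the same relation ($<$, $=$, or $>$) as $b$ has to $d$.

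I do not anticipate any real difficulty, since this is a bookkeeping argument resting entirely on Propositions~\ref{V 7}--\ref{V 13}. The one place to be careful is that Proposition~\ref{V 10} (like Propositions~\ref{V 8} and~\ref{V 13}) is stated in two forms, and the $a>c$ step must invoke the second form, ``if $c:a>c:b$ then $b>a$'', because in $c:d>c:b$ the common term is the antecedent rather than the consequent; the first form does not apply there.
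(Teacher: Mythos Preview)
Your proposal is correct and follows essentially the same route as the paper: the three forward implications are obtained from Propositions~\ref{V 7}--\ref{V 13} exactly as you describe (including the correct use of the second half of Proposition~\ref{V 10}), and the paper likewise handles $a<c$ by remarking that the same argument applies. Your explicit appeal to trichotomy for the converse directions merely makes precise what the paper leaves implicit.
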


\begin{proof}
Assume $a:b=c:d$. Then%
\begin{align*}
a>c  & \implies a:b>c:b\text{ (Proposition \ref{V 8})}\\
& \implies c:d>c:b\text{ (Proposition \ref{V 13})}\\
& \implies b>d\text{ (Proposition \ref{V 10})}%
\end{align*}
and by the same argument $a<c\implies b<d$. Likewise%
\begin{align*}
a=c  & \implies a:b=c:b\text{ (Proposition \ref{V 7})}\\
& \implies c:d=c:b\text{ (Proposition \ref{V 11})}\\
& \implies b=d\text{. (Proposition \ref{V 9})}%
\end{align*}

\end{proof}

\begin{proposition}
\label{V 15}$a:b=ka:kb$.
\end{proposition}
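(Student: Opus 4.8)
The plan is to prove the statement by induction on $k$, using the induction principle for well ordered magnitude spaces (Theorem \ref{induction}) applied to $\mathbb{N}$, with Proposition \ref{V 12} --- ``if $a:b=c:d$ then $a:b=(a+c):(b+d)$'' --- furnishing the inductive step. This follows Euclid's proof of V.15, in which the general ``sum of equimultiples'' assertion is obtained by iterating the two-ratio case.

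First I would fix $a,b\in M$ and let $A$ be the set of all $k\in\mathbb{N}$ for which $a:b=ka:kb$. For the base case I check that $1\in A$: since $1a=a$ and $1b=b$ (the remark following Theorem \ref{1a and (n+1)a}), the assertion $a:b=1a:1b$ reduces to $a:b=a:b$, which is immediate from Definition \ref{revised euclid def of ratios} because for every $m,n$ the magnitude $ma$ has to $nb$ the same relation ($<$, $=$, or $>$) as it has to itself.

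Next comes the inductive step. Assuming $k\in A$, so $a:b=ka:kb$, I apply Proposition \ref{V 12} with $c=ka$ and $d=kb$ to get $a:b=(a+ka):(b+kb)$. Then $a+ka=ka+a=(k+1)a$ by commutativity of $+$ (Definition \ref{definition magnitude}) together with Theorem \ref{1a and (n+1)a}, and likewise $b+kb=(k+1)b$; hence $a:b=(k+1)a:(k+1)b$, that is, $k+1\in A$. Since $1$ is the smallest element of the well ordered magnitude space $\mathbb{N}$, since $1\in A$, and since $k\in A$ implies $k+1\in A$, Theorem \ref{induction} gives $A=\mathbb{N}$, which is precisely the claim that $a:b=ka:kb$ for every natural number $k$.

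I do not expect a genuine obstacle. The only point deserving attention is that Proposition \ref{V 12} handles only two ratios at a time, so passing to an arbitrary multiple $k$ really does require the induction of Theorem \ref{induction} rather than a single invocation of \ref{V 12}; the remaining identifications ($1a=a$, $a+ka=(k+1)a$, and reflexivity of the equal-ratio relation) are routine. As an alternative one could note that $x\mapsto kx$ is a homomorphism from $M$ to $M$ by Proposition \ref{V 1}, hence an embedding by Theorem \ref{morphisms 1-1}, and then quote Theorem \ref{homomorphisms are proportional} to conclude $ka:kb=a:b$ directly; but the inductive argument stays closer to the development of this section.
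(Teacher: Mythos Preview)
Your proof is correct and matches the paper's own proof essentially step for step: induction via Theorem~\ref{induction}, base case from $1a=a$, $1b=b$, and inductive step via Proposition~\ref{V 12} together with Theorem~\ref{1a and (n+1)a}. The alternative you mention at the end (using Proposition~\ref{V 1} and Theorem~\ref{homomorphisms are proportional}) is also valid but is not the route the paper takes.
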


\begin{proof}
The theorem is true for $k=1$ since $1a=a$ and $1b=b$ by Definition
\ref{definition integral multiple}. Now assume the theorem is true for $k$.
Then $a:b=ka:kb$ and hence $\left(  ka+a\right)  :\left(  kb+b\right)  =a:b$
by Proposition \ref{V 12}. But $ka+a=\left(  k+1\right)  a$ and $kb+b=\left(
k+1\right)  b$ Theorem \ref{1a and (n+1)a}. Therefore$\left(  k+1\right)
a:\left(  k+1\right)  b=a:b$ and the theorem is true for $k+1$. Therefore the
theorem is true for all $k$ by Theorem \ref{induction}.
\end{proof}

\begin{proposition}
\label{V 16}If $a:b=c:d$, then $a:c=b:d$.
\end{proposition}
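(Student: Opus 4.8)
The plan is to unwind Definition \ref{revised euclid def of ratios} directly. By the convention of this section all of $a,b,c,d$ live in the single magnitude space $M$, which is exactly what makes the ratios $a:c$ and $b:d$ meaningful; so to prove $a:c=b:d$ it suffices to show that for \emph{every} pair of natural numbers $m,n$, the multiple $ma$ has to $nc$ the same relation ($<$, $=$, or $>$) as $mb$ has to $nd$. I would therefore fix arbitrary $m,n\in\mathbb{N}$ and reduce everything to establishing that one trichotomy statement, mirroring Euclid's use of "equimultiples" $ma,mb$ of the first pair and $nc,nd$ of the second.

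The key step is to transport the hypothesis $a:b=c:d$ over to these multiples. Proposition \ref{V 15} gives $a:b=ma:mb$, and, applied to the pair $c,d$ with the multiplier $n$, gives $c:d=nc:nd$. Since "has the same ratio as" is a symmetric relation (the underlying trichotomy relation in Definition \ref{revised euclid def of ratios} being symmetric), these may be read as $ma:mb=a:b$ and $nc:nd=c:d$. Now Proposition \ref{V 11} lets us chain equalities of ratios through a common middle term: from $ma:mb=a:b$ together with the (symmetrized) hypothesis $c:d=a:b$ we get $ma:mb=c:d$; and then from $ma:mb=c:d$ together with $nc:nd=c:d$ we get $ma:mb=nc:nd$.

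With $ma:mb=nc:nd$ in hand, Proposition \ref{V 14} applies to the four magnitudes $ma,mb,nc,nd$ and yields precisely that $ma$ has to $nc$ the same relation ($<$, $=$, or $>$) as $mb$ has to $nd$. Since $m$ and $n$ were arbitrary, Definition \ref{revised euclid def of ratios} now gives $a:c=b:d$, completing the proof.

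I do not expect a genuine obstacle: the content lies entirely in selecting Propositions \ref{V 15}, \ref{V 11}, and \ref{V 14} and composing them in the right order. The only point needing a little care is the bookkeeping around Proposition \ref{V 11}, whose hypotheses are stated in the asymmetric-looking form "$X=Y$ and $Z=Y$ imply $X=Z$", so one must line up the common middle ratio ($a:b$ in the first application, $c:d$ in the second) correctly, and one should note explicitly that equality of ratios is symmetric so that Proposition \ref{V 15} can be invoked in the orientation $ma:mb=a:b$ and $nc:nd=c:d$.
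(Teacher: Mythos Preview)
Your proposal is correct and follows essentially the same route as the paper: fix $m,n$, use Proposition~\ref{V 15} to get $ma:mb=a:b$ and $nc:nd=c:d$, combine via Proposition~\ref{V 11} (and the hypothesis) to obtain $ma:mb=nc:nd$, then apply Proposition~\ref{V 14} and Definition~\ref{revised euclid def of ratios}. Your explicit attention to symmetry and to the precise shape of Proposition~\ref{V 11}'s hypotheses is more careful than the paper's one-line invocation, but the underlying argument is identical.
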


\begin{proof}
Assume $a:b=c:d$ and let $m,n$ be any two natural numbers. Then%
\[
ma:mb=a:b\text{ and }nc:nd=c:d\text{.}%
\]
by the preceding theorem and hence%
\[
ma:mb=nc:nd
\]
by Proposition \ref{V 11}. Therefore $ma$ has the same relation to $nc$ (%
$<$%
, =, or
$>$%
) as $mb$ has to $nd$ by Proposition \ref{V 14} and hence $a:c=b:d$ by
Definition \ref{revised euclid def of ratios}.
\end{proof}

\begin{remark}
An alternate formulation of the following theorem is that $a:b=a^{\prime
}:b^{\prime}$ implies $\left(  a-b\right)  :b=\left(  a^{\prime}-b^{\prime
}\right)  :b^{\prime}$
\end{remark}

\begin{proposition}
\label{V 17}If $\left(  a+b\right)  :b=\left(  a^{\prime}+b^{\prime}\right)
:b^{\prime}$, then $a:b=a^{\prime}:b^{\prime}$.
\end{proposition}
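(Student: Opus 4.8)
The plan is to verify Definition \ref{revised euclid def of ratios} directly for the pair $a,b$ against the pair $a',b'$: fixing natural numbers $m$ and $n$, I must show that $ma$ has to $nb$ the same relation ($<$, $=$, or $>$) as $ma'$ has to $nb'$. The bridge to the hypothesis is the observation that, by Propositions \ref{V 1} and \ref{V 2}, $m(a+b)=ma+mb$ and $(m+n)b=mb+nb$ (and likewise with primes), so a comparison of $m(a+b)$ with $(m+n)b$ is, after the common summand $mb$ is attached to $ma$ and $nb$, nothing but the comparison of $ma$ with $nb$.

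Concretely, first I would fix $m,n$ and record, via Theorem \ref{equals added to equals or unequals} together with commutativity (Definition \ref{definition magnitude}), that $ma>nb$ holds if and only if $mb+ma>mb+nb$, i.e.\ if and only if $m(a+b)>(m+n)b$; the same equivalence holds on the primed side. Second, since $m+n$ is a legitimate index, the hypothesis $(a+b):b=(a'+b'):b'$ gives, through Definition \ref{revised euclid def of ratios}, that $m(a+b)>(m+n)b$ if and only if $m(a'+b')>(m+n)b'$. Chaining these equivalences yields $ma>nb$ if and only if $ma'>nb'$. Third, I would note that the identical argument goes through verbatim with $>$ replaced by $=$ or by $<$, so that each of the three relations transfers; by trichotomy, exactly as in the proof of Theorem \ref{order preserving functions}, this means $ma$ has to $nb$ the same relation as $ma'$ has to $nb'$. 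Since $m$ and $n$ were arbitrary, Definition \ref{revised euclid def of ratios} then gives $a:b=a':b'$.

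I do not expect a genuine obstacle here; the content is bookkeeping. The one point requiring a little care is that a magnitude space has no zero and no subtraction in general, so the cancellation of the common summand $mb$ cannot be carried out by ``subtracting'' it off — it must be effected by invoking Theorem \ref{equals added to equals or unequals}, which supplies the equivalence in both directions at once, and this is precisely why the proposition is stated with $a+b$ rather than with a difference. The only other thing to keep straight is applying that theorem twice, once to introduce $mb$ on the unprimed side and once to remove $mb'$ on the primed side, and placing the summand first so that Propositions \ref{V 1} and \ref{V 2} can be applied to recognize $m(a+b)$ and $(m+n)b$.
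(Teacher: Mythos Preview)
Your proposal is correct and is essentially the same argument as the paper's: add $mb$ to both sides via Theorem \ref{equals added to equals or unequals}, collapse using Propositions \ref{V 1} and \ref{V 2} to reach $m(a+b)$ versus $(m+n)b$, invoke the hypothesis, and unwind on the primed side. The paper presents this as a chain of one-way implications and then remarks that the same works with $>$ replaced by $=$ or $<$, whereas you phrase each step as an equivalence, but the content is identical.
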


\begin{proof}
Assume $\left(  a+b\right)  :b=\left(  a^{\prime}+b^{\prime}\right)
:b^{\prime}$. For any $m$ and $n$%
\begin{align*}
ma>nb  & \implies ma+mb>nb+mb\text{ (Theorem
\ref{equals added to equals or unequals})}\\
& \implies m\left(  a+b\right)  >\left(  n+m\right)  b\text{ (Propositions
\ref{V 1} and \ref{V 2})}\\
& \implies m\left(  a^{\prime}+b^{\prime}\right)  >\left(  n+m\right)
b^{\prime}\text{ (Definition \ref{revised euclid def of ratios})}\\
& \implies ma^{\prime}+mb^{\prime}>nb^{\prime}+mb^{\prime}\text{ (Propositions
\ref{V 1} and \ref{V 2})}\\
& \implies ma^{\prime}>nb^{\prime}\text{. (Theorem
\ref{equals added to equals or unequals})}%
\end{align*}
And the same argument applies with
$>$
replaced by = or
$<$%
$.$ Therefore $a:b=a^{\prime}:b^{\prime}$ according to Definition
\ref{revised euclid def of ratios}.
\end{proof}

\begin{proposition}
\label{V 18}If $a:b=a^{\prime}:b^{\prime}$, then $\left(  a+b\right)
:b=\left(  a^{\prime}+b^{\prime}\right)  :b^{\prime}$.
\end{proposition}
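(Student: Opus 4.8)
The plan is to verify directly that the pair $a+b,\,b$ satisfies the defining condition of Definition~\ref{revised euclid def of ratios} relative to $a'+b',\,b'$: for every pair $m,n\in\mathbb{N}$, the magnitude $m(a+b)$ must bear to $nb$ the same relation ($<$, $=$, or $>$) as $m(a'+b')$ bears to $nb'$. Using Proposition~\ref{V 1} I would first rewrite $m(a+b)=ma+mb$ and $m(a'+b')=ma'+mb'$, so that the task reduces to comparing $ma+mb$ with $nb$, and $ma'+mb'$ with $nb'$. Then I would split on whether $n\le m$ or $n>m$.

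If $n\le m$, then $nb\le mb$ (using Proposition~\ref{V 6} when $n<m$, and equality when $n=m$), while $mb<ma+mb$ since the whole is greater than the part (commutativity followed by Definition~\ref{definition <}); hence $nb<m(a+b)$ by Theorem~\ref{Transitivity mixed}, and by the identical argument $nb'<m(a'+b')$. So in this case both relations are ``$>$,'' and they agree without any appeal to the hypothesis.

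If instead $n>m$, then $n-m\in\mathbb{N}$ is available and Proposition~\ref{V 2} gives $nb=mb+(n-m)b$ and $nb'=mb'+(n-m)b'$. Writing $ma+mb=mb+ma$ by commutativity, Theorem~\ref{equals added to equals or unequals} shows that $m(a+b)$ has to $nb$ the same relation as $ma$ has to $(n-m)b$, and likewise $m(a'+b')$ has to $nb'$ the same relation as $ma'$ has to $(n-m)b'$. But the hypothesis $a:b=a':b'$ says precisely (Definition~\ref{revised euclid def of ratios}, applied with the multipliers $m$ and $n-m$) that $ma$ has to $(n-m)b$ the same relation as $ma'$ has to $(n-m)b'$. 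Chaining these three ``same relation'' equivalences settles this case as well, and then Definition~\ref{revised euclid def of ratios} yields $(a+b):b=(a'+b'):b'$.

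The only real subtlety — and where I expect the bookkeeping to matter — is the case distinction on $n$ versus $m$: the hypothesis is used (and needed) only when $n>m$, and one must be careful that $n-m$ is a genuine natural number there and that Proposition~\ref{V 2} is invoked via the decomposition $n=m+(n-m)$. Everything else is a routine transfer of relations through Theorems~\ref{equals added to equals or unequals} and~\ref{Transitivity mixed}, in the style of Simson's treatment; as usual it suffices to argue the implication for the relation ``$>$'' and let trichotomy, together with the symmetry of the hypothesis in the primed and unprimed data, dispatch ``$=$'' and ``$<$.''
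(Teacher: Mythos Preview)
Your proposal is correct and follows essentially the same route as the paper's proof (which, as noted there, is Simson's): split into the case $m\ge n$, where both sides automatically give ``$>$'', and the case $n>m$, where one reduces the comparison of $m(a+b)$ with $nb$ to that of $ma$ with $(n-m)b$ and invokes the hypothesis with multipliers $m$ and $n-m$. The only cosmetic difference is that you use Theorem~\ref{equals added to equals or unequals} to cancel the common $mb$ in one step and carry the ``same relation'' phrasing throughout, whereas the paper runs a chain of implications via Theorem~\ref{moving b accross <=>} for ``$<$'' and then repeats for ``$=$'' and ``$>$''.
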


\begin{proof}
Assume $a:b=a^{\prime}:b^{\prime}$ and consider the integral multiples
$m\left(  a+b\right)  $ and $nb$.

Now $a+b>b$ by Definition \ref{definition <} and hence $m\left(  a+b\right)
>mb$ by Proposition \ref{V 5}.

If $m=n$ then $m\left(  a+b\right)  >nb$. And if $m>n$, then $mb>nb$ by
Proposition \ref{V 6} and from $m\left(  a+b\right)  >mb$ and $mb>nb$ follows
$m\left(  a+b\right)  >nb$ by Theorem \ref{Transitivity of >}. In summary, if
$m\geq n$, then $m\left(  a+b\right)  >nb$ and also by the same argument
$m\left(  a^{\prime}+b^{\prime}\right)  >nb^{\prime}$.

Now suppose $m<n$. In this case, $mb<nb$ by Proposition \ref{V 6}. Thus%
\begin{align*}
m\left(  a+b\right)  <nb  & \Longrightarrow ma+mb<nb\text{ (Proposition
\ref{V 1})}\\
& \Longrightarrow ma<nb-mb\text{ (Theorem \ref{moving b accross <=>})}\\
& \Longrightarrow ma<\left(  n-m\right)  b\text{ (Proposition \ref{V 6})}\\
& \Longrightarrow ma^{\prime}<\left(  n-m\right)  b^{\prime}\text{ (Definition
\ref{revised euclid def of ratios})}\\
& \Longrightarrow ma^{\prime}<nb^{\prime}-mb^{\prime}\text{ (Proposition
\ref{V 6})}\\
& \Longrightarrow ma^{\prime}+mb^{\prime}<nb^{\prime}\text{ (Theorem
\ref{moving b accross <=>})}\\
& \Longrightarrow m\left(  a^{\prime}+b^{\prime}\right)  <nb^{\prime}\text{.
(Proposition \ref{V 1})}%
\end{align*}
And the same argument as above applies with
$<$
replaced with = or
$>$%
.

We have now shown that for all $m$ and $n$, $m\left(  a+b\right)  $ has to
$nb$ the same relation (%
$<$%
, =, or
$>$%
) as $m\left(  a^{\prime}+b^{\prime}\right)  $ has to $nb^{\prime}$ and hence
$\left(  a+b\right)  :b=\left(  a^{\prime}+b^{\prime}\right)  :b^{\prime}$
according to Definition \ref{revised euclid def of ratios}.
\end{proof}

\begin{proposition}
\label{V 19}If $\left(  a+b\right)  :\left(  c+d\right)  =a:c$, then $b:d=a:c$.
\end{proposition}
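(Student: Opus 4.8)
The plan is to deduce this from results already established in this section, using two applications of alternation (Proposition \ref{V 16}) with one application of the ``removal of the common summand'' result (Proposition \ref{V 17}) sandwiched in between. No new computation with integral multiples is needed; everything is manipulation of already‑proved proportion identities.

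First I would apply Proposition \ref{V 16} to the hypothesis $\left(  a+b\right)  :\left(  c+d\right)  =a:c$, alternating the terms to obtain $\left(  a+b\right)  :a=\left(  c+d\right)  :c$. Now, by commutativity of $+$ (Definition \ref{definition magnitude}), $a+b=b+a$ and $c+d=d+c$ as elements, so this proportion is literally $\left(  b+a\right)  :a=\left(  d+c\right)  :c$. This has exactly the form of the hypothesis of Proposition \ref{V 17} (with $b$ in the role of the summand to be removed and $a$ in the role of the ``$b$'' of that proposition, and similarly $d,c$ on the other side). Applying Proposition \ref{V 17} then yields $b:a=d:c$.

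Finally I would apply Proposition \ref{V 16} once more, to $b:a=d:c$, alternating it to $b:d=a:c$, which is the desired conclusion. I do not anticipate a real obstacle here: the proposition is essentially a corollary of V.16 and V.17. The only point requiring a little care is the bookkeeping in the middle step, namely recognizing (via commutativity of $+$) that after the first alternation the proportion $\left(  a+b\right)  :a=\left(  c+d\right)  :c$ is already in the shape demanded by Proposition \ref{V 17}, whose statement writes the ``extra'' summand on the left of the $+$.
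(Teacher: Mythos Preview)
Your proposal is correct and follows exactly the paper's own proof: apply Proposition~\ref{V 16}, then Proposition~\ref{V 17}, then Proposition~\ref{V 16} again. The only difference is that you explicitly note the use of commutativity of~$+$ to match the syntactic form of Proposition~\ref{V 17}'s hypothesis, a detail the paper leaves implicit.
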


\begin{proof}%
\begin{align*}
\left(  a+b\right)  :\left(  c+d\right)  =a:c  & \Longrightarrow\left(
a+b\right)  :a=\left(  c+d\right)  :c\text{ (Proposition \ref{V 16})}\\
& \Longrightarrow b:a=d:c\text{ (Proposition \ref{V 17})}\\
& \Longrightarrow b:d=a:c\text{. (Proposition \ref{V 16})}%
\end{align*}

\end{proof}

\begin{proposition}
\label{V 20}If $a:b=a^{\prime}:b^{\prime}$ and $b:c=b^{\prime}:c^{\prime} $,
then $a$ has to $c$ the same relation (%
$<$%
, =, or
$>$%
) as $a^{\prime}$ has to $c^{\prime}$.
\end{proposition}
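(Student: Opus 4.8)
The plan is to follow Euclid's classical proof of the \emph{ex aequali} proposition: reduce to the three trichotomy cases for $a$ versus $c$ and chain together Propositions \ref{V 8}, \ref{V 13}, and \ref{V 10}, with Propositions \ref{V 7}, \ref{V 9}, and \ref{V 11} handling the equality case. The one ingredient not already isolated as a numbered proposition is \emph{inversion}: that $b:c=b':c'$ implies $c:b=c':b'$. This is immediate from Definition \ref{revised euclid def of ratios}, because the defining clause ``$mb$ has to $nc$ the same relation as $mb'$ has to $nc'$ for all $m,n\in\mathbb{N}$'' becomes, after swapping the names of the dummy multipliers and reading each relation from the opposite side, the clause ``$mc$ has to $nb$ the same relation as $mc'$ has to $nb'$ for all $m,n\in\mathbb{N}$''; I would record this one-sentence observation before the case analysis, and I would use freely, as the earlier proofs in this section do, that $a:b=a':b'$ is symmetric in its two pairs.

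If $a>c$, then Proposition \ref{V 8} with common magnitude $b$ gives $a:b>c:b$; since $a:b=a':b'$, Proposition \ref{V 13} gives $a':b'>c:b$; by inversion $c:b=c':b'$, so a second application of Proposition \ref{V 13} gives $a':b'>c':b'$, whence $a'>c'$ by Proposition \ref{V 10}. If $a=c$, then $a:b=c:b$ by Proposition \ref{V 7}, hence $a':b'=a:b=c:b=c':b'$ by inversion and Proposition \ref{V 11}, hence $a'=c'$ by Proposition \ref{V 9}. If $a<c$, the symmetric chain (Proposition \ref{V 8} gives $c:b>a:b$, then Propositions \ref{V 13} and \ref{V 10}) gives $c'>a'$. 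The three hypotheses $a<c$, $a=c$, $a>c$ are mutually exclusive and exhaustive and map respectively to $a'<c'$, $a'=c'$, $a'>c'$, so the converse implications follow from trichotomy, exactly as in the proof of Theorem \ref{order preserving functions}; thus $a'$ has to $c'$ the same relation as $a$ has to $c$.

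The hard part is not a calculation but noticing that the tempting short-cut is blocked. One would like to alternate $a:b=a':b'$ and $b:c=b':c'$ to get $a:a'=b:b'$ and $b:b'=c:c'$, hence $a:a'=c:c'$, hence $a:c=a':c'$ by another alternation; but Propositions \ref{V 14} and \ref{V 16} are stated and proved only for four magnitudes lying in a single magnitude space, whereas here the unprimed magnitudes lie in $M$ and the primed ones in a possibly different $M'$, so that the pair $a,a'$ need not lie in any common magnitude space and the ratio $a:a'$ may be undefined. This is why Euclid's route through Propositions \ref{V 8}, \ref{V 13}, \ref{V 10} is the right one: Proposition \ref{V 13} is deliberately stated with unprimed, primed, and double-primed ratios so that its three slots can be filled by ratios from three different spaces, which is precisely what the chain above requires (for instance, passing from ``$a':b'>c:b$ and $c:b=c':b'$'' to ``$a':b'>c':b'$'' uses the second half of Proposition \ref{V 13} with the equality $c':b'=c:b$). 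The only care needed is to match each invocation of \ref{V 13} to the exact hypothesis pattern in its statement.
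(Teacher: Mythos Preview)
Your proof is correct and follows essentially the same route as the paper: invert $b:c=b':c'$ to $c:b=c':b'$ directly from Definition \ref{revised euclid def of ratios}, then chain Propositions \ref{V 8}, \ref{V 13}, \ref{V 13}, \ref{V 10} for the case $a>c$, with the other cases handled analogously. The paper is terser (it writes out only the $a>c$ case and dismisses the others with ``similar arguments''), but your explicit treatment of the equality case via \ref{V 7}, \ref{V 11}, \ref{V 9} and your closing remark on why alternation is unavailable across distinct magnitude spaces are both welcome additions.
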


\begin{proof}
Assume $a:b=a^{\prime}:b^{\prime}$ and $b:c=b^{\prime}:c^{\prime}$. Then
$c:b=c^{\prime}:b^{\prime}$ by Definition \ref{revised euclid def of ratios}
and hence%
\begin{align*}
a>c  & \implies a:b>c:b\text{ (Proposition \ref{V 8})}\\
& \implies a^{\prime}:b^{\prime}>c:b\text{ (Proposition \ref{V 13})}\\
& \implies a^{\prime}:b^{\prime}>c^{\prime}:b^{\prime}\text{ (Proposition
\ref{V 13})}\\
& \implies a^{\prime}>c^{\prime}\text{. (Proposition \ref{V 10})}%
\end{align*}
And similar arguments apply with
$>$
replaced by = or
$<$%
$.$
\end{proof}

\begin{proposition}
\label{V 21}If $a:b=b^{\prime}:c^{\prime}$ and $b:c=a^{\prime}:b^{\prime} $,
then $a$ has to $c$ the same relation (%
$<$%
, =, or
$>$%
) as $a^{\prime}$ has to $c^{\prime}$.
\end{proposition}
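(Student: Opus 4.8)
The plan is to imitate the proof of Proposition~\ref{V 20} almost verbatim. The only new feature is that the hypotheses are ``perturbed'' --- the middle terms $b$ and $b'$ sit on opposite sides of the two ratio equalities --- so one of them must be inverted first, after which the left-hand forms of Propositions~\ref{V 8}, \ref{V 13}, \ref{V 10} (the ones about $c:a$ versus $c:b$) do the work.

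First I would note that $b:c = a':b'$ yields $c:b = b':a'$ at once from Definition~\ref{revised euclid def of ratios}, exactly as the inversion $b:c = b':c' \Rightarrow c:b = c':b'$ is used inside the proof of Proposition~\ref{V 20}; this is legitimate even though $b,c \in M$ while $a',b' \in M^{\prime}$. So the working hypotheses become $a:b = b':c'$ and $c:b = b':a'$.

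Then, in the case $a > c$, I would run the chain
\begin{align*}
a > c &\implies a:b > c:b \text{ (Proposition \ref{V 8})}\\
&\implies b':c' > c:b \text{ (Proposition \ref{V 13}, via } a:b = b':c'\text{)}\\
&\implies b':c' > b':a' \text{ (Proposition \ref{V 13}, via } c:b = b':a'\text{)}\\
&\implies a' > c' \text{ (Proposition \ref{V 10}).}
\end{align*}
The same chain with $>$ replaced throughout by $=$ (using Propositions~\ref{V 7}, \ref{V 11}, \ref{V 9} in place of \ref{V 8}, \ref{V 13}, \ref{V 10}) gives $a = c \implies a' = c'$, and the chain with $>$ replaced by $<$ gives $a < c \implies a' < c'$. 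Since the three cases for the pair $a,c$ are exhaustive and mutually exclusive and land in the three exhaustive mutually exclusive cases for the pair $a',c'$, trichotomy forces them to match up, which is precisely the assertion of the proposition.

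I do not expect a genuine obstacle: the difficulty is entirely bookkeeping. The two points to get right are (i) that the two applications of Proposition~\ref{V 13} invoke its two distinct stated forms, and the closing step must use the ``$c:a$ versus $c:b$'' form of Proposition~\ref{V 10} rather than the ``$a:c$ versus $b:c$'' one, both dictated by the crossed arrangement of the hypotheses; and (ii) keeping track of which magnitude space ($M$ or $M^{\prime}$) each ratio inequality belongs to, so that the invertendo step and the substitutions through $a:b = b':c'$ and $c:b = b':a'$ are applied to ratios in the correct spaces.
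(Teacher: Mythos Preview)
Your proposal is correct and follows the paper's own proof essentially line for line: invert $b:c=a':b'$ to $c:b=b':a'$ via Definition~\ref{revised euclid def of ratios}, then chain Propositions~\ref{V 8}, \ref{V 13} (twice), and \ref{V 10} exactly as you wrote, with the equality and reversed-inequality cases handled analogously.
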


\begin{proof}
Assume $a:b=b^{\prime}:c^{\prime}$ and $b:c=a^{\prime}:b^{\prime}$. Then
$c:b=b^{\prime}:a^{\prime}$ by Definition \ref{revised euclid def of ratios}
and hence%
\begin{align*}
a>c  & \implies a:b>c:b\text{ (Proposition \ref{V 8})}\\
& \implies b^{\prime}:c^{\prime}>c:b\text{ (Proposition \ref{V 13})}\\
& \implies b^{\prime}:c^{\prime}>b^{\prime}:a^{\prime}\text{ (Proposition
\ref{V 13})}\\
& \implies a^{\prime}>c^{\prime}\text{. (Proposition \ref{V 10})}%
\end{align*}
And similar arguments apply with
$>$
replaced by = or
$<$%
$.$
\end{proof}

\begin{proposition}
\label{V 22}If $a:b=a^{\prime}:b^{\prime}$ and $b:c=b^{\prime}:c^{\prime} $,
then $a:c=a^{\prime}:c^{\prime}$.
\end{proposition}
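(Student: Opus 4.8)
The plan is to follow Euclid's \emph{ex aequali} argument, using Proposition \ref{V 4} to absorb the multiples onto the outer terms of each proportion and then Proposition \ref{V 20} to chain the two proportions together through their common middle term. The target, by Definition \ref{revised euclid def of ratios}, is to show that for every pair of natural numbers $m,n$, $ma$ has to $nc$ the same relation ($<$, $=$, or $>$) as $ma^{\prime}$ has to $nc^{\prime}$.

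So first I would fix arbitrary $m,n\in\mathbb{N}$. From the hypothesis $a:b=a^{\prime}:b^{\prime}$, I would apply Proposition \ref{V 4} with $j=m$ and $k=1$ to get $ma:1b=ma^{\prime}:1b^{\prime}$, which is $ma:b=ma^{\prime}:b^{\prime}$ since $1b=b$ and $1b^{\prime}=b^{\prime}$ by Definition \ref{definition integral multiple}. Similarly, applying Proposition \ref{V 4} to the hypothesis $b:c=b^{\prime}:c^{\prime}$ with $j=1$ and $k=n$ gives $b:nc=b^{\prime}:nc^{\prime}$.

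Now the middle terms coincide verbatim: I have $ma:b=ma^{\prime}:b^{\prime}$ and $b:nc=b^{\prime}:nc^{\prime}$, so Proposition \ref{V 20} applies directly (with its $a,b,c,a^{\prime},b^{\prime},c^{\prime}$ instantiated as $ma,b,nc,ma^{\prime},b^{\prime},nc^{\prime}$) and yields that $ma$ has to $nc$ the same relation as $ma^{\prime}$ has to $nc^{\prime}$. Since $m$ and $n$ were arbitrary, Definition \ref{revised euclid def of ratios} gives $a:c=a^{\prime}:c^{\prime}$, as required.

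I do not expect a genuine obstacle here; the one thing to get right is the bookkeeping, namely recognizing that Proposition \ref{V 4} is exactly the tool that replaces $a,a^{\prime}$ by $ma,ma^{\prime}$ and $c,c^{\prime}$ by $nc,nc^{\prime}$ while leaving the shared term $b,b^{\prime}$ untouched, so that Proposition \ref{V 20} can be invoked without any further manipulation.
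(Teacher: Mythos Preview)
Your proof is correct and follows essentially the same \emph{ex aequali} strategy as the paper: apply Proposition~\ref{V 4} to each hypothesis to introduce the multiples $m$ and $n$, then invoke Proposition~\ref{V 20} on the resulting proportions with a shared middle term, and conclude via Definition~\ref{revised euclid def of ratios}. The only cosmetic difference is that the paper multiplies the middle term as well, obtaining $(ma):(mb)=(ma'):(mb')$ and $(mb):(nc)=(mb'):(nc')$ with shared middle $mb$, whereas you keep the middle term as $b$ by choosing $k=1$ and $j=1$ respectively; both choices set up Proposition~\ref{V 20} equally well.
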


\begin{proof}
Assume $a:b=a^{\prime}:b^{\prime}$ and $b:c=b^{\prime}:c^{\prime}$ and let $m$
and $n$ be any two natural numbers. Then%
\[
\left(  ma\right)  :\left(  mb\right)  =\left(  ma^{\prime}\right)  :\left(
mb^{\prime}\right)  \text{ and }\left(  mb\right)  :\left(  nc\right)
=\left(  mb^{\prime}\right)  :\left(  nc^{\prime}\right)
\]
by Proposition \ref{V 4}. And $ma$ has to $nc$ the same relation (%
$<$%
, =, or
$>$%
) as $ma^{\prime}$ has to $nc^{\prime}$ by Proposition \ref{V 20}. Therefore
$a:c=a^{\prime}:c^{\prime}$ according to Definition
\ref{revised euclid def of ratios}.
\end{proof}

\begin{proposition}
\label{V 23}If $a:b=b^{\prime}:c^{\prime}$ and $b:c=a^{\prime}:b^{\prime} $,
then $a:c=a^{\prime}:c^{\prime}$.
\end{proposition}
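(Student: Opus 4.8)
The plan is to imitate the proof of Proposition \ref{V 22} (ex aequali for ordinary proportions) but to feed the scaled proportions into Proposition \ref{V 21} (ex aequali for perturbed proportions) rather than into Proposition \ref{V 20}. Fix arbitrary natural numbers $m$ and $n$. The goal becomes to show that $ma$ has to $nc$ the same relation ($<$, $=$, or $>$) as $ma'$ has to $nc'$; once this is established for all $m,n$, Definition \ref{revised euclid def of ratios} immediately gives $a:c=a':c'$.

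First I would assemble the two proportions that Proposition \ref{V 21} requires, arranging things so that $nb$ is the common middle magnitude in $M$ and $mb'$ is the common middle magnitude in $M'$. From the hypothesis $a:b=b':c'$, Proposition \ref{V 4} (scaling the first pair by $m$ and the second by $n$) gives $ma:nb=mb':nc'$. For the second proportion I want $nb:nc=ma':mb'$; here Proposition \ref{V 4} alone only produces $nb:nc=na':nb'$, with the wrong multiplier on the primed side, so I would instead route through the canonical ratio: $nb:nc=b:c$ by Proposition \ref{V 15}, $b:c=a':b'$ by hypothesis, and $a':b'=ma':mb'$ by Proposition \ref{V 15}, splicing these with Proposition \ref{V 11} to obtain $nb:nc=ma':mb'$.

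With the proportions $ma:nb=mb':nc'$ and $nb:nc=ma':mb'$ in hand, Proposition \ref{V 21} applies directly and yields that $ma$ has to $nc$ the same relation as $ma'$ has to $nc'$, which finishes the argument as noted above. The step I expect to be the crux is the construction of the second proportion: in the perturbed configuration the shared primed magnitude must be carried with multiplier $m$ while the shared unprimed magnitude must be carried with multiplier $n$ throughout, so $nb:nc=ma':mb'$ cannot simply be read off from Proposition \ref{V 4} the way the analogous proportion is in the proof of Proposition \ref{V 22}; the detour through the common ratio $a':b'$ via Propositions \ref{V 15} and \ref{V 11} is exactly what repairs the mismatch. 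A secondary point worth checking is that, unlike Euclid's own proof of this proposition, this route never forms a ratio between a magnitude of $M$ and a magnitude of $M'$, so everything stays inside Definition \ref{revised euclid def of ratios}.
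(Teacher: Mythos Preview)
Your argument is correct and follows the same overall scheme as the paper: scale the two given proportions, feed the scaled versions into Proposition~\ref{V 21}, and conclude via Definition~\ref{revised euclid def of ratios}. In fact your version is the more careful one. The paper, citing only Proposition~\ref{V 4}, writes down the pair
\[
(ma):(nb)=(mb'):(nc')\quad\text{and}\quad (mb):(nc)=(ma'):(nb')
\]
and then invokes Proposition~\ref{V 21}; but in that pair the shared middle in $M$ is $nb$ in one proportion and $mb$ in the other (and likewise $mb'$ versus $nb'$ in $M'$), so the hypotheses of Proposition~\ref{V 21} are not literally satisfied. Your detour through Proposition~\ref{V 15} and Proposition~\ref{V 11} to obtain $nb:nc=ma':mb'$ is precisely what aligns the middle terms, and is how Euclid himself handles this step in Book~V.
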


\begin{proof}
Assume $a:b=b^{\prime}:c^{\prime}$ and $b:c=a^{\prime}:b^{\prime}$ and let $m$
and $n$ be any two natural numbers. Then%
\[
\left(  ma\right)  :\left(  nb\right)  =\left(  mb^{\prime}\right)  :\left(
nc^{\prime}\right)  \text{ and }\left(  mb\right)  :\left(  nc\right)
=\left(  ma^{\prime}\right)  :\left(  nb^{\prime}\right)  \text{.}%
\]
by Proposition \ref{V 4}. And $ma$ has to $nc$ the same relation (%
$<$%
, =, or
$>$%
)as $ma^{\prime}$ has to $nc^{\prime}$ by Proposition \ref{V 21}. Therefore
$a:c=a^{\prime}:c^{\prime}$ according to Definition
\ref{revised euclid def of ratios}.
\end{proof}

\begin{proposition}
\label{V 24}If $a:b=c:d$ and $e:b=f:d$, then $\left(  a+e\right)  :b=\left(
c+f\right)  :d$.
\end{proposition}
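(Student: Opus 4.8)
The plan is to reproduce Euclid's classical argument, which derives this from the "ex aequali" proposition (Proposition \ref{V 22}) together with "componendo" (Proposition \ref{V 18}); no new idea is needed, only earlier results. First I would invert the second hypothesis: from $e:b=f:d$ it follows that $b:e=d:f$. This is immediate from Definition \ref{revised euclid def of ratios} — exactly the move already used in the proofs of Propositions \ref{V 20}, \ref{V 21}, and \ref{V 23} — since for all $m,n$ the relation $mx>ny$ fails precisely when $ny\geq mx$, so reversing the two magnitudes of a ratio on both sides preserves "same ratio."

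Next I would apply Proposition \ref{V 22} to the proportions $a:b=c:d$ and $b:e=d:f$: matching $a:b=c:d$ to the hypothesis "$a:b=a^{\prime}:b^{\prime}$" and $b:e=d:f$ to "$b:c=b^{\prime}:c^{\prime}$", the conclusion is $a:e=c:f$. Then I would apply Proposition \ref{V 18}, with $e$ and $f$ playing the roles of the second and fourth magnitudes, to get $(a+e):e=(c+f):f$. Finally I would apply Proposition \ref{V 22} once more, now to $(a+e):e=(c+f):f$ and the original hypothesis $e:b=f:d$: matching $(a+e):e=(c+f):f$ to "$a:b=a^{\prime}:b^{\prime}$" and $e:b=f:d$ to "$b:c=b^{\prime}:c^{\prime}$" yields exactly $(a+e):b=(c+f):d$, which is the assertion.

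Since every step is a direct invocation of a previously proved proposition, the only point demanding care is the correct assignment of which magnitude plays which role in each use of Proposition \ref{V 22} — in particular, making sure the shared "middle" magnitude lines up ($b$ paired with $d$ in the first application, $e$ paired with $f$ in the second) so that the ex aequali conclusion is the one we want. I expect this bookkeeping, rather than any genuine difficulty, to be the main obstacle; an alternative route avoiding the initial inversion would instead alternate (Proposition \ref{V 16}) both hypotheses to $a:c=b:d$ and $e:f=b:d$, conclude $a:c=e:f$ by Proposition \ref{V 11}, alternate back to $a:e=c:f$, and then finish with Proposition \ref{V 18} and Proposition \ref{V 22} as above.
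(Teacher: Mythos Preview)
Your proposal is correct and follows essentially the same route as the paper's proof: invert $e:b=f:d$ to $b:e=d:f$, apply Proposition~\ref{V 22} to obtain $a:e=c:f$, use Proposition~\ref{V 18} to get $(a+e):e=(c+f):f$, and finish with a second application of Proposition~\ref{V 22} against $e:b=f:d$. The steps, their order, and the role assignments all match the paper exactly.
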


\begin{proof}
Assume $a:b=c:d$ and $e:b=f:d$ or, equivalently,%
\[
a:b=c:d\text{ and }b:e=d:f\text{.}%
\]
Then $a:e=c:f$ by Proposition \ref{V 22} and $\left(  a+e\right)  :e=\left(
c+f\right)  :f$ by Proposition \ref{V 18}. And $e:b=f:d$ by assumption.
Therefore $\left(  a+e\right)  :b=\left(  c+f\right)  :d$ by Proposition
\ref{V 22}.
\end{proof}

\section{Embeddings and the Fourth Proportional}

In this section $a,b,c$ are elements of a magnitude space $M$ and $a^{\prime
},b^{\prime},c^{\prime}$ are elements of a magnitude space $M^{\prime}$.

\begin{definition}
\label{definition fourth proportional}If $a:b=a^{\prime}:b^{\prime}$, then we
say that $b^{\prime}$ is a \textbf{fourth proportional} to $a$, $b$, and
$a^{\prime}$.
\end{definition}

\begin{theorem}
\label{fourth proportional unique}If there is a fourth proportional to $a$,
$b$, and $a^{\prime}$, then it is unique.
\end{theorem}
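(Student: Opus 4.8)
The plan is to reduce uniqueness to the cancellation law for ratios already established as Proposition \ref{V 9}. Suppose $b'$ and $b''$ are each a fourth proportional to $a$, $b$, and $a'$. By Definition \ref{definition fourth proportional} this means exactly that $a:b=a':b'$ and $a:b=a':b''$.

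Next I would combine these two proportions into a single one relating $b'$ and $b''$. The relation ``has the same ratio as'' is symmetric, directly from Definition \ref{revised euclid def of ratios}, since ``has the same relation ($<$, $=$, or $>$) as'' is symmetric; hence $a':b'=a:b$. Applying Proposition \ref{V 11} with the common ratio $a:b$ on the right (to the facts $a':b'=a:b$ and $a':b''=a:b$) then yields $a':b'=a':b''$.

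Finally, applying the second statement of Proposition \ref{V 9} to the proportion $a':b'=a':b''$ gives $b'=b''$, which is the asserted uniqueness.

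I do not expect a genuine obstacle here; the only point that needs a moment's care is putting the two hypothesis proportions into the precise shape required by Proposition \ref{V 11}, which is phrased as transitivity through a shared ratio appearing on the right-hand side, and this is dispatched by the trivial symmetry of the ratio relation noted above.
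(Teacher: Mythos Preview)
Your proof is correct and matches the paper's argument essentially line for line: assume two fourth proportionals, use Proposition~\ref{V 11} to conclude $a':b'=a':b''$, then apply Proposition~\ref{V 9}. The only difference is that you spell out the symmetry of the ratio relation needed to fit the hypotheses into the precise form of Proposition~\ref{V 11}, which the paper leaves implicit.
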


\begin{proof}
Suppose $b^{\prime}$ and $c^{\prime}$ are each fourth proportionals to $a$,
$b$, and $a^{\prime}$. Then $a:b=a^{\prime}:b^{\prime}$ and $a:b=a^{\prime
}:c^{\prime}$ by Definition \ref{definition fourth proportional}. Hence
$a^{\prime}:b^{\prime}=a^{\prime}:c^{\prime}$ by Proposition \ref{V 11} and
$b^{\prime}=c^{\prime}$ by Proposition \ref{V 9}.
\end{proof}

\begin{theorem}
If $\varphi:M\rightarrow M^{\prime}$ is an embedding which maps $a$ into
$a^{\prime}$, then $\varphi b$ is the fourth proportional to $a,b,a^{\prime} $
for each $b$.
\end{theorem}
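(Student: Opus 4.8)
The plan is to invoke Theorem \ref{homomorphisms are proportional} directly. Recall that from the section heading onward we consider only Archimedean magnitude spaces, so $M$ and $M^{\prime}$ are Archimedean and the hypotheses of that theorem are met: $\varphi:M\rightarrow M^{\prime}$ is an embedding. Hence $\varphi a:\varphi b=a:b$ for every pair of elements of $M$; in particular, applying this with the given $a$ and any $b$ gives $\varphi a:\varphi b=a:b$.

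Next I would use the hypothesis $\varphi a=a^{\prime}$ to rewrite the left side, obtaining $a^{\prime}:\varphi b=a:b$, equivalently $a:b=a^{\prime}:\varphi b$. By Definition \ref{definition fourth proportional}, this is exactly the statement that $\varphi b$ is a fourth proportional to $a$, $b$, and $a^{\prime}$. Since $b$ was arbitrary, this holds for each $b\in M$.

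There is essentially no obstacle here; the only thing worth being careful about is making sure Theorem \ref{homomorphisms are proportional} legitimately applies, i.e., that both $M$ and $M^{\prime}$ are Archimedean (guaranteed by the standing convention of this section) so that every pair of their elements has a ratio and the notation $a:b=a^{\prime}:b^{\prime}$ is meaningful. Given that, the proof is a two-line substitution. (One could optionally remark, via Theorem \ref{fourth proportional unique}, that $\varphi b$ is therefore \emph{the} fourth proportional, but that uniqueness is already folded into the phrasing of the statement.)
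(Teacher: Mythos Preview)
Your proof is correct and follows exactly the paper's approach: the paper's proof simply cites Theorem \ref{homomorphisms are proportional} and Definition \ref{definition fourth proportional}, which is precisely the substitution argument you spell out.
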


\begin{proof}
Theorem \ref{homomorphisms are proportional} and Definition
\ref{definition fourth proportional}.
\end{proof}

\begin{theorem}
\label{embedding from fourth proportional}If $a$ and $a^{\prime}$ are fixed
and there is, for each $b$, a fourth proportional to $a,b,a^{\prime}$, then
there is an embedding $\varphi:M\rightarrow M^{\prime}$ which maps $a$ into
$a^{\prime}$.
\end{theorem}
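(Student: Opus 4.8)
The plan is to define the map directly from the hypothesis. For each $b\in M$ there is, by assumption, a fourth proportional to $a,b,a'$, and it is unique by Theorem \ref{fourth proportional unique}; so I would set $\varphi b$ to be that element, giving a well-defined function $\varphi:M\rightarrow M'$ characterized by $a:b=a':\varphi b$ for every $b$ (Definition \ref{definition fourth proportional}). Once $\varphi$ is shown to be a homomorphism, Theorem \ref{morphisms 1-1} makes it automatically an embedding, so the only real work is (i) checking $\varphi a=a'$ and (ii) checking additivity.

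For $\varphi a=a'$ I would argue that $a:a=a':a'$: by Proposition \ref{V 6}, for any $m,n$ the multiple $ma$ has to $na$ the same relation ($<$, $=$, or $>$) as $m$ has to $n$, and likewise $ma'$ has to $na'$; hence $ma$ has to $na$ the same relation as $ma'$ has to $na'$ for all $m,n$, so $a:a=a':a'$ by Definition \ref{revised euclid def of ratios}. Thus $a'$ is a fourth proportional to $a,a,a'$, and uniqueness gives $\varphi a=a'$.

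For additivity, fix $b,c\in M$. From $a:b=a':\varphi b$, Proposition \ref{V 16} (alternation) gives $a:a'=b:\varphi b$, and similarly $a:a'=c:\varphi c$, so $b:\varphi b=c:\varphi c$ by Proposition \ref{V 11}. Then Proposition \ref{V 12} yields $b:\varphi b=(b+c):(\varphi b+\varphi c)$, and combining this with $a:a'=b:\varphi b$ through Proposition \ref{V 11} gives $a:a'=(b+c):(\varphi b+\varphi c)$. Alternating once more with Proposition \ref{V 16} gives $a:(b+c)=a':(\varphi b+\varphi c)$, so $\varphi b+\varphi c$ is the fourth proportional to $a,b+c,a'$, whence $\varphi(b+c)=\varphi b+\varphi c$ by uniqueness. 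Thus $\varphi$ is a homomorphism by Definition \ref{definition homomorphism}, hence an embedding by Theorem \ref{morphisms 1-1}, and it maps $a$ to $a'$.

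The only delicate point is the additivity step: the two hypotheses $a:b=a':\varphi b$ and $a:c=a':\varphi c$ are not in a form to which the ``componendo''-type Proposition \ref{V 12} applies directly, so one must first alternate (Proposition \ref{V 16}) to the cross ratios $b:\varphi b$ and $c:\varphi c$, identify them via Proposition \ref{V 11}, apply Proposition \ref{V 12}, and then alternate back. Everything else is routine bookkeeping with the ratio propositions already established.
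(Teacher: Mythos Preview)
Your definition of $\varphi$ and the overall strategy are correct, but the additivity argument has a genuine gap: you invoke Proposition~\ref{V 16} (alternation) and Proposition~\ref{V 12} in situations where they do not apply. A ratio $x:y$ is only defined when $x$ and $y$ lie in the \emph{same} magnitude space (Definition~\ref{revised euclid def of ratios} compares $mx$ to $ny$), and Proposition~\ref{V 16} is stated and proved for $a,b,c,d$ all in a single space $M$ --- its proof goes through Proposition~\ref{V 14}, which literally compares $a$ with $c$. In the present theorem, however, $a,b\in M$ while $a',\varphi b\in M'$, and nothing says $M=M'$. Thus the expression ``$a:a'$'' you obtain after alternating is undefined, and likewise ``$b:\varphi b$'' is undefined, so the subsequent use of Proposition~\ref{V 12} (whose proof forms the mixed sum $nb+mc$) also breaks down.

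The paper circumvents this by using Proposition~\ref{V 24} instead. From the defining relations it passes to the inverted (but still intra-space) ratios
\[
\varphi b:a'=b:a,\qquad \varphi c:a'=c:a,\qquad \varphi(b+c):a'=(b+c):a,
\]
and then Proposition~\ref{V 24} applied to the first two gives $(\varphi b+\varphi c):a'=(b+c):a$ directly, with every ratio and every sum taken inside a single space. Comparing with the third relation via Propositions~\ref{V 11} and~\ref{V 9} yields $\varphi(b+c)=\varphi b+\varphi c$. So the fix is simply to replace your alternation-and-componendo detour by a single appeal to Proposition~\ref{V 24}; your argument for $\varphi a=a'$ is fine.
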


\begin{proof}
Let $a$ and $a^{\prime}$ be fixed and suppose that there is, for each $b$, a
fourth proportional to $a,b,a^{\prime}$. Then for each $b$ there is exactly
one fourth proportional to $a,b,a^{\prime}$ by Theorem
\ref{fourth proportional unique}. Thus we can define a function $\varphi
:M\rightarrow M^{\prime}$ such that $\varphi b$ is the fourth proportional to
$a$, $b$, and $a^{\prime}$.

I say $\varphi$ is an embedding. For any two elements $b$ and $c$ of $M$, we
have by assumption%
\[
a:b=a^{\prime}:\varphi b\text{, \ }a:c=a^{\prime}:\varphi c\text{, \ and
}a:b+c=a^{\prime}:\varphi\left(  b+c\right)
\]
or equivalently by Definition \ref{revised euclid def of ratios}%
\[
\varphi b:a^{\prime}=b:a\text{, \ }\varphi c:a^{\prime}=c:a\text{, \ and
}\varphi\left(  b+c\right)  :a^{\prime}=b+c:a\text{.}%
\]
Thus%
\[
\varphi b+\varphi c:a^{\prime}=b+c:a
\]
by Proposition \ref{V 24},%
\[
\varphi\left(  b+c\right)  :a^{\prime}=\varphi b+\varphi c:a^{\prime}%
\]
by Proposition \ref{V 11}, and
\[
\varphi\left(  b+c\right)  =\varphi b+\varphi c
\]
by Proposition \ref{V 9}$.$ We have now shown that $\varphi$ is homomorphism
according to Definition \ref{definition homomorphism} and hence $\varphi$ is
an embedding by Theorem \ref{morphisms 1-1}. And $\varphi a$ is the fourth
proportional to $a$, $a$, and $a^{\prime}$, or in other words $a:a=a^{\prime
}:\varphi a$. And since $1a=1a$, it follows that $1a^{\prime}=1\left(  \varphi
a\right)  $ by Definition \ref{revised euclid def of ratios}. Therefore
$a^{\prime}=\varphi a$ by Definition \ref{definition integral multiple}.
\end{proof}

\ 

In fact the embedding constructed in the preceding theorem is the unique
embedding of $M$ into $M^{\prime}$ which maps $a$ into $a^{\prime}$. This and
a bit more is established in the next theorem.

\begin{theorem}
\label{unique morphism}If $\varphi$ and $\chi$ are embeddings from $M$ into
$M^{\prime}$, then $\varphi a$ has to $\chi a$ the same relation (%
$<$%
, =, or
$>$%
) as $\varphi b$ has to $\chi b$. (In particular if $\varphi a=\chi a$ for one
$a$, then $\varphi b=\chi b$ for all $b$ and $\varphi=\chi$.)
\end{theorem}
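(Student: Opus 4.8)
The plan is to use the theory of ratios established in the preceding sections, specifically Theorem \ref{homomorphisms are proportional}, which says that any embedding respects ratios: $\varphi a : \varphi b = a : b$ for all $a,b$. Applying this to both $\varphi$ and $\chi$ gives $\varphi a : \varphi b = a : b = \chi a : \chi b$, so $\varphi a : \varphi b = \chi a : \chi b$. The goal is then to compare $\varphi a$ with $\chi a$ via this proportionality, concluding that this relation is the same as the relation between $\varphi b$ and $\chi b$.

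First I would record the two proportions $\varphi a : \varphi b = a : b$ and $\chi a : \chi b = a : b$ from Theorem \ref{homomorphisms are proportional}, and combine them with Proposition \ref{V 11} (transitivity of sameness of ratio) to obtain $\varphi a : \varphi b = \chi a : \chi b$. Next, I would rewrite this, using the symmetry/inversion built into Definition \ref{revised euclid def of ratios}, in the alternating form — or more directly invoke Proposition \ref{V 16} (if $a:b = c:d$ then $a:c = b:d$) on the proportion $\varphi a : \varphi b = \chi a : \chi b$ to get $\varphi a : \chi a = \varphi b : \chi b$. Once we have $\varphi a : \chi a = \varphi b : \chi b$, Proposition \ref{V 14} (if $a:b=c:d$ then $a$ has to $c$ the same relation as $b$ has to $d$) immediately yields that $\varphi a$ has to $\chi a$ the same relation ($<$, $=$, or $>$) as $\varphi b$ has to $\chi b$, which is exactly the assertion.

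For the parenthetical remark, suppose $\varphi a = \chi a$ for some particular $a$. Then for any $b$, since $\varphi a$ has to $\chi a$ the same relation as $\varphi b$ has to $\chi b$, and $\varphi a = \chi a$, we get $\varphi b = \chi b$. Since this holds for all $b$, Definition \ref{definition equal functions} gives $\varphi = \chi$.

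The main obstacle I anticipate is purely bookkeeping: making sure the ratio manipulations (passing from $\varphi a : \varphi b = \chi a : \chi b$ to $\varphi a : \chi a = \varphi b : \chi b$) are legitimate given that $\varphi a, \chi a, \varphi b, \chi b$ all lie in $M'$ while $a, b$ lie in $M$ — but since Proposition \ref{V 16} and Proposition \ref{V 14} are stated for elements of a single magnitude space (here $M'$), and all four quantities $\varphi a, \varphi b, \chi a, \chi b$ are in $M'$, this is fine. The only genuinely delicate point is the appeal to Theorem \ref{homomorphisms are proportional}, which requires the magnitude spaces to be Archimedean; but the paper has declared at the start of Section 10 that henceforth all magnitude spaces are Archimedean, so that hypothesis is in force.
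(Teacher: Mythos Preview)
Your approach is essentially the paper's: obtain $\varphi a:\varphi b=a:b$ and $\chi a:\chi b=a:b$ from Theorem~\ref{homomorphisms are proportional}, combine via Proposition~\ref{V 11}, then finish with Proposition~\ref{V 14}. However, the paper applies Proposition~\ref{V 14} \emph{directly} to $\varphi a:\varphi b=\chi a:\chi b$ (which immediately yields that $\varphi a$ has to $\chi a$ the same relation as $\varphi b$ has to $\chi b$), so your intermediate alternation via Proposition~\ref{V 16} is an unnecessary detour. Note also that your final citation is off: once you have alternated to $\varphi a:\chi a=\varphi b:\chi b$, Proposition~\ref{V 14} would compare $\varphi a$ with $\varphi b$, not $\varphi a$ with $\chi a$; the inference you actually want there is just Definition~\ref{revised euclid def of ratios} with $m=n=1$.
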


\begin{proof}
Assume $\varphi$ and $\chi$ are embeddings from $M$ into $M^{\prime}$. Then%
\[
\varphi a:\varphi b=a:b\text{ and }\chi a:\chi b=a:b
\]
by Theorem \ref{homomorphisms are proportional} and hence%
\[
\varphi a:\varphi b=\chi a:\chi b
\]
by Proposition \ref{V 11}. Therefore $\varphi a$ has to $\chi a$ the same
relation (%
$<$%
, =, or
$>$%
) as $\varphi b$ has to $\chi b $ by Proposition \ref{V 14}.
\end{proof}

\begin{theorem}
\label{endomorphisms commute}If $\varphi$ and $\chi$ are endomorphisms of $M$,
then $\varphi\chi=\chi\varphi$.
\end{theorem}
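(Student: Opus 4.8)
The plan is to show that the two composites agree at every point of $M$, via the theory of ratios. First I would observe that $\varphi$ and $\chi$, being endomorphisms and hence homomorphisms of $M$ into itself, are embeddings by Theorem \ref{morphisms 1-1}; and since (by the standing assumption of this part of the paper) $M$ is Archimedean, Theorem \ref{homomorphisms are proportional} applies to each of them, so every embedding of $M$ into itself preserves all ratios.

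Now fix an arbitrary $x\in M$; the goal is $\varphi(\chi x)=\chi(\varphi x)$. Applying Theorem \ref{homomorphisms are proportional} to $\varphi$ with the pair $\chi x,x$ yields $\varphi(\chi x):\varphi x=\chi x:x$, and applying it to $\chi$ with the pair $\varphi x,x$ yields $\chi(\varphi x):\chi x=\varphi x:x$. I would then feed these two proportions into Proposition \ref{V 23} (perturbed ex aequali), taking $a=\varphi(\chi x)$, $b=\varphi x$, $c=x$ and $a'=\chi(\varphi x)$, $b'=\chi x$, $c'=x$: the hypothesis ``$a:b=b':c'$'' is precisely the first proportion, the hypothesis ``$b:c=a':b'$'' is precisely the second (read as an equality of ratios), and so the conclusion ``$a:c=a':c'$'' says $\varphi(\chi x):x=\chi(\varphi x):x$. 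By Proposition \ref{V 9} this forces $\varphi(\chi x)=\chi(\varphi x)$.

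Since $x$ was arbitrary, $(\varphi\chi)x=\varphi(\chi x)=\chi(\varphi x)=(\chi\varphi)x$ for every $x\in M$ by Definition \ref{definition composition}, and therefore $\varphi\chi=\chi\varphi$ by Definition \ref{definition equal functions}. (Alternatively, one could verify $\varphi(\chi x)=\chi(\varphi x)$ for a single $x$ and then invoke the uniqueness clause of Theorem \ref{unique morphism}, since $\varphi\chi$ and $\chi\varphi$ are embeddings by Theorem \ref{composition of homomophisms}; but the argument above already works verbatim for every $x$.)

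The one real choice is which form of ``ex aequali'' to use: the two proportions produced by Theorem \ref{homomorphisms are proportional} line up the chains $\varphi(\chi x),\varphi x,x$ and $\chi(\varphi x),\chi x,x$ in the \emph{perturbed} order, which is the configuration of Proposition \ref{V 23} rather than Proposition \ref{V 22}. Recognizing this --- in effect, that composing ``scaling by $\varphi$'' with ``scaling by $\chi$'' gives the same ratio in either order --- is the crux; the rest is bookkeeping with Definitions \ref{definition composition} and \ref{definition equal functions}.
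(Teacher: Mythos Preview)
Your proof is correct and follows essentially the same route as the paper: apply Theorem \ref{homomorphisms are proportional} to $\varphi$ on the pair $(\chi x,x)$ and to $\chi$ on the pair $(\varphi x,x)$, feed the resulting proportions into Proposition \ref{V 23} to get $\varphi(\chi x):x=\chi(\varphi x):x$, then conclude via Proposition \ref{V 9} and Definitions \ref{definition composition} and \ref{definition equal functions}. Your identification of the perturbed ex aequali (Proposition \ref{V 23}) as the right tool, and your explicit verification of its hypotheses, match the paper's argument exactly.
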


\begin{proof}
Assume $\varphi$ and $\chi$ are endomorphisms of $M$. Then for any $a\in M$,%
\[
\varphi\left(  \chi a\right)  :\varphi a=\chi a:a\text{ and }\varphi
a:a=\chi\left(  \varphi a\right)  :\chi a
\]
by Theorem \ref{homomorphisms are proportional},%
\[
\varphi\left(  \chi a\right)  :a=\chi\left(  \varphi a\right)  :a
\]
by Proposition \ref{V 23}, and%
\[
\varphi\left(  \chi a\right)  =\chi\left(  \varphi a\right)
\]
by Proposition \ref{V 9}. Therefore $\left(  \varphi\chi\right)  a=\left(
\chi\varphi\right)  a$ by Definition \ref{definition composition} and
$\varphi\chi=\chi\varphi$ by Definition\ \ref{definition equal functions}.
\end{proof}

\section{Continuous Magnitude Spaces}

In this section we come to the central theorem: If $a$ is an element of an
Archimedean magnitude space $M$ and $a^{\prime}$ is an element of a continuous
magnitude space $M^{\prime}$, then there is a unique embedding of $M$ into
$M^{\prime}$ which maps $a$ into $a^{\prime}$. Before tackling the main
theorem we need to establish some results concerning ratios which, although
very basic, have not been required prior to this section.

\begin{theorem}
If $a:b>a^{\prime}:b^{\prime}$ and $ma\leq nb$, then $ma^{\prime}<nb^{\prime}$.
\end{theorem}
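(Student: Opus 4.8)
The plan is to read the hypothesis $a:b>a^{\prime}:b^{\prime}$ through Definition \ref{revised euclid def of ratios}, which hands us natural numbers $j,k$ with $ja>kb$ and $ja^{\prime}\le kb^{\prime}$, and then to \emph{cross-multiply} this witnessing pair against the given inequality $ma\le nb$ so as to distill a pure inequality between natural numbers; once that is in hand, transporting it to the primed side is routine.

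In detail, I would scale $ja>kb$ by $n$ (Proposition \ref{V 5}) and rewrite via Proposition \ref{V 3} to get $(nj)a>(nk)b$, and scale $ma\le nb$ by $k$ to get $(km)a\le(kn)b$. To line these up I need $nk=kn$; this commutativity is the only ingredient not literally among the earlier results, but it is immediate: the map $m\mapsto nm$ is a homomorphism of $\mathbb{N}$ into itself by Proposition \ref{V 1}, hence an embedding (Theorem \ref{morphisms 1-1}) sending $1$ to $n$, hence equal to $\varphi_{1,n}$ by Theorem \ref{embedding discrete magnitude}, so $nm=\varphi_{1,n}(m)=mn$. Now $(km)a\le(kn)b=(nk)b<(nj)a$, so $(km)a<(nj)a$ by Theorem \ref{Transitivity mixed}, and therefore $km<nj$ by Proposition \ref{V 6}.

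For the conclusion, scale $ja^{\prime}\le kb^{\prime}$ by $n$ to get $(nj)a^{\prime}\le(nk)b^{\prime}$; since $km<nj$, Proposition \ref{V 6} gives $(km)a^{\prime}<(nj)a^{\prime}$, and Theorem \ref{Transitivity mixed} then gives $(km)a^{\prime}<(nk)b^{\prime}=(kn)b^{\prime}$, i.e.\ $k(ma^{\prime})<k(nb^{\prime})$, whence $ma^{\prime}<nb^{\prime}$ by Proposition \ref{V 5}. The only real difficulty I foresee is organizational: keeping the two cross-multiplications straight and calling on the right monotonicity statement at each turn (Proposition \ref{V 5} when scaling the fixed pair $a,b$ by a number, Proposition \ref{V 6} when comparing $ma$ with $na$ at fixed $a$), plus the small commutativity remark; there is no conceptual hurdle.
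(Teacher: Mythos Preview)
Your argument is correct. The core idea---cross-multiply the witness pair $j,k$ against $m,n$ to obtain a pure inequality $km<nj$ in $\mathbb{N}$, then carry that over to the primed side---is exactly what the paper does. The execution differs in one respect: the paper first invokes Propositions~\ref{V 15} and~\ref{V 13} to rewrite $a:b>a^{\prime}:b^{\prime}$ as $ma:mb>ma^{\prime}:mb^{\prime}$ \emph{before} extracting the witness pair, so that the witness already reads $j(ma)>k(mb)$ and $j(ma^{\prime})\le k(mb^{\prime})$. After scaling $ma\le nb$ by $j$, the chain $k(mb)<j(ma)\le j(nb)$ compares two multiples of the \emph{same} element $b$, and Proposition~\ref{V 6} yields $km<jn$ with no appeal to commutativity of multiplication in $\mathbb{N}$. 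Your route extracts the witness directly from $a:b>a^{\prime}:b^{\prime}$ and therefore needs the step $(kn)b=(nk)b$; you handle this cleanly by observing that $m\mapsto nm$ is an embedding sending $1$ to $n$ and hence equals $\varphi_{1,n}$, so $nm=mn$. Both approaches are sound; the paper's detour through Proposition~\ref{V 15} buys it freedom from that auxiliary lemma, while yours is more direct at the cost of supplying commutativity on the spot.
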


\begin{proof}
$ma:mb=a:b$ by Proposition \ref{V 15} and $a:b>a^{\prime}:b^{\prime}$ by
assumption. Hence $ma:mb>a^{\prime}:b^{\prime}$ by Proposition \ref{V 13}.
Likewise, $a^{\prime}:b^{\prime}=ma^{\prime}:mb^{\prime}$ by Proposition
\ref{V 15} and hence $ma:mb>ma^{\prime}:mb^{\prime}$ by Proposition
\ref{V 13}. Thus there exist $j$ and $k$ such that
\[
j\left(  ma\right)  >k\left(  mb\right)  \text{ and }j\left(  ma^{\prime
}\right)  \leq k\left(  mb^{\prime}\right)
\]
by Definition \ref{revised euclid def of ratios}. And by assumption, $ma\leq
nb$ and so $j\left(  ma\right)  \leq j\left(  nb\right)  $ by Proposition
\ref{V 5}. And also, from above, $j\left(  ma\right)  >k\left(  mb\right)  $
and hence $k\left(  mb\right)  <j\left(  nb\right)  $ by Theorem
\ref{Transitivity mixed} and hence $km<jn$ by Propositions \ref{V 3}\ and
\ref{V 6}. Hence $k\left(  mb^{\prime}\right)  <j\left(  nb^{\prime}\right)  $
by Propositions \ref{V 6} and \ref{V 3}. And also, from above, $j\left(
ma^{\prime}\right)  \leq k\left(  mb^{\prime}\right)  $ and hence $j\left(
ma^{\prime}\right)  <j\left(  nb^{\prime}\right)  $ by Theorem
\ref{Transitivity mixed} and hence $ma^{\prime}<nb^{\prime}$ by Proposition
\ref{V 5}; the very thing to be shown.
\end{proof}

\begin{theorem}
\label{Transitivity of ratios}If $a:b>a^{\prime}:b^{\prime}$ and $a^{\prime
}:b^{\prime}>a^{\prime\prime}:b^{\prime\prime}$, then $a:b>a^{\prime\prime
}:b^{\prime\prime}$.
\end{theorem}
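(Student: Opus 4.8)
The plan is to peel back Definition \ref{revised euclid def of ratios} at the two ends of the chain and to use the preceding theorem as the bridge. From the hypothesis $a^{\prime}:b^{\prime}>a^{\prime\prime}:b^{\prime\prime}$, Definition \ref{revised euclid def of ratios} hands us natural numbers $m,n$ with $ma^{\prime}>nb^{\prime}$ and $ma^{\prime\prime}\leq nb^{\prime\prime}$. The goal $a:b>a^{\prime\prime}:b^{\prime\prime}$ asks for natural numbers with $ma>nb$ and $ma^{\prime\prime}\leq nb^{\prime\prime}$; the second of these we already have for this very pair $m,n$, so it suffices to show $ma>nb$.

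To get $ma>nb$ I would run the preceding theorem in contrapositive form, using the other hypothesis $a:b>a^{\prime}:b^{\prime}$. Suppose, toward a contradiction, that $ma\leq nb$. Then the preceding theorem gives $ma^{\prime}<nb^{\prime}$, which contradicts $ma^{\prime}>nb^{\prime}$ by trichotomy (Theorem \ref{< is trichotomous}). Hence $ma\leq nb$ is impossible, and since $ma=nb$ would give $ma\leq nb$ as well, trichotomy together with Definition \ref{definition <= and >=} forces $ma>nb$.

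Now the pair $m,n$ satisfies $ma>nb$ and $ma^{\prime\prime}\leq nb^{\prime\prime}$, so $a:b>a^{\prime\prime}:b^{\prime\prime}$ by Definition \ref{revised euclid def of ratios}, which is what was to be shown. I do not expect a genuine obstacle here: the only delicate point is the backward (contrapositive) use of the preceding theorem, which requires invoking trichotomy twice — once to discard $ma^{\prime}<nb^{\prime}$ and once to pass from the failure of $ma\leq nb$ to $ma>nb$. Everything else is routine unwinding of the definition of ``greater ratio.''
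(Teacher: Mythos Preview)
Your argument is correct and uses the same ingredients as the paper: unpack one of the two ratio inequalities via Definition~\ref{revised euclid def of ratios} and feed the resulting $m,n$ through the preceding theorem. The only difference is which inequality you unpack. The paper starts from the \emph{first} hypothesis $a:b>a^{\prime}:b^{\prime}$, obtaining $m,n$ with $ma>nb$ and $ma^{\prime}\leq nb^{\prime}$; then the preceding theorem applied \emph{directly} to $a^{\prime}:b^{\prime}>a^{\prime\prime}:b^{\prime\prime}$ with $ma^{\prime}\leq nb^{\prime}$ yields $ma^{\prime\prime}<nb^{\prime\prime}$, and the conclusion follows. This avoids the contrapositive maneuver and the two appeals to trichotomy that your version needs, so it is a little cleaner, but the content is the same.
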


\begin{proof}
Assume $a:b>a^{\prime}:b^{\prime}$ and $a^{\prime}:b^{\prime}>a^{\prime\prime
}:b^{\prime\prime}$. From the first ratio inequality there are natural numbers
$m$ and $n$ such that $ma>nb$ and $ma^{\prime}\leq nb^{\prime}$ by Definition
\ref{revised euclid def of ratios}. And also $ma^{\prime\prime}<nb^{\prime
\prime}$ by the preceding theorem. Hence $ma>nb$ and $ma^{\prime\prime
}<nb^{\prime\prime}$ and therefore $a:b>a^{\prime\prime}:b^{\prime\prime}$
according to Definition \ref{revised euclid def of ratios}.
\end{proof}

\begin{theorem}
If there are natural numbers $j,k$ such that $ja>kb$ and $ja^{\prime
}=kb^{\prime}$, then there are natural numbers $m,n$ such that $ma>nb$ and
$ma^{\prime}<nb^{\prime}$.
\end{theorem}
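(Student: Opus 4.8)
The plan is to exploit the slack in the strict inequality $ja>kb$. Since $ja>kb$, Definition \ref{definition subtraction} furnishes a $d\in M$ with $ja=kb+d$. The idea is that scaling both indices by a large natural number $t$ preserves $ja>kb$ in amplified form (the slack becomes $td$), and then bumping the second index up by one extra copy of $b$ on each side forces the primed relation $ja'=kb'$ to become the desired strict inequality $ma'<nb'$, while costing the unprimed side only one copy of $b$, which the amplified slack $td$ can absorb. Concretely I would take $m=tj$ and $n=tk+1$ for an appropriately chosen $t$.

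To choose $t$: since we are working in an Archimedean magnitude space, Theorem \ref{some n such that na>b} supplies a natural number $t$ with $td>b$. Fix this $t$ and set $m=tj$ and $n=tk+1$; both are natural numbers, being built from $t,j,k$ by multiplication and addition in $\mathbb{N}$.

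For the unprimed inequality I would compute, using Proposition \ref{V 3}, that $ma=(tj)a=t(ja)=t(kb+d)$, which by Proposition \ref{V 1} equals $t(kb)+td$, and again by Proposition \ref{V 3} equals $(tk)b+td$. On the other side, by Proposition \ref{V 2} and $1b=b$ (Definition \ref{definition integral multiple}), $nb=(tk+1)b=(tk)b+b$. Since $td>b$, adding $(tk)b$ to both (Theorem \ref{equals added to equals or unequals}) gives $(tk)b+b<(tk)b+td$, i.e. $ma>nb$. For the primed inequality, similarly $ma'=(tj)a'=t(ja')$ by Proposition \ref{V 3}, and since $ja'=kb'$ by hypothesis this is $t(kb')=(tk)b'$; meanwhile $nb'=(tk+1)b'=(tk)b'+b'$, and $(tk)b'<(tk)b'+b'$ directly from Definition \ref{definition <}. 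Hence $ma'<nb'$, and $m,n$ are the required natural numbers.

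The only step carrying any real content is the selection of $t$; once the Archimedean property hands us $td>b$ everything else is routine bookkeeping with the distributive and associative laws for integral multiples (Propositions \ref{V 1}, \ref{V 2}, \ref{V 3}). I do not anticipate any genuine obstacle beyond keeping the index arithmetic $m=tj$, $n=tk+1$ straight on both the primed and unprimed sides.
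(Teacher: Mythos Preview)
Your proof is correct and follows essentially the same strategy as the paper: use the Archimedean property to amplify the slack $ja-kb$ enough to absorb a unit perturbation of one index, turning the equality $ja'=kb'$ into a strict inequality. The only cosmetic difference is that the paper perturbs the first index downward (taking $m=pj-1$, $n=pk$ after choosing $p$ with $p(ja-kb)>a$) whereas you perturb the second index upward (taking $m=tj$, $n=tk+1$ after choosing $t$ with $t(ja-kb)>b$); your version has the minor advantage of avoiding any check that the decremented index remains a natural number.
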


\begin{proof}
Assume $ja>kb$ and $ja^{\prime}=kb^{\prime}$. Since magnitude spaces are
assumed to be Archimedean, there is some natural number $p$ such that
$p\left(  ja-kb\right)  >1a$ by Theorem \ref{some n such that na>b}. Therefore%
\begin{align*}
p\left(  ja-kb\right)  >a  & \implies p\left(  ja\right)  -p\left(  kb\right)
>1a\text{ (Proposition \ref{V 5})}\\
& \implies\left(  pj\right)  a-\left(  pk\right)  b>1a\text{ (Proposition
\ref{V 3})}\\
& \implies\left(  pj\right)  a>\left(  pk\right)  b+1a\text{ (Theorem
\ref{moving b accross <=>})}\\
& \implies\left(  pj\right)  a-1a>\left(  pk\right)  b\text{ (Theorem
\ref{moving b accross <=>})}\\
& \implies\left(  pj-1\right)  a>\left(  pk\right)  b\text{. (Theorem
\ref{1a and (n+1)a})}%
\end{align*}
And since$\ ja^{\prime}=kb^{\prime}$,%
\begin{align*}
\left(  pj-1\right)  a^{\prime}  & <\left(  pj\right)  a^{\prime}\text{
(Theorem \ref{b-a < a} and Proposition \ref{V 6})}\\
& =p\left(  ja^{\prime}\right)  \text{ (Proposition \ref{V 3})}\\
& =p\left(  kb^{\prime}\right)  \text{ (}ja^{\prime}=kb^{\prime}\text{)}\\
& =\left(  pk\right)  b^{\prime}\text{ ((Proposition \ref{V 3})}%
\end{align*}
and hence $\left(  pj-1\right)  a^{\prime}<\left(  pk\right)  b^{\prime}$. We
now have two natural numbers $m=pj-1$ and $n=pk$ such that $ma>nb$ and
$ma^{\prime}<nb^{\prime}$.
\end{proof}

\begin{theorem}
\label{unequal ratios}If $a:b\neq a^{\prime}:b^{\prime}$, then $a:b>a^{\prime
}:b^{\prime}$ or $a^{\prime}:b^{\prime}>a:b$.
\end{theorem}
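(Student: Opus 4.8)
The statement is, in effect, the trichotomy law for ratios, so the plan is to unwind the failure of equality recorded by the hypothesis through Definition \ref{revised euclid def of ratios}. Since $a:b$ is not the same ratio as $a^{\prime}:b^{\prime}$, there are natural numbers $m,n$ for which $ma$ stands to $nb$ in a different one of the three relations ($<$, $=$, $>$) than $ma^{\prime}$ stands to $nb^{\prime}$. I would fix such a pair $m,n$ and split into cases on the relation between $ma$ and $nb$.

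First I would dispose of the cases that are immediate from the definition of ``greater ratio''. If $ma>nb$, then $ma^{\prime}$ cannot also exceed $nb^{\prime}$ (the two relations must differ), so $ma^{\prime}\leq nb^{\prime}$, and this pair already witnesses $a:b>a^{\prime}:b^{\prime}$. Dually, whenever $ma^{\prime}>nb^{\prime}$ and $ma\leq nb$ --- which happens automatically if $ma<nb$, and also if $ma=nb$ --- the pair witnesses $a^{\prime}:b^{\prime}>a:b$. The only configurations not yet covered are (a) $ma<nb$ with $ma^{\prime}=nb^{\prime}$, and (b) $ma=nb$ with $ma^{\prime}<nb^{\prime}$, and these two are mirror images of each other under interchanging the primed and unprimed pairs.

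The real content is case (b) (case (a) being symmetric): one side is an equality and the other a strict inequality, so no pair of multiples directly exhibits a greater-ratio relation. Here I would invoke the immediately preceding theorem, which converts a witness of the shape $ja>kb$, $ja^{\prime}=kb^{\prime}$ into a witness $ma>nb$, $ma^{\prime}<nb^{\prime}$. In case (b) we have $nb=ma$ and $nb^{\prime}>ma^{\prime}$; applying that theorem to the four magnitudes in the order $b^{\prime},a^{\prime},b,a$ (a legitimate instantiation, since it holds for arbitrary elements of any two magnitude spaces) with $j=n$, $k=m$ --- so that $jb^{\prime}=nb^{\prime}>ma^{\prime}=ka^{\prime}$ and $jb=nb=ma=ka$ --- yields natural numbers $m^{\prime},n^{\prime}$ with $m^{\prime}b^{\prime}>n^{\prime}a^{\prime}$ and $m^{\prime}b<n^{\prime}a$. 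Rewriting these as $n^{\prime}a>m^{\prime}b$ and $n^{\prime}a^{\prime}<m^{\prime}b^{\prime}$, the pair $(n^{\prime},m^{\prime})$ witnesses $a:b>a^{\prime}:b^{\prime}$. Case (a) runs identically with primed and unprimed swapped and gives $a^{\prime}:b^{\prime}>a:b$.

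The one delicate point is recognizing that the ``equality versus strict inequality'' configuration cannot be read off directly from Definition \ref{revised euclid def of ratios} and must be routed through the preceding theorem after reversing, inside each space, the roles of the two magnitudes; everything else is routine bookkeeping over the trichotomy. Since in every case one of $a:b>a^{\prime}:b^{\prime}$ and $a^{\prime}:b^{\prime}>a:b$ has been produced, the theorem follows.
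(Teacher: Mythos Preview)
Your proof is correct and follows essentially the same approach as the paper: enumerate the mismatch cases arising from Definition~\ref{revised euclid def of ratios}, dispose of the four cases in which one side already has a strict inequality witnessing $>$ with the other side $\leq$, and handle the two remaining ``equality versus strict inequality'' cases by invoking the preceding theorem after swapping the roles of the two magnitudes within each space. Your instantiation of the preceding theorem in case~(b) with the quadruple $(b',a',b,a)$ is exactly what the paper does (implicitly) for its case~3, and your case~(a) matches its case~1.
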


\begin{proof}
If $a:b\neq a^{\prime}:b^{\prime}$, then there are two natural numbers $j,k $
for which at least one of following six cases is true:%

\begin{align*}
\text{1) }ja<kb\text{ }  & \text{and}\text{ }ja^{\prime}=kb^{\prime}\\
\text{2) }ja<kb\text{ }  & \text{and}\text{ }ja^{\prime}>kb^{\prime}\\
\text{3) }ja=kb\text{ }  & \text{and}\text{ }ja^{\prime}<kb^{\prime}\\
\text{4) }ja=kb\text{ }  & \text{and}\text{ }ja^{\prime}>kb^{\prime}\\
\text{5) }ja>kb\text{ }  & \text{and}\text{ }ja^{\prime}<kb^{\prime}\\
\text{6) }ja>kb\text{ }  & \text{and}\text{ }ja^{\prime}=kb^{\prime}%
\end{align*}
In each of the cases 2, 4, \ 5, and 6, $a:b>a^{\prime}:b^{\prime}$ or
$a^{\prime}:b^{\prime}>a:b$ according to Definition
\ref{revised euclid def of ratios}. In case 1, there are two natural numbers
$m,n$ such that $na<mb$ and $na^{\prime}>mb^{\prime}$ by the preceding theorem
and hence $a^{\prime}:b^{\prime}>a:b$. In a similar fashion we can show that
in case 3 $a:b>a^{\prime}:b^{\prime}$.
\end{proof}

\begin{theorem}
\label{strong form of nondiscrete}If $M$ is nondiscrete, then for any $a\in M
$ and $n\in%
\mathbb{N}
$ there is some $b\in M$ such that $nb<a$.
\end{theorem}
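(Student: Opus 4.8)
The plan is to prove, using the induction principle of Theorem \ref{induction}, that the set
\[
A=\{\,n\in\mathbb{N}:\text{ for every }a\in M\text{ there is }b\in M\text{ with }nb<a\,\}
\]
is all of $\mathbb{N}$. The case $n=1$ is just the hypothesis restated: since $1b=b$ by Definition \ref{definition integral multiple}, asking for $b$ with $1b<a$ is asking that $a$ not be the smallest element of $M$, which holds for every $a\in M$ precisely because $M$ is nondiscrete. Hence $1\in A$, and the substance of the argument is the step $n\in A\implies n+1\in A$.

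For the step, assume $n\in A$ and fix $a\in M$. Applying $n\in A$ to $a$ produces $b_{0}$ with $nb_{0}<a$; put $r_{0}=a-nb_{0}$, so $nb_{0}+r_{0}=a$ by Definition \ref{definition subtraction}. If $b_{0}<r_{0}$, then translation invariance (Theorem \ref{Translation by a}) and Theorem \ref{1a and (n+1)a} give
\[
(n+1)b_{0}=nb_{0}+b_{0}<nb_{0}+r_{0}=a,
\]
so $b_{0}$ is the witness we want. Otherwise $r_{0}\leq b_{0}$, and now the key move is to apply $n\in A$ a second time, this time to the remainder $r_{0}$, obtaining $b_{1}$ with $nb_{1}<r_{0}$. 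Then $b_{1}\leq nb_{1}<r_{0}\leq b_{0}$ (using $1b_{1}\leq nb_{1}$ from Proposition \ref{V 6}), and repeated use of the added-to-equals-or-unequals estimates (Theorems \ref{Translation by a} and \ref{equals added to equals or unequals}) together with $b_{0}\leq nb_{0}$ yields
\[
(n+1)b_{1}=nb_{1}+b_{1}<r_{0}+b_{1}<r_{0}+r_{0}\leq b_{0}+r_{0}\leq nb_{0}+r_{0}=a,
\]
where the overall relation is $<$ by Theorems \ref{Transitivity of >} and \ref{Transitivity mixed}. So $b_{1}$ is a witness; in either case $n+1\in A$, and Theorem \ref{induction} gives $A=\mathbb{N}$.

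The one place needing a real idea is the inductive step. A single application of the hypothesis yields only some $b_{0}$ with $nb_{0}<a$, with no control on the leftover $r_{0}=a-nb_{0}$ compared with $b_{0}$, so $(n+1)b_{0}<a$ may fail. The remedy is to compare $b_{0}$ and $r_{0}$ by trichotomy: when $b_{0}<r_{0}$ there is room for one more copy of $b_{0}$, and when $r_{0}\leq b_{0}$ one applies the hypothesis again to $r_{0}$ to shrink the step; after that the estimate is a routine chain built from $(n+1)x=nx+x$, $x\leq nx$, and translation invariance. It is perhaps worth remarking that although this section works entirely with Archimedean magnitude spaces, the Archimedean property is not used here: trichotomy alone forces a nondiscrete magnitude space to be divisible in this strong sense.
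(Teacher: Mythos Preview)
Your proof is correct, and it takes a genuinely different route through the inductive step than the paper does. Both arguments use Theorem \ref{induction} with the same base case, but diverge at $n\Rightarrow n+1$. The paper, having $nb<a$, invokes nondiscreteness three more times to manufacture an $e$ with $e+e<b$ (take $c<b$, set $d=\min(b-c,c)$, take $e<d$); then $(n+1)e\leq(n+n)e=n(e+e)<nb<a$. You instead look at the remainder $r_{0}=a-nb_{0}$ and split on whether $b_{0}<r_{0}$: if so, $b_{0}$ already works for $n+1$; if not, you apply the induction hypothesis a \emph{second} time, now to $r_{0}$, and the resulting $b_{1}$ serves. Your approach trades the paper's direct ``halving'' construction for a case analysis and a second call to the hypothesis; it never touches nondiscreteness again after the base case, which is a pleasant feature. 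The paper's argument is slightly more uniform (no case split) but needs the extra work of building $e$. Both produce inequality chains of comparable length, and neither uses the Archimedean property, as you correctly observe.
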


\begin{proof}
Assume $M$ is nondiscrete. Then the theorem is true for $n=1$ since there is
some element $b<a$ by Definition \ref{definition discrete} and $1b=b$ by
Definition \ref{definition integral multiple}. Now suppose the theorem is true
for $n$ and $nb<a.$ There is some $c\in M$ such that $c<b$ by Definition
\ref{definition discrete}. Let $d$ be the smaller of $\left(  b-c\right)  $
and $c$ so that $d\leq b-c$ and $d\leq c$. There is some $e\in M $ such that
$e<d$ by Definition \ref{definition discrete} and hence $e<b-c$ and $e<c$ by
Theorem \ref{Transitivity mixed}. Hence
\begin{align*}
e+e  & <\left(  b-c\right)  +e\text{ (}e<b-c\text{ and Theorem
\ref{equals added to equals or unequals})}\\
& <\left(  b-c\right)  +c\text{ (}e<c\text{ and Theorem
\ref{equals added to equals or unequals})}\\
& =b\text{. (Definition \ref{definition subtraction})}%
\end{align*}
And hence $e+e<b$ by Theorem \ref{Transitivity of >}. Now $1\leq n$ by
Definition \ref{definition natural numbers} and hence $n+1\leq n+n$ by Theorem
\ref{equals added to equals or unequals}. Therefore
\begin{align*}
\left(  n+1\right)  e  & \leq\left(  n+n\right)  e\text{ (Proposition
\ref{V 6})}\\
& =ne+ne\text{ (Proposition \ref{V 2})}\\
& =n\left(  e+e\right)  \text{ (Proposition \ref{V 1})}\\
& <nb\text{ (Proposition \ref{V 5})}\\
& <a\text{ (assumption)}%
\end{align*}
and so $\left(  n+1\right)  e<a$ by Theorem \ref{Transitivity mixed} and hence
the theorem is true for $n+1$. Therefore the theorem is true for all $n$ by
Theorem \ref{induction}.
\end{proof}

\begin{theorem}
\label{continuity of ratios}If $M$ and $M^{\prime}$ are magnitude spaces, $M $
is nondiscrete, $a,b\in M$, $a^{\prime},b^{\prime}\in M^{\prime}$, then if
$a:b>a^{\prime}:b^{\prime}$ there exists some $c\in M$ such that
$a:b>c:b>a^{\prime}:b^{\prime}$, and if $a:b<a^{\prime}:b^{\prime}$ there
exists some $c\in M$ such that $a:b<c:b<a^{\prime}:b^{\prime}$.
\end{theorem}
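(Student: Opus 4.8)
The plan is to build the intermediate magnitude $c$ as a small perturbation of $a$ carried out \emph{inside} $M$ — shrink $a$ slightly in the first case, enlarge it slightly in the second. That the perturbation must live in $M$ and not $M'$ is forced by the hypotheses, since Theorem~\ref{strong form of nondiscrete} (the tool producing arbitrarily small magnitudes, indeed small \emph{multiples}) requires nondiscreteness, and only $M$ is assumed nondiscrete. I will also use throughout that a fixed denominator preserves order: by Propositions~\ref{V 8}, \ref{V 9}, \ref{V 10} with trichotomy, $x:b$ stands to $y:b$ in the same relation ($<$, $=$, $>$) as $x$ to $y$, so exhibiting $c$ with $a:b>c:b$ is the same as exhibiting $c<a$, and $a:b<c:b$ the same as $c>a$.

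First case: assume $a:b>a':b'$. By Definition~\ref{revised euclid def of ratios} fix $m,n\in\mathbb{N}$ with $ma>nb$ and $ma'\le nb'$, and set $d=ma-nb$. Theorem~\ref{strong form of nondiscrete} gives $\varepsilon\in M$ with $m\varepsilon<d$; since $d<ma$ (Theorem~\ref{b-a < a}) we get $m\varepsilon<ma$, hence $\varepsilon<a$ by Proposition~\ref{V 5}, so $c:=a-\varepsilon$ is defined, $c<a$, and $a:b>c:b$. Moreover $mc=m(a-\varepsilon)=ma-m\varepsilon$ (Proposition~\ref{V 5}); adding $nb$ to $m\varepsilon<d=ma-nb$ yields $m\varepsilon+nb<ma$ (Theorem~\ref{equals added to equals or unequals}, Definition~\ref{definition subtraction}), and then $ma-m\varepsilon>nb$ by Theorem~\ref{moving b accross <=>}. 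Thus $mc>nb$ while $ma'\le nb'$, i.e. $c:b>a':b'$ by Definition~\ref{revised euclid def of ratios}, giving $a:b>c:b>a':b'$.

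Second case: assume $a:b<a':b'$, i.e. $a':b'>a:b$, so there are $m,n$ with $ma'>nb'$ and $ma\le nb$. This is the one genuine difficulty: the weak inequality now sits on the $M$-side, and the obvious choice $c=a+\varepsilon$ has $mc=ma+m\varepsilon$, which overshoots $nb$ whenever $ma=nb$, no matter how small $\varepsilon$ is. To remove it, in the case $ma=nb$ I will invoke the theorem stated just before Theorem~\ref{unequal ratios} — the one passing from $ja>kb$, $ja'=kb'$ to fresh $m,n$ with $ma>nb$, $ma'<nb'$ — but with the pairs $(a,b)$ and $(a',b')$ interchanged, so that $ma'>nb'$ and $ma=nb$ produce $m,n$ with $ma'>nb'$ and $ma<nb$ strictly (legitimate since $M'$, like every magnitude space from this section on, is Archimedean). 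Now set $d=nb-ma$, take $\varepsilon\in M$ with $m\varepsilon<d$, and let $c:=a+\varepsilon$, so $c>a$ and $a:b<c:b$. Then $mc=ma+m\varepsilon<ma+d=nb$ (Proposition~\ref{V 1}, Theorem~\ref{equals added to equals or unequals}, Definition~\ref{definition subtraction}), so $mc\le nb$ while $ma'>nb'$, whence $c:b<a':b'$ by Definition~\ref{revised euclid def of ratios}, giving $a:b<c:b<a':b'$. The highlighted step — upgrading $ma\le nb$ to a strict inequality so the perturbation has room — is the only part that is not a direct unwinding of definitions.
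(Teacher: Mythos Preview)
Your proof is correct and follows essentially the same approach as the paper: perturb $a$ by a small element of $M$ furnished by Theorem~\ref{strong form of nondiscrete} and verify the ratio inequalities directly from Definition~\ref{revised euclid def of ratios} and Proposition~\ref{V 8}. The paper dispatches the second case with ``proved in a similar manner''; your explicit upgrade of $ma\le nb$ to $ma<nb$ via the theorem preceding Theorem~\ref{unequal ratios} fills in a detail the paper leaves to the reader.
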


\begin{proof}
If $a:b>a^{\prime}:b^{\prime}$, then there are $m,n\in%
\mathbb{N}
$ such that
\[
ma>nb\text{ and }ma^{\prime}\leq nb^{\prime}%
\]
by Definition \ref{revised euclid def of ratios}. From the preceding theorem,
there is some $d\in M$ such that%
\[
ma-nb>md.
\]
And%
\begin{align*}
ma-nb>md  & \implies ma>md+nb\text{ (Theorem \ref{moving b accross <=>})}\\
& \implies ma-md>nb\text{ (Theorem \ref{moving b accross <=>})}\\
& \implies m\left(  a-d\right)  >nb\text{. (Proposition \ref{V 5})}%
\end{align*}
Letting $c=a-d$ we have
\[
mc>nb\text{ and }ma^{\prime}\leq nb^{\prime}%
\]
and hence $c:b>a^{\prime}:b^{\prime}$. And since $a>c$, from Proposition
\ref{V 8}, $a:b>c:b$. The second part of the theorem is proved in a similar manner.
\end{proof}

\ 

With these preliminaries out of the way, we are ready for the main theorem in
this section.

\begin{theorem}
If $M$ is a magnitude space, $M^{\prime}$ is a continuous magnitude space,
$a\in M$, and $a^{\prime}\in M^{\prime}$, then for each $b\in M$ there is a
fourth proportional $b^{\prime}\in M^{\prime}$ to $a$, $b$, and $a^{\prime}$.
\end{theorem}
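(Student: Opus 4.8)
The plan is to realize $b'$ as a least upper bound in the complete space $M'$, in the manner of a Dedekind cut. Define
\[
A' = \{\, x' \in M' : x':a' < b:a \,\},
\]
which, unwinding Definition~\ref{revised euclid def of ratios}, is the set of $x' \in M'$ for which there are $m,n \in \mathbb{N}$ with $mb > na$ and $mx' \le na'$. First I would check that $A'$ is nonempty and has an upper bound. For nonemptiness, Theorem~\ref{some n such that na>b} gives $m_0$ with $m_0 b > a$, and since $M'$ is nondiscrete Theorem~\ref{strong form of nondiscrete} supplies $x' \in M'$ with $m_0 x' < a'$; then the pair $(m,n) = (m_0, 1)$ witnesses $x' \in A'$. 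For an upper bound, choose (again by Theorem~\ref{some n such that na>b}) a number $n_0$ with $n_0 a > b$ and set $u' = n_0 a'$; if $x' \in A'$ with witnesses $m,n$, then $na < mb < m(n_0 a) = (mn_0)a$ forces $n < mn_0$ by Propositions~\ref{V 5} and~\ref{V 6}, so $mx' \le na' < (mn_0)a' = m u'$ and hence $x' < u'$ by Proposition~\ref{V 5}. By completeness of $M'$ (Definition~\ref{definition complete}) the set $A'$ then has a least upper bound $b'$.

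It remains to prove $a:b = a':b'$; by Theorem~\ref{unequal ratios} it suffices to rule out $a:b > a':b'$ and $a':b' > a:b$. In either case I would first normalize the witnesses supplied by Definition~\ref{revised euclid def of ratios}: the borderline possibility that the accompanying inequality is actually an equality $m_0 a = n_0 b$ can be eliminated by the theorem immediately preceding Theorem~\ref{unequal ratios} (together with Theorem~\ref{some n such that na>b}), or by its mirror image with the roles of $M$ and $M'$ interchanged, so that one may assume $m_0 a > n_0 b$ together with $m_0 a' < n_0 b'$ in the first case, and $m_0 a' > n_0 b'$ together with $m_0 a < n_0 b$ in the second. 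If $a':b' > a:b$, so $b'$ is too small, put $\gamma' = m_0 a' - n_0 b'$ (Definition~\ref{definition subtraction}), take $\epsilon' \in M'$ with $n_0 \epsilon' < \gamma'$ by Theorem~\ref{strong form of nondiscrete}, and verify that $x' = b' + \epsilon'$ lies in $A'$ --- via the pair $(n_0, m_0)$, using Proposition~\ref{V 1} to see $n_0 x' = n_0 b' + n_0 \epsilon' < m_0 a'$ --- while $x' > b'$; this contradicts $b'$ being an upper bound. If $a:b > a':b'$, so $b'$ is too big, then the normalized witnesses give $b':a' > b:a$, and Theorem~\ref{continuity of ratios}, applied inside the nondiscrete space $M'$, yields $c' \in M'$ with $b':a' > c':a' > b:a$; then $c' < b'$ by Proposition~\ref{V 10}, and $c'$ is an upper bound of $A'$ since $x':a' < b:a < c':a'$ gives $x' < c'$ by Proposition~\ref{V 10} --- contradicting that $b'$ is the \emph{least} upper bound. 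Hence $a:b = a':b'$ and $b'$ is the required fourth proportional.

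The step I expect to be the main obstacle is this second half. Since $M$ is only a magnitude space and may well be discrete, one cannot interpolate ratios on the $M$-side, so the interpolation has to be forced into $M'$ using its nondiscreteness, and converting a rational lower estimate for $x':a'$ into an honest inequality such as $n_0 x' < m_0 a'$ leans on careful bookkeeping with the integral-multiple identities (Propositions~\ref{V 1}, \ref{V 3}, \ref{V 5}, \ref{V 6}) and on the Archimedean property. The fiddliest point is exactly the borderline case $m_0 a = n_0 b$, where $b:a$ is in effect a rational ratio; it is there that the theorem preceding Theorem~\ref{unequal ratios} and Theorem~\ref{some n such that na>b} do the real work.
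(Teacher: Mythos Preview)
Your proposal is correct and follows the same strategy as the paper: build the Dedekind-style set $A'=\{x'\in M': x'{:}a'<b{:}a\}$, take $b'$ to be its least upper bound in the complete space $M'$, and then use Theorem~\ref{unequal ratios} to reduce to the two cases $b'{:}a'<b{:}a$ and $b'{:}a'>b{:}a$, deriving a contradiction in each.

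The only noteworthy difference is tactical. You treat the two cases asymmetrically---one by a hand-built $\epsilon'$-perturbation (which forces you to first sharpen the witness $m_0a\le n_0b$ to a strict inequality via the lemma preceding Theorem~\ref{unequal ratios}), the other by Theorem~\ref{continuity of ratios}. The paper instead invokes Theorem~\ref{continuity of ratios} \emph{uniformly} in both cases: if $b'{:}a'<b{:}a$ it produces $c'$ with $b'{:}a'<c'{:}a'<b{:}a$, so $c'\in A'$ with $c'>b'$; if $b'{:}a'>b{:}a$ it produces $c'$ with $b'{:}a'>c'{:}a'>b{:}a$, so $c'$ is an upper bound of $A'$ with $c'<b'$. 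This sidesteps the witness-normalization entirely, since Theorem~\ref{continuity of ratios} already absorbs the borderline-equality issue you flagged. Your explicit upper bound $u'=n_0a'$ is also fine; the paper gets the same effect by introducing the companion set $B=\{c':c'{:}a'>b{:}a\}$ and observing every element of $B$ bounds $A'$.
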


\begin{proof}
Assume $M^{\prime}$ is continuous and hence complete and nondiscrete by
Definition \ref{definition continuous}. Let $a,b\in M$ and $a^{\prime}\in
M^{\prime}$ be given. Let%
\[
A=\left\{  c^{\prime}\in M^{\prime}\mid c^{\prime}:a^{\prime}<b:a\right\}
\text{ and }B=\left\{  c^{\prime}\in M^{\prime}\mid c^{\prime}:a^{\prime
}>b:a\right\}  \text{.}%
\]
We first show that each of these sets is nonempty. Since $M$ is Archimedean,
there is some natural number $m$ such that $mb>1a$ by Theorem
\ref{some n such that na>b}. And since $M^{\prime}$ is nondiscrete, there is
some $c^{\prime}\in M^{\prime}$ such that $mc^{\prime}<1a^{\prime}$ from
Theorem \ref{strong form of nondiscrete}. Then $mb>1a$ and $mc^{\prime
}<1a^{\prime}$; hence $b:a>c^{\prime}:a^{\prime}$ by Definition
\ref{revised euclid def of ratios} and $c^{\prime}\in A$. Likewise, there is
some natural number $n$ such that $1b<na$ by Theorem
\ref{some n such that na>b}. Let $c^{\prime}=na^{\prime}+a^{\prime}$. Then
$1c^{\prime}>na^{\prime}$ and $1b<na$; hence $c^{\prime}:a^{\prime}>b:a$ by
Definition \ref{revised euclid def of ratios} and $c^{\prime}\in B$.

If $c^{\prime}\in A$ and $d^{\prime}\in B$, then $c^{\prime}:a^{\prime}<b:a$
and $b:a<d^{\prime}:a^{\prime}$. Thus $c^{\prime}:a^{\prime}<d^{\prime
}:a^{\prime}$ from Theorem \ref{Transitivity of ratios} and hence $c^{\prime
}<d^{\prime}$ from Proposition \ref{V 10}. Thus every element of $B$ is an
upper bound of $A$ and every element of $A$ is a lower bound of $B$ according
to Definition \ref{definition bounds}.

Since $M^{\prime}$ is complete, $A$ has a least upper bound $b^{\prime}$ by
Definition \ref{definition complete}. I say $b^{\prime}$ is a fourth
proportional to $a$, $b$, and $a^{\prime}$. That is, $a:b=a^{\prime}%
:b^{\prime}$ or equivalently $b^{\prime}:a^{\prime}=b:a$. Suppose that
$b^{\prime}$ is not a fourth proportional to $a$, $b$, and $a^{\prime}$. Then
$b^{\prime}:a^{\prime}<b:a$ or $b^{\prime}:a^{\prime}>b:a$ from Theorem
\ref{unequal ratios}, or what is the same $b^{\prime}\in A$ or $b^{\prime}\in
B$.

If $b^{\prime}\in A$ (i.e. $b^{\prime}:a^{\prime}<b:a$), then there is a
$c^{\prime}\in M^{\prime}$ such that $b^{\prime}:a^{\prime}<c^{\prime
}:a^{\prime}<b:a$ from Theorem \ref{continuity of ratios} and hence
$c^{\prime}\in A$ and $b^{\prime}<c^{\prime}$. But $b^{\prime}$ is an upper
bound for $A$ which is a contradiction. Therefore $b^{\prime}\notin A$.

If $b^{\prime}\in B$ (i.e. $b^{\prime}:a^{\prime}>b:a$), then from Theorem
\ref{continuity of ratios} there is a $c^{\prime}\in M^{\prime}$ such that
$b^{\prime}:a^{\prime}>c^{\prime}:a^{\prime}>b:a$ and hence $c^{\prime}\in B$
and $b^{\prime}>c^{\prime}$. But since $c^{\prime}\in B$, $c^{\prime}$ is an
upper bound of $A$ and it is smaller than the least upper bound of $A$ which
is a contradiction. Therefore $b^{\prime}\notin B$.

We have now shown that $b^{\prime}$ is not an element of $A$ or $B$ which
contradicts $b^{\prime}$ not being the fourth proportional to $a$, $b$, and
$a^{\prime}$. Therefore, indeed, $b^{\prime}$ is the fourth proportional to
$a$, $b$, and $a^{\prime}$.
\end{proof}

\begin{theorem}
\label{complete f(a)=b}If $M$ is a magnitude space, $M^{\prime}$ is a
continuous magnitude space, $a\in M$, and $a^{\prime}\in M^{\prime}$, then
there exists a unique embedding $\varphi:M\rightarrow M^{\prime}$ such that
$\varphi a=a^{\prime}$.
\end{theorem}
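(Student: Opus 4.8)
The plan is to assemble this statement directly from the three results that immediately precede it, so the proof will be short. For existence, I would begin by invoking the preceding theorem: since $M$ is a magnitude space, $M^{\prime}$ is continuous, and $a\in M$, $a^{\prime}\in M^{\prime}$ are the given elements, that theorem guarantees that for every $b\in M$ there is a fourth proportional $b^{\prime}\in M^{\prime}$ to $a$, $b$, and $a^{\prime}$. Now $a$ and $a^{\prime}$ are fixed and a fourth proportional exists for each $b$, so the hypotheses of Theorem \ref{embedding from fourth proportional} are met, and it yields an embedding $\varphi:M\rightarrow M^{\prime}$ with $\varphi a=a^{\prime}$. That settles existence.

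For uniqueness, I would appeal to Theorem \ref{unique morphism}. Suppose $\psi:M\rightarrow M^{\prime}$ is any embedding with $\psi a=a^{\prime}$. Then $\varphi a=a^{\prime}=\psi a$, so $\varphi$ and $\psi$ are two embeddings of $M$ into $M^{\prime}$ agreeing at the single point $a$; the parenthetical conclusion of Theorem \ref{unique morphism} then gives $\varphi b=\psi b$ for all $b\in M$, and hence $\varphi=\psi$ by Definition \ref{definition equal functions}.

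I do not expect any genuine obstacle here: all the substantive work — extracting a least upper bound from completeness to build the fourth proportional, converting the family of fourth proportionals into a homomorphism, and the rigidity of embeddings that are pinned down at one point — has already been carried out in the preceding theorems. The only thing to be careful about is the bookkeeping of which earlier result is being cited at the existence step, since the fourth-proportional existence theorem immediately above this one carries no \verb|\label|, so it should be referred to simply as "the preceding theorem" in the manner used elsewhere in the paper rather than via \verb|\ref|.
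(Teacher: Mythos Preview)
Your proposal is correct and matches the paper's own proof exactly: the paper's proof is the single line ``Preceding theorem, Theorem \ref{embedding from fourth proportional}, and Theorem \ref{unique morphism},'' citing precisely the three results you identified, in the same roles (fourth-proportional existence, conversion to an embedding, and uniqueness). Your observation about referring to the unlabeled fourth-proportional theorem as ``the preceding theorem'' is also spot on.
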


\begin{proof}
Preceding theorem, Theorem \ref{embedding from fourth proportional}, and
Theorem \ref{unique morphism}.
\end{proof}

\begin{theorem}
\label{complete embedding is onto}If $M$ and $M^{\prime}$ are continuous
magnitude spaces, $a\in M$ and $a^{\prime}\in M^{\prime}$, then there exists a
unique isomorphism $\varphi:M\rightarrow M^{\prime}$ such that $\varphi
a=a^{\prime}$.
\end{theorem}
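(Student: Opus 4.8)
The plan is to leverage Theorem \ref{complete f(a)=b} to obtain the embedding and then promote it to an isomorphism by the same symmetry argument used in Theorem \ref{well ordered magnitude spaces isomorphic}. Since $M$ is continuous it is in particular a magnitude space, and $M'$ is continuous; so Theorem \ref{complete f(a)=b} gives a unique embedding $\varphi:M\rightarrow M'$ with $\varphi a=a'$. Symmetrically, since $M'$ is a magnitude space and $M$ is continuous, Theorem \ref{complete f(a)=b} gives an embedding $\varphi':M'\rightarrow M$ with $\varphi' a'=a$. The composition $\varphi\varphi':M'\rightarrow M'$ is an embedding by Theorem \ref{composition of homomophisms}, and it maps $a'$ to $\varphi(\varphi' a')=\varphi a=a'$. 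But the identity $i_{M'}$ is also an embedding (Theorem \ref{identity homomorphism}) fixing $a'$. By the uniqueness clause of Theorem \ref{unique morphism} (or equivalently Theorem \ref{complete f(a)=b}), $\varphi\varphi'=i_{M'}$. Then Theorem \ref{proving function is onto} shows $\varphi$ is onto, so $\varphi$ is an isomorphism by Definition \ref{definition isomorphism isomorphic}.

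For uniqueness of the isomorphism, observe that any isomorphism $\psi:M\rightarrow M'$ with $\psi a=a'$ is in particular an embedding with $\psi a=a'$, and Theorem \ref{complete f(a)=b} already asserts there is a \emph{unique} embedding with that property; hence $\psi=\varphi$. So the existence-and-uniqueness statement follows immediately once onto-ness is secured.

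The only mild subtlety — the step I would flag as the "obstacle," though it is really just bookkeeping — is making sure the hypotheses of Theorem \ref{complete f(a)=b} are met in \emph{both} directions. Going $M\to M'$ requires $M'$ continuous, which holds; going $M'\to M$ requires $M$ continuous, which also holds since both are assumed continuous. (This is exactly the point at which the argument differs from the well-ordered case, where one direction sufficed because of the universal mapping property out of $\mathbb{N}$; here the symmetric hypothesis is what makes the composition trick work.) Everything else — associativity of composition, the identity being an automorphism, the "$\psi\varphi = i_S \Rightarrow \psi$ onto" lemma — is already in hand, so the proof is a short assembly.

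Concretely, I would write: apply Theorem \ref{complete f(a)=b} twice to get $\varphi:M\rightarrow M'$ with $\varphi a=a'$ and $\varphi':M'\rightarrow M'$... wait, $\varphi':M'\rightarrow M$ with $\varphi' a'=a$; form $\varphi\varphi'$, note it is an embedding fixing $a'$, invoke uniqueness to identify it with $i_{M'}$, conclude $\varphi$ is onto via Theorem \ref{proving function is onto}, hence an isomorphism; then note any such isomorphism is an embedding fixing $a$ and apply the uniqueness in Theorem \ref{complete f(a)=b} once more. This mirrors the proof of Theorem \ref{well ordered magnitude spaces isomorphic} almost verbatim, with Theorem \ref{complete f(a)=b} playing the role that Theorem \ref{embedding discrete magnitude} played there.
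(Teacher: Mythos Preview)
Your argument is correct and essentially identical to the paper's: obtain embeddings $\varphi:M\to M'$ and $\varphi':M'\to M$ from Theorem \ref{complete f(a)=b}, compose to get an endomorphism of $M'$ fixing $a'$, identify it with $i_{M'}$ via Theorem \ref{unique morphism}, and conclude $\varphi$ is onto by Theorem \ref{proving function is onto}. The paper leaves the uniqueness of the isomorphism implicit (it follows at once since any isomorphism is an embedding), whereas you spell it out; otherwise the proofs coincide.
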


\begin{proof}
Since $M^{\prime}$ is continuous, there exists a unique embedding
$\varphi:M\rightarrow M^{\prime}$ such that $\varphi a=a^{\prime}$ by the
preceding theorem. And since $M$ is continuous, there is also a unique
embedding $\chi:M^{\prime}\rightarrow M$ such that $\chi a^{\prime}=a$ by the
preceding theorem. But then $\varphi\circ\chi$ is an embedding of $M^{\prime}$
into $M^{\prime}$ by Theorem \ref{composition of homomophisms}. And
$\varphi\circ\chi$ maps $a^{\prime}$ into $a^{\prime}$. But the identity map
$i_{M^{\prime}}$ on $M^{\prime}$ is an embedding by Theorem
\ref{identity homomorphism} and also $i_{M^{\prime}}$ maps $a^{\prime}$ into
$a^{\prime}$. Hence $\varphi\circ\chi=i_{M^{\prime}}$ by Theorem
\ref{unique morphism}. Hence $\varphi$ is onto by Theorem
\ref{proving function is onto} and therefore is an isomorphism according to
Definition \ref{definition isomorphism isomorphic}.
\end{proof}

\begin{theorem}
\label{continuous magnitude spaces isomorhpic}Any two continuous magnitude
spaces are isomorphic.
\end{theorem}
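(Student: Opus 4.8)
The plan is to read this off directly from the preceding result, Theorem~\ref{complete embedding is onto}. That theorem already produces, for continuous magnitude spaces $M$ and $M'$ and any chosen $a\in M$, $a'\in M'$, a (unique) isomorphism $\varphi:M\rightarrow M'$ with $\varphi a=a'$. So the only work is to make such a choice and then cite the definition of "isomorphic."

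Concretely, first I would invoke Definition~\ref{definition magnitude}: a magnitude space is a \emph{nonempty} set, so both $M$ and $M'$ contain at least one element; fix $a\in M$ and $a'\in M'$. Next I would apply Theorem~\ref{complete embedding is onto} with these $a$, $a'$ to obtain an isomorphism $\varphi:M\rightarrow M'$ (the uniqueness and the normalization $\varphi a=a'$ are not needed for the present statement, only the existence of \emph{some} isomorphism). Finally, by Definition~\ref{definition isomorphism isomorphic}, the existence of an isomorphism from $M$ onto $M'$ is precisely the assertion that $M$ is isomorphic to $M'$, which completes the proof.

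I do not expect any genuine obstacle here; the theorem is essentially a restatement of Theorem~\ref{complete embedding is onto} with the base points suppressed. The one point worth stating explicitly in the write-up is the appeal to nonemptiness of magnitude spaces, since that is what guarantees the elements $a$ and $a'$ needed to apply the previous theorem actually exist.
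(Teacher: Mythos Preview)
Your proposal is correct and follows essentially the same approach as the paper: pick elements $a\in M$ and $a'\in M'$, invoke the preceding theorem to obtain an isomorphism, and conclude via Definition~\ref{definition isomorphism isomorphic}. The paper's write-up differs only cosmetically (it first cites Theorem~\ref{complete f(a)=b} for the embedding and then the preceding theorem to upgrade it to an isomorphism, whereas you cite Theorem~\ref{complete embedding is onto} directly), and your explicit appeal to nonemptiness is a nice touch the paper leaves implicit.
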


\begin{proof}
Let $M$ and $M^{\prime}$ be any two continuous magnitude spaces. Pick any
$a\in M$ and $a^{\prime}\in M^{\prime}$. There is an embedding $\varphi
:M\rightarrow M^{\prime}$ such that $\varphi a=a^{\prime}$ by Theorem
\ref{complete f(a)=b}. The embedding $\varphi$ is an isomorphism by the
preceding theorem and therefore $M$ is isomorphic to $M^{\prime}$ according to
Definition \ref{definition isomorphism isomorphic}.
\end{proof}

\newpage

\section{Real Numbers}

Let us now review the definition of the natural numbers and define the
positive real numbers.

\begin{enumerate}
\item Well ordered magnitude spaces are complete (Theorem
\ref{well ordered implies discrete and complete}) and hence Archimedean
(Theorem \ref{Holder}).

\item If $M$ is a well ordered magnitude space, $M^{\prime}$ is any magnitude
space (not necessarily Archimedean), $a$ is the smallest element in $M$, and
$a^{\prime}$ is any element in $M^{\prime}$, then there exists a unique
embedding $\varphi:M\rightarrow M^{\prime}$ such that $\varphi a=a^{\prime}$
(Theorem \ref{embedding discrete magnitude}).

\item Any two well ordered magnitude spaces are isomorphic (Theorem
\ref{well ordered magnitude spaces isomorphic}).
\end{enumerate}

Because a well ordered magnitude space can be embedded into any magnitude
space, we say that well ordered magnitude spaces are \textbf{minimal}
magnitude spaces. And for the same reason we also say that the well ordered
magnitude spaces are minimal Archimedean magnitude spaces. We defined the
natural numbers as an arbitrary representative of the well ordered magnitude spaces.

\begin{enumerate}
\item Continuous magnitude spaces are complete (Definition
\ref{definition continuous}) and hence Archimedean (Theorem \ref{Holder}).

\item If $M$ is an Archimedean magnitude space, $M^{\prime}$ is a continuous
magnitude space, $a\in M$, and $a^{\prime}\in M^{\prime}$, then there exists a
unique embedding $\varphi:M\rightarrow M^{\prime}$ such that $\varphi
a=a^{\prime}$ (Theorem \ref{complete f(a)=b}).

\item Any two continuous magnitude spaces are isomorphic (Theorem
\ref{continuous magnitude spaces isomorhpic}).
\end{enumerate}

Because any Archimedean magnitude space can be embedded into a continuous
magnitude space, we say that continuous magnitudes are \textbf{maximal}
Archimedean magnitude spaces. We now define the positive real numbers as an
arbitrary representative of the continuous magnitude spaces.

\begin{definition}
\label{definition real number}Pick any continuous magnitude space and denote
it by $%
\mathbb{R}
_{+}$ and pick any element of $%
\mathbb{R}
_{+}$ and denote it by $1$. Since all continuous magnitude spaces are
isomorphic (Theorem \ref{continuous magnitude spaces isomorhpic}) and we will
use only the algebraic properties that the continuous magnitude spaces have in
common, it does not matter which continuous magnitude space is chosen to play
the role of the \textquotedblleft number system\textquotedblright\ $%
\mathbb{R}
_{+}$. We call $%
\mathbb{R}
_{+}$ the \textbf{positive real numbers}.
\end{definition}

When we defined the natural numbers, we immediately defined an integral
multiple $na$ where $n$ is a natural number and $a$ is an element of a
magnitude space. The remainder of this work examines similar constructions of
multiples where the multiplier is not necessarily a natural number. This will
lead us to real multiples of real numbers which is the familiar binary product
operator in $%
\mathbb{R}
_{+}$.

\section{Magnitude Embedding Spaces\label{sec embedding spaces}}

Definitions of multiples and products are based on embeddings of one magnitude
space into another. It is useful at the onset to consider the general case.

\begin{definition}
$H\left(  M,M^{\prime}\right)  $ is the set of all embeddings from $M$ into
$M^{\prime}$.
\end{definition}

It may happen that $H\left(  M,M^{\prime}\right)  $ is the empty set. For
example, if $M$ is nondiscrete and $M^{\prime}$ is discrete, there are no
embeddings of $M$ into $M^{\prime}$. If $H\left(  M,M^{\prime}\right)  $ is
not empty and $\varphi,\chi\in H\left(  M,M^{\prime}\right)  $, then we have
already shown (Theorem \ref{sum of homomorphisms}) that $\varphi+\chi\in
H\left(  M,M^{\prime}\right)  $.

\begin{theorem}
\label{H(M,N) is a magnitude}If $H\left(  M,M^{\prime}\right)  $ is nonempty,
then $H\left(  M,M^{\prime}\right)  $ is a magnitude space.
\end{theorem}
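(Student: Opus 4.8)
The plan is to verify the three axioms of Definition \ref{definition magnitude} for the operation on $H(M,M')$ given by $(\varphi+\chi)a=\varphi a+\chi a$. That this is a well-defined binary operation on $H(M,M')$ (i.e. that $\varphi+\chi$ is again an embedding) is precisely Theorem \ref{sum of homomorphisms}. Associativity and commutativity then come for free: for $\varphi,\chi,\psi\in H(M,M')$ and any $a\in M$, both $(\varphi+(\chi+\psi))a$ and $((\varphi+\chi)+\psi)a$ equal $\varphi a+\chi a+\psi a$ by associativity of $+$ in $M'$, and $(\varphi+\chi)a=\varphi a+\chi a=\chi a+\varphi a=(\chi+\varphi)a$ by commutativity in $M'$, so these are equalities of functions by Definition \ref{definition equal functions}.

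The real content is trichotomy. The key tool will be Theorem \ref{unique morphism}, which says that for embeddings $\varphi,\chi:M\rightarrow M'$ the relation ($<$, $=$, or $>$) that $\varphi a$ bears to $\chi a$ is the same for every $a\in M$. Fixing $\varphi,\chi\in H(M,M')$, I would split on this common relation. If it is equality, then $\varphi=\chi$ by Definition \ref{definition equal functions}. If it is $<$, so $\varphi a<\chi a$ for all $a$, I would define $\psi:M\rightarrow M'$ by $\psi a=\chi a-\varphi a$ (Definition \ref{definition subtraction}) and check that $\psi$ is a homomorphism: starting from $\chi(a+b)=\varphi(a+b)+\psi(a+b)$ and also $\chi(a+b)=\chi a+\chi b=(\varphi a+\psi a)+(\varphi b+\psi b)=(\varphi a+\varphi b)+(\psi a+\psi b)$ (commutativity and associativity in $M'$), the cancellation law Theorem \ref{equals added to equals or unequals} forces $\psi(a+b)=\psi a+\psi b$. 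Then $\psi$ is an embedding by Theorem \ref{morphisms 1-1}, so $\psi\in H(M,M')$, and since $\varphi a+(\chi a-\varphi a)=\chi a$ we get $\varphi+\psi=\chi$. The case $>$ is symmetric and gives $\varphi=\chi+\psi$ for a suitable $\psi\in H(M,M')$. So at least one of the three alternatives of Definition \ref{definition magnitude}(iii) holds.

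It remains to see that at most one holds, which I would do by evaluating at a single element of the nonempty set $M$: if $\chi=\varphi+\psi$ for some $\psi\in H(M,M')$ then $\chi a=\varphi a+\psi a$, hence $\chi a>\varphi a$ by Definition \ref{definition <}, which is incompatible with $\varphi=\chi$ and incompatible with $\varphi=\chi+\psi'$ (the latter would force $\varphi a>\chi a$), and the last pair of alternatives is excluded by the symmetric argument. I expect the trichotomy step to be the main obstacle — in particular, realizing that Theorem \ref{unique morphism} is exactly what makes $\varphi$ and $\chi$ uniformly comparable across all of $M$, so that one ``difference'' map serves at every point, and then confirming via the cancellation law that this pointwise difference is an embedding and not just an arbitrary function.
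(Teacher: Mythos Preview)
Your proposal is correct and follows essentially the same approach as the paper: associativity and commutativity are inherited pointwise from $M'$, and trichotomy is obtained by invoking Theorem \ref{unique morphism} to make $\varphi$ and $\chi$ uniformly comparable, then constructing the pointwise difference and checking it is a homomorphism. The only cosmetic differences are that the paper organizes the homomorphism check for $\delta$ via Definition \ref{definition subtraction} rather than cancellation, and leaves the ``at most one'' part of trichotomy implicit where you spell it out.
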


\begin{proof}
Let $\varphi,\chi,\psi\in H\left(  M,M^{\prime}\right)  $.

(i)For any $a\in M$%
\begin{align*}
\left(  \varphi+\left(  \chi+\psi\right)  \right)  a  & =\varphi a+\left(
\chi+\psi\right)  a\text{ (Definition \ref{definition sum of embeddings})}\\
& =\varphi a+\left(  \chi a+\psi a\right)  \text{ (Definition
\ref{definition sum of embeddings})}\\
& =\left(  \varphi a+\chi a\right)  +\psi a\text{ (Definition
\ref{definition magnitude})}\\
& =\left(  \varphi+\chi\right)  a+\psi a\text{ (Definition
\ref{definition sum of embeddings})}\\
& =\left(  \left(  \varphi+\chi\right)  +\psi\right)  a\text{ (Definition
\ref{definition sum of embeddings})}%
\end{align*}
and hence $\varphi+\left(  \chi+\psi\right)  =\left(  \varphi+\chi\right)
+\psi$ according to Definition \ref{definition equal functions}.

(ii)For any $a\in M$%
\begin{align*}
\left(  \varphi+\chi\right)  a  & =\varphi a+\chi a\text{ (Definition
\ref{definition sum of embeddings})}\\
& =\chi a+\varphi a\text{ (Definition \ref{definition magnitude})}\\
& =\left(  \chi+\varphi\right)  a\text{ (Definition
\ref{definition sum of embeddings})}%
\end{align*}
and hence $\varphi+\chi=\chi+\varphi$ according to Definition
\ref{definition equal functions}.

(iii)Fix $a\in M$. Since $\varphi a,\chi a\in M^{\prime}$, exactly one of the
following is true:%
\begin{align*}
\varphi a  & =\chi a\text{,}\\
\varphi a  & >\chi a\text{, or}\\
\chi a  & >\varphi a
\end{align*}
by Theorem \ref{< is trichotomous}. In the first case, Theorem
\ref{unique morphism} shows that $\varphi b=\chi b$ for all $b\in M$ and hence
$\varphi=\chi$ according to Definition \ref{definition equal functions}. In
the second case, $\varphi b>\chi b$ for all $b\in M$ by Theorem
\ref{unique morphism}. We can then define a function $d:M\rightarrow
M^{\prime}$ \ by%
\[
\delta b=\varphi b-\chi b\text{.}%
\]
For any $b,c\in M$%
\begin{align*}
\varphi\left(  b+c\right)   & =\varphi b+\varphi c\text{ (Definition
\ref{definition homomorphism})}\\
& =\left(  \left(  \varphi b-\chi b\right)  +\chi b\right)  +\left(  \left(
\varphi c-\chi c\right)  +\chi c\right)  \text{ (Definition
\ref{definition subtraction})}\\
& =\left(  \left(  \varphi b-\chi b\right)  +\left(  \varphi c-\chi c\right)
\right)  +\left(  \chi b+\chi c\right)  \text{ (associativity and
commutativity of }+\text{)}\\
& =\left(  \delta b+\delta c\right)  +\left(  \chi b+\chi c\right)  \text{
(definition of }\delta\text{)}\\
& =\left(  \delta b+\delta c\right)  +\chi\left(  b+c\right)  \text{
(Definition \ref{definition homomorphism})}%
\end{align*}
and hence $\varphi\left(  b+c\right)  -\chi\left(  b+c\right)  =\left(  \delta
b+\delta c\right)  $ by Definition \ref{definition subtraction}. But
$\delta\left(  b+c\right)  =\varphi\left(  b+c\right)  -\chi\left(
b+c\right)  $ by the definition of $\delta$. Hence $\delta\left(  b+c\right)
=\delta b+\delta c$ and $\delta$ is an element of $H\left(  M,M^{\prime
}\right)  $ by Definition \ref{definition homomorphism} and Theorem
\ref{morphisms 1-1}. And, of course,%
\begin{align*}
\varphi\left(  b\right)   & =\chi b+\left(  \varphi b-\chi b\right)  \text{
(Definition \ref{definition subtraction})}\\
& =\chi b+\delta b\text{ (definition of }\delta\text{)}\\
& =\left(  \chi+\delta\right)  b\text{ (Definition
\ref{definition sum of embeddings})}%
\end{align*}
for all $b\in M$ and hence $\varphi=\chi+\delta$ according to Definition
\ref{definition equal functions}. A similar argument applies in the third case
where $\chi a>\varphi a$. So, in summary, if $\varphi,\chi\in H\left(
M,M^{\prime}\right)  $, then exactly one of the following is true:%
\begin{align*}
\varphi & =\chi\text{,}\\
\varphi & =\chi+\delta\text{ for some }\delta\in H\left(  M,M^{\prime}\right)
\text{, or}\\
\chi & =\varphi+\delta\text{ for some }\delta\in H\left(  M,M^{\prime}\right)
\text{.}%
\end{align*}
We have now shown that $H\left(  M,M^{\prime}\right)  $, with the usual
addition of embeddings, is a magnitude space according to Definition
\ref{definition magnitude}.
\end{proof}

\section{Magnitude Endomorphism Spaces\label{sec endomorphism spaces}}

A special case of magnitude space embeddings are endomorphisms; that is
embeddings of a magnitude space into itself.

\begin{definition}
If $M$ is a magnitude space, then we denote the set of endomorphisms of $M$ by
$E\left(  M\right)  $.
\end{definition}

Of course $E\left(  M\right)  =H\left(  M,M\right)  $. Now in general
$H\left(  M,M^{\prime}\right)  $ can be empty. But $E\left(  M\right)  $
always has elements. In fact for each natural number $n$ the mapping
$\varphi:M\rightarrow M$ defined by $\varphi a=na$ is an embedding of $M$ into
$M$ and hence is an element of $E\left(  M\right)  $.

If $\varphi,\chi\in E\left(  M\right)  $ then $\varphi+\chi$ and $\varphi
\circ\chi$ are each elements of $E\left(  M\right)  $ by Theorems
\ref{sum of homomorphisms} and \ref{composition of homomophisms}. We have
already shown that $E\left(  M\right)  =H\left(  M,M\right)  $ is a magnitude
space with the $+$ operator on elements of $E\left(  M\right)  $ (Theorem
\ref{H(M,N) is a magnitude}). Also we know that the composition operator
$\circ$ is commutative (Theorem \ref{endomorphisms commute}) and associative
(Theorem \ref{composition is associative}).

\begin{theorem}
\label{composition distributes over addition}If $\varphi,\chi,\psi\in E\left(
M\right)  $ then $\varphi\circ\left(  \chi+\psi\right)  =\varphi\circ
\chi+\varphi\circ\psi$ and $\left(  \varphi+\chi\right)  \circ\psi
=\varphi\circ\psi+\chi\circ\psi$.
\end{theorem}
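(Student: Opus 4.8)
The plan is to prove each of the two identities by the standard device used throughout this paper for showing that two functions are equal (Definition \ref{definition equal functions}): fix an arbitrary $a\in M$, evaluate each side at $a$ by unwinding Definition \ref{definition composition} (composition) and Definition \ref{definition sum of embeddings} (sum of embeddings), observe that the two values coincide, and conclude equality of the functions. All objects in sight are endomorphisms, so $\chi+\psi$, $\varphi\circ\chi$, $\varphi\circ\psi$, $\varphi+\chi$, and $\chi\circ\psi$ all lie in $E(M)$ by Theorems \ref{sum of homomorphisms} and \ref{composition of homomophisms}, and the expressions make sense.

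For the left-distributive law $\varphi\circ(\chi+\psi)=\varphi\circ\chi+\varphi\circ\psi$, I would compute, for $a\in M$, that $(\varphi\circ(\chi+\psi))a=\varphi((\chi+\psi)a)=\varphi(\chi a+\psi a)$, then use the fact that $\varphi$ is an endomorphism and hence a homomorphism (Definition \ref{definition homomorphism}) to rewrite this as $\varphi(\chi a)+\varphi(\psi a)$, and finally recognize this as $(\varphi\circ\chi)a+(\varphi\circ\psi)a=(\varphi\circ\chi+\varphi\circ\psi)a$. This is the single step where the homomorphism property of one of the endomorphisms is actually used.

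For the right-distributive law $(\varphi+\chi)\circ\psi=\varphi\circ\psi+\chi\circ\psi$, the computation is even shorter: for $a\in M$, $((\varphi+\chi)\circ\psi)a=(\varphi+\chi)(\psi a)=\varphi(\psi a)+\chi(\psi a)=(\varphi\circ\psi)a+(\chi\circ\psi)a=(\varphi\circ\psi+\chi\circ\psi)a$, invoking only Definitions \ref{definition composition} and \ref{definition sum of embeddings}; notably the homomorphism properties of $\varphi$ and $\chi$ play no role in this direction.

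There is no real obstacle here — both identities are pure definition-chasing, each a chain of three or four equalities. The only minor care needed is to cite the correct definition at each step and to notice the asymmetry between the two halves, namely that the left-distributive law needs $\varphi$ to be a homomorphism while the right-distributive law does not.
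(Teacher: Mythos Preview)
Your proposal is correct and follows exactly the same route as the paper: evaluate each side at an arbitrary $a\in M$, unwind Definitions \ref{definition composition}, \ref{definition sum of embeddings}, and (for the left law) \ref{definition homomorphism}, then conclude by Definition \ref{definition equal functions}. The paper only writes out the left-distributive chain and dismisses the second identity with ``proved in a similar manner,'' whereas you spell out both and note the asymmetry that the right-distributive law needs no homomorphism property---a nice extra observation.
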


\begin{proof}
If $a\in M$ then%
\begin{align*}
\left(  \varphi\circ\left(  \chi+\psi\right)  \right)  a  & =\varphi\left(
\left(  \chi+\psi\right)  a\right)  \text{ (Definition
\ref{definition composition})}\\
& =\varphi\left(  \chi a+\psi a\right)  \text{ (Definition
\ref{definition sum of embeddings})}\\
& =\varphi\left(  \chi a\right)  +\varphi\left(  \psi a\right)  \text{
(Definition \ref{definition homomorphism})}\\
& =\left(  \varphi\circ\chi\right)  a+\left(  \varphi\circ\psi\right)  a\text{
(Definition \ref{definition composition})}\\
& =\left(  \varphi\circ\chi+\varphi\circ\psi\right)  a\text{ (Definition
\ref{definition sum of embeddings})}%
\end{align*}
and hence $\varphi\circ\left(  \chi+\psi\right)  =\varphi\circ\chi
+\varphi\circ\psi$ by Definition \ref{definition equal functions}. The second
part of the theorem can be proved in a similar manner.
\end{proof}

\begin{theorem}
\label{composition preserves order}If $\varphi,\chi,\psi\in E\left(  M\right)
$ then $\varphi\circ\chi$ has to $\varphi\circ\psi$ the same relation (%
$<$%
, =, or
$>$%
) as $\chi$ has to $\psi$ and $\chi\circ\varphi$ has to $\psi\circ\varphi$ the
same relation (%
$<$%
, =, or
$>$%
) as $\chi$ has to $\psi$.
\end{theorem}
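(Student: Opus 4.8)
The plan is to reduce both halves of the statement to Theorem~\ref{order preserving functions} by viewing $E\left(M\right)$ as the magnitude space it was shown to be in Theorem~\ref{H(M,N) is a magnitude}, and exploiting the two distributive laws of Theorem~\ref{composition distributes over addition}. The point is that ``$<$'' on $E\left(M\right)$ is already available and is trichotomous (Theorem~\ref{< is trichotomous} applied to the magnitude space $E\left(M\right)$), so it suffices to check that precomposition and postcomposition by a fixed endomorphism are order-preserving maps of $E\left(M\right)$ into itself.

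First I would fix $\varphi\in E\left(M\right)$ and define $F:E\left(M\right)\rightarrow E\left(M\right)$ by $F\left(\chi\right)=\varphi\circ\chi$. To see $F$ is order-preserving, suppose $\chi<\psi$ in $E\left(M\right)$; then by Definition~\ref{definition <} there is some $\delta\in E\left(M\right)$ with $\psi=\chi+\delta$, and by Theorem~\ref{composition distributes over addition} we get $F\left(\psi\right)=\varphi\circ\left(\chi+\delta\right)=\varphi\circ\chi+\varphi\circ\delta=F\left(\chi\right)+\left(\varphi\circ\delta\right)$. Since $\varphi\circ\delta$ is again an endomorphism by Theorem~\ref{composition of homomophisms}, Definition~\ref{definition <} gives $F\left(\chi\right)<F\left(\psi\right)$. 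Having shown $\chi<\psi$ implies $F\left(\chi\right)<F\left(\psi\right)$, Theorem~\ref{order preserving functions} (with $S=S'=E\left(M\right)$ and the trichotomous relation $<$) yields that $\varphi\circ\chi$ has to $\varphi\circ\psi$ the same relation ($<$, $=$, or $>$) as $\chi$ has to $\psi$. Then I would repeat the argument verbatim with $G\left(\chi\right)=\chi\circ\varphi$, using instead the right-hand distributive law $\left(\chi+\delta\right)\circ\varphi=\chi\circ\varphi+\delta\circ\varphi$ from Theorem~\ref{composition distributes over addition} and the fact that $\delta\circ\varphi\in E\left(M\right)$, to obtain the second assertion.

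I do not expect any real obstacle: the only facts that need to be in place are that $E\left(M\right)$ is a magnitude space (so ``$<$'' on it is meaningful and trichotomous) and that the ``difference'' endomorphism $\delta$ produced by $\chi<\psi$ survives composition with $\varphi$ on either side, both of which are established earlier in the excerpt. The proof is therefore essentially a bookkeeping exercise combining Theorems~\ref{composition distributes over addition}, \ref{composition of homomophisms}, and~\ref{order preserving functions}.
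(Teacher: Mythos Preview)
Your proposal is correct and follows essentially the same route as the paper. The only cosmetic difference is that the paper packages your order-preservation argument as ``$\Psi\chi=\varphi\circ\chi$ is a homomorphism of the magnitude space $E(M)$ by Theorem~\ref{composition distributes over addition}, hence order-preserving by Theorem~\ref{morphisms 1-1}'', whereas you unfold this step by hand (producing the witness $\varphi\circ\delta$) and cite Theorem~\ref{order preserving functions} directly; the content is identical.
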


\begin{proof}
Fix $\varphi\in E\left(  M\right)  $ and let the mapping $\Psi:E\left(
M\right)  \rightarrow E\left(  M\right)  $ be defined by $\Psi\chi
=\varphi\circ\chi$. If $\chi,\psi\in E\left(  M\right)  $, then
\begin{align*}
\Psi\left(  \chi+\psi\right)   & =\varphi\circ\left(  \chi+\psi\right)  \text{
(definition of }\Psi\text{)}\\
& =\varphi\circ\chi+\varphi\circ\psi\text{ (Theorem
\ref{composition distributes over addition})}\\
& =\Psi\chi+\Psi\psi\text{ (definition of }\Psi\text{)}%
\end{align*}
and hence $\Psi$ is a homomorphism by Definition \ref{definition homomorphism}%
. Therefore $\varphi\circ\chi=\Psi\chi$ has to $\varphi\circ\psi=\Psi\psi$ the
same relation (%
$<$%
, =, or
$>$%
) as $\chi$ has to $\psi$ (and $\Psi$ is an embedding) by Theorem
\ref{morphisms 1-1}. The second part of the theorem can be proved in a similar manner.
\end{proof}

\begin{theorem}
\label{composition identity element}If $\varphi\in E\left(  M\right)  $, then
$\varphi\circ i_{M}=i_{M}\circ\varphi=\varphi$.
\end{theorem}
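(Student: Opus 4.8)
The plan is to prove both equalities pointwise and then invoke Definition \ref{definition equal functions}. Since $\varphi\in E\left(M\right)$ is in particular a function from $M$ to $M$, and $i_{M}:M\rightarrow M$ is a function, the compositions $\varphi\circ i_{M}$ and $i_{M}\circ\varphi$ are both functions from $M$ into $M$, so it suffices to check that they agree with $\varphi$ on every element of $M$.

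First I would fix an arbitrary $a\in M$ and compute $\left(\varphi\circ i_{M}\right)a=\varphi\left(i_{M}a\right)$ by Definition \ref{definition composition}, and then $\varphi\left(i_{M}a\right)=\varphi a$ since $i_{M}a=a$ by Definition \ref{definition identity function}. This shows $\left(\varphi\circ i_{M}\right)a=\varphi a$ for all $a\in M$, hence $\varphi\circ i_{M}=\varphi$ by Definition \ref{definition equal functions}.

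Next I would do the symmetric computation: for arbitrary $a\in M$, $\left(i_{M}\circ\varphi\right)a=i_{M}\left(\varphi a\right)$ by Definition \ref{definition composition}, and since $\varphi a\in M$, Definition \ref{definition identity function} gives $i_{M}\left(\varphi a\right)=\varphi a$. Thus $\left(i_{M}\circ\varphi\right)a=\varphi a$ for all $a\in M$, hence $i_{M}\circ\varphi=\varphi$ by Definition \ref{definition equal functions}. Combining the two gives $\varphi\circ i_{M}=i_{M}\circ\varphi=\varphi$.

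There is no real obstacle here: the statement is a routine unwinding of the definitions of composition, the identity function, and function equality, of exactly the same flavor as Theorems \ref{composition is associative} and \ref{identity homomorphism}. The only thing to be careful about is that one is working in $E\left(M\right)$, so both factors are endomorphisms of the single space $M$ and the types match; no appeal to the magnitude-space axioms or to embedding properties is needed.
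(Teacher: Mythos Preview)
Your proof is correct. The paper proves $\varphi\circ i_{M}=\varphi$ by the same pointwise computation you give, but then obtains $i_{M}\circ\varphi=\varphi$ by invoking Theorem \ref{endomorphisms commute} (commutativity of endomorphisms) rather than by a second direct computation. Your route is more elementary: it uses only the definitions of composition, the identity function, and function equality, and in particular does not depend on the nontrivial fact that endomorphisms of an Archimedean magnitude space commute. The paper's route is shorter by one line but imports a much heavier result than the statement actually requires.
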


\begin{proof}
If $a\in M$, then%
\begin{align*}
\left(  \varphi\circ i_{M}\right)  a  & =\varphi\left(  i_{M}a\right)  \text{
(Definition \ref{definition composition})}\\
& =\varphi a\text{ (Definition \ref{definition identity function})}%
\end{align*}
and therefore $\varphi\circ i_{M}=\varphi$ by Definition
\ref{definition equal functions}. And $\varphi\circ i_{M}=i_{M}\circ\varphi$
since endomorphisms commute by Theorem \ref{endomorphisms commute}.
\end{proof}

At this point we would like to point out that the magnitude space $E\left(
M\right)  $ for an arbitrary (Archimedean) magnitude space has exactly those
properties that we associate with addition and multiplication of positive numbers.

\begin{enumerate}
\item $E\left(  M\right)  $ with the addition operator is a magnitude space
(Theorem \ref{H(M,N) is a magnitude}).

\begin{enumerate}
\item $\varphi+\left(  \chi+\psi\right)  =\left(  \varphi+\chi\right)  +\psi$

\item $\varphi+\chi=\chi+\varphi$

\item Exactly one of the following is true: $\varphi=\chi$, or $\varphi
=\chi+\delta$ for some $\delta\in E\left(  M\right)  $, or $\chi
=\varphi+\delta$ for some $\delta\in E\left(  M\right)  $.
\end{enumerate}

\item The composition operator is associative and commutative (Theorems
\ref{composition is associative} and \ref{endomorphisms commute}).

\begin{enumerate}
\item $\psi\circ\left(  \chi\circ\varphi\right)  =\left(  \psi\circ
\chi\right)  \circ\varphi$

\item $\varphi\circ\chi=\chi\circ\varphi$
\end{enumerate}

\item Composition distributes over addition (Theorem
\ref{composition distributes over addition}).

\begin{enumerate}
\item $\varphi\circ\left(  \chi+\psi\right)  =\varphi\circ\chi+\varphi
\circ\psi$

\item $\left(  \varphi+\chi\right)  \circ\psi=\varphi\circ\psi+\chi\circ\psi$
\end{enumerate}

\item There is an identity element for the composition operator (Theorem
\ref{composition identity element}).

\begin{enumerate}
\item $\varphi\circ i_{M}=\varphi$

\item $i_{M}\circ\varphi=\varphi$
\end{enumerate}

\item Composition on left or right preserves order relations in the magnitude
space $E\left(  M\right)  $ (Theorem \ref{composition preserves order})

\begin{enumerate}
\item $\varphi\circ\chi$ has to $\varphi\circ\psi$ the same relation (%
$<$%
, =, or
$>$%
) as $\chi$ has to $\psi$

\item $\chi\circ\varphi$ has to $\psi\circ\varphi$ the same relation (%
$<$%
, =, or
$>$%
) as $\chi$ has to $\psi$
\end{enumerate}
\end{enumerate}

\section{Generalized Multiple\label{sec Generalized Multiple}}

In Section \ref{sec embedding spaces} we showed $H\left(  M,M^{\prime}\right)
$ is itself a magnitude space. The question naturally arises of how $H\left(
M,M^{\prime}\right)  $ might be related to the magnitude spaces $M$ and
$M^{\prime}$. In this section we consider the special case in which $M$ is a
magnitude space with some distinguished element $1$, and $M^{\prime}$ is any
magnitude space such that for each element $a^{\prime}\in M^{\prime} $ there
is an embedding of $M$ into $M^{\prime}$ which maps $1$ into $a^{\prime}$. We
already have two examples of this special case:

\begin{enumerate}
\item $M$ is a well ordered magnitude space and $1$ is the smallest element in
$M$; and $M^{\prime}$ is an arbitrary (Archimedean) magnitude space (Theorem
\ref{embedding discrete magnitude}).

\item $M$ is an arbitrary (Archimedean) magnitude space and $1$ is any element
in $M$; and $M^{\prime}$ is a continuous magnitude space (Theorem
\ref{complete f(a)=b}).
\end{enumerate}

In this section variables $a,b$ are elements of $M$ and $a^{\prime},b^{\prime
}$ are elements of $M^{\prime}$.

\begin{definition}
\label{definition generalized product}Let the mapping $\Psi:M^{\prime
}\rightarrow H\left(  M,M^{\prime}\right)  $ be defined such that for each
$a^{\prime}\in M^{\prime}$, $\Psi a^{\prime}$ is the unique element of
$H\left(  M,M^{\prime}\right)  $ which maps $1$ into $a^{\prime}$. For any
$a\in M$, $a^{\prime}\in M^{\prime}$ we define $aa^{\prime}$ to be $\left(
\Psi a^{\prime}\right)  a$. Note that $aa^{\prime}\in M^{\prime}$.
\end{definition}

\begin{theorem}
The map $\Psi:M^{\prime}\rightarrow H\left(  M,M^{\prime}\right)  $ is an isomorphism.
\end{theorem}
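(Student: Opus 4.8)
The plan is to verify directly the three clauses of Definition \ref{definition isomorphism isomorphic}: that $\Psi$ is a homomorphism, that it is one-to-one, and that it is onto. Since $H\left(M,M^{\prime}\right)$ is nonempty by the standing hypothesis of this section, it is a magnitude space by Theorem \ref{H(M,N) is a magnitude}, so $\Psi$ is a map between magnitude spaces and the notion of homomorphism applies to it. Once $\Psi$ is shown to be a homomorphism, Theorem \ref{morphisms 1-1} will supply the one-to-one property automatically, so the real content is (a) the homomorphism identity $\Psi\left(a^{\prime}+b^{\prime}\right)=\Psi a^{\prime}+\Psi b^{\prime}$ and (b) surjectivity.

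For the homomorphism step, fix $a^{\prime},b^{\prime}\in M^{\prime}$. By Definition \ref{definition generalized product}, $\Psi a^{\prime}$ and $\Psi b^{\prime}$ are the embeddings of $M$ into $M^{\prime}$ sending $1$ to $a^{\prime}$ and to $b^{\prime}$ respectively, and $\Psi\left(a^{\prime}+b^{\prime}\right)$ is the embedding sending $1$ to $a^{\prime}+b^{\prime}$. Now $\Psi a^{\prime}+\Psi b^{\prime}$ is again an embedding of $M$ into $M^{\prime}$ by Theorem \ref{sum of homomorphisms}, and by Definition \ref{definition sum of embeddings} it sends $1$ to $\left(\Psi a^{\prime}\right)1+\left(\Psi b^{\prime}\right)1=a^{\prime}+b^{\prime}$. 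Two embeddings of $M$ into $M^{\prime}$ agreeing on the single element $1$ coincide by Theorem \ref{unique morphism}, so $\Psi\left(a^{\prime}+b^{\prime}\right)=\Psi a^{\prime}+\Psi b^{\prime}$; hence $\Psi$ is a homomorphism, and an embedding, by Definition \ref{definition homomorphism} and Theorem \ref{morphisms 1-1}.

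For surjectivity, take an arbitrary $\varphi\in H\left(M,M^{\prime}\right)$ and put $a^{\prime}=\varphi 1\in M^{\prime}$. Then $\varphi$ is an embedding of $M$ into $M^{\prime}$ mapping $1$ to $a^{\prime}$, and so is $\Psi a^{\prime}$ by Definition \ref{definition generalized product}; by Theorem \ref{unique morphism} these two embeddings agree, i.e.\ $\varphi=\Psi a^{\prime}$. Thus every element of $H\left(M,M^{\prime}\right)$ lies in the image of $\Psi$, so $\Psi$ is onto. Combining this with the previous paragraph, $\Psi$ is an embedding which is onto as a mapping, hence an isomorphism according to Definition \ref{definition isomorphism isomorphic}.

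I do not anticipate a genuine obstacle: the argument amounts to unwinding Definition \ref{definition generalized product} and applying the uniqueness statement of Theorem \ref{unique morphism} twice, once to identify a sum of two $\Psi$-values with a single $\Psi$-value and once to recognize an arbitrary embedding as a $\Psi$-value. The only point requiring mild care is keeping the distinguished element $1\in M$ fixed throughout so that the uniqueness theorem can be invoked, and computing the evaluation $\left(\Psi a^{\prime}+\Psi b^{\prime}\right)1$ via Definition \ref{definition sum of embeddings} rather than taking it for granted.
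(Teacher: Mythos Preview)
Your proposal is correct and follows essentially the same approach as the paper: both first verify $\Psi(a'+b')=\Psi a'+\Psi b'$ by observing that $\Psi a'+\Psi b'$ is an embedding sending $1$ to $a'+b'$ and invoking Theorem \ref{unique morphism}, then show surjectivity by noting that any $\varphi\in H(M,M')$ agrees with $\Psi(\varphi 1)$ at $1$ and applying Theorem \ref{unique morphism} again.
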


\begin{proof}
Note that $H\left(  M,M^{\prime}\right)  $ is a magnitude space by Theorem
\ref{H(M,N) is a magnitude}. Let $a^{\prime}$ and $b^{\prime}$ be any two
elements of $M^{\prime}$. By assumption there exist elements $\Psi a^{\prime
},\Psi b^{\prime},\Psi\left(  a^{\prime}+b^{\prime}\right)  \in H\left(
M,M^{\prime}\right)  $ which map $1$ into $a^{\prime},b^{\prime},a^{\prime
}+b^{\prime}$ respectively. Now $\Psi a^{\prime}+\Psi b^{\prime}\in H\left(
M,M^{\prime}\right)  $ by Theorem \ref{sum of homomorphisms} and%
\begin{align*}
\left(  \Psi a^{\prime}+\Psi b^{\prime}\right)  1  & =\left(  \Psi a^{\prime
}\right)  1+\left(  \Psi b^{\prime}\right)  1\text{ (Definition
\ref{sum of homomorphisms})}\\
& =a^{\prime}+b^{\prime}\text{. (definition of }\Psi a^{\prime}\text{ and
}\Psi b^{\prime}\text{)}%
\end{align*}
Thus $\Psi\left(  a^{\prime}+b^{\prime}\right)  $ and $\Psi a^{\prime}+\Psi
b^{\prime}$ are each embeddings which map $1$ into $a^{\prime}+b^{\prime}$ and
hence $\Psi\left(  a^{\prime}+b^{\prime}\right)  =\Psi a^{\prime}+\Psi
b^{\prime}$ by Theorem \ref{unique morphism}. Therefore $\Psi$ is an embedding
by Definition \ref{definition homomorphism} and Theorem \ref{morphisms 1-1}.

Now if $\chi\in H\left(  M,M^{\prime}\right)  $, then $\chi$ maps $1$ into
$\chi1$. And also $\Psi\left(  \chi1\right)  \in H\left(  M,M^{\prime}\right)
$ maps $1$ into $\chi1$. Therefore $\chi=\Psi\left(  \chi1\right)  $ by
Theorem \ref{unique morphism}. Hence $\Psi$ is onto and therefore $\Psi$ is an
isomorphism from $M^{\prime}$ onto $H\left(  M,M^{\prime}\right)  $ according
to Definition \ref{definition isomorphism isomorphic}.
\end{proof}

\begin{theorem}
\label{multiple identity element}$1a^{\prime}=a^{\prime}$
\end{theorem}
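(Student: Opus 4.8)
The plan is to simply unwind Definition \ref{definition generalized product}. According to that definition, for any $a\in M$ and $a^{\prime}\in M^{\prime}$ the product $aa^{\prime}$ is defined to be $\left(\Psi a^{\prime}\right)a$, where $\Psi a^{\prime}$ is the unique element of $H\left(M,M^{\prime}\right)$ that maps the distinguished element $1\in M$ into $a^{\prime}$. Specializing $a$ to the distinguished element $1$, this gives $1a^{\prime}=\left(\Psi a^{\prime}\right)1$.

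The only remaining step is to observe that $\left(\Psi a^{\prime}\right)1=a^{\prime}$, which is immediate from the defining property of $\Psi a^{\prime}$ recorded in Definition \ref{definition generalized product}: it is precisely the embedding sending $1$ to $a^{\prime}$. Chaining these two equalities yields $1a^{\prime}=a^{\prime}$, and that is all that needs to be written.

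There is no genuine obstacle here; the statement is a direct consequence of the normalization built into the generalized product, namely that every embedding $\Psi a^{\prime}$ fixes the value $a^{\prime}$ at the point $1$. It is the exact analogue of the earlier observation for integral multiples that $1a=\varphi_{1,a}1=a$ (the remark following Theorem \ref{1a and (n+1)a}). So the proof reduces to a two-line citation of Definition \ref{definition generalized product}, with no case analysis or computation required.
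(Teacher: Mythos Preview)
Your proof is correct and is essentially identical to the paper's own one-line argument, which reads $1a^{\prime}=\left(\Psi a^{\prime}\right)1=a^{\prime}$ by Definition \ref{definition generalized product}. You have simply unpacked the two equalities and added the analogy with integral multiples; there is no substantive difference in approach.
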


\begin{proof}
$1a^{\prime}=\left(  \Psi a^{\prime}\right)  1=a^{\prime}$ by Definition
\ref{definition generalized product}.
\end{proof}

The following two theorems were proved separately for integral multiples in
Propositions \ref{V 1} and \ref{V 2}.

\begin{theorem}
\label{multiple V 1}$a\left(  a^{\prime}+b^{\prime}\right)  =aa^{\prime
}+ab^{\prime}$
\end{theorem}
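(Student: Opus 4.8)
The plan is to unwind Definition \ref{definition generalized product} and then invoke the additivity of the map $\Psi$ that was just established. Recall that by definition $aa^{\prime}=\left(\Psi a^{\prime}\right)a$, $ab^{\prime}=\left(\Psi b^{\prime}\right)a$, and $a\left(a^{\prime}+b^{\prime}\right)=\left(\Psi\left(a^{\prime}+b^{\prime}\right)\right)a$, where for each element $c^{\prime}\in M^{\prime}$, $\Psi c^{\prime}$ is the unique embedding of $M$ into $M^{\prime}$ carrying $1$ to $c^{\prime}$. So everything reduces to understanding $\Psi\left(a^{\prime}+b^{\prime}\right)$.

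The key step is that $\Psi$ is a homomorphism: in the proof that $\Psi:M^{\prime}\rightarrow H\left(M,M^{\prime}\right)$ is an isomorphism it was shown that $\Psi\left(a^{\prime}+b^{\prime}\right)=\Psi a^{\prime}+\Psi b^{\prime}$ (both sides being the embedding sending $1$ to $a^{\prime}+b^{\prime}$, and such an embedding is unique by Theorem \ref{unique morphism}). Granting this, I would then apply the definition of the sum of two embeddings (Definition \ref{definition sum of embeddings}) to evaluate at $a$:
\[
a\left(a^{\prime}+b^{\prime}\right)=\left(\Psi\left(a^{\prime}+b^{\prime}\right)\right)a=\left(\Psi a^{\prime}+\Psi b^{\prime}\right)a=\left(\Psi a^{\prime}\right)a+\left(\Psi b^{\prime}\right)a=aa^{\prime}+ab^{\prime}.
\]

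There is no real obstacle here; the work was done in the isomorphism theorem, and what remains is a short chain of rewrites using Definitions \ref{definition generalized product} and \ref{definition sum of embeddings}. If one preferred to avoid citing the isomorphism theorem, the alternative would be to verify directly that the map $b\mapsto\left(\Psi a^{\prime}\right)b+\left(\Psi b^{\prime}\right)b$ is an embedding of $M$ into $M^{\prime}$ carrying $1$ to $a^{\prime}+b^{\prime}$ (using Theorem \ref{sum of homomorphisms} and Theorem \ref{multiple identity element}), and hence equals $\Psi\left(a^{\prime}+b^{\prime}\right)$ by uniqueness; but invoking the already-proved additivity of $\Psi$ is cleaner, so that is the route I would take.
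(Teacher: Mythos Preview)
Your proof is correct and follows essentially the same route as the paper: unfold Definition~\ref{definition generalized product}, use the additivity of $\Psi$ established in the isomorphism theorem, apply Definition~\ref{definition sum of embeddings}, and fold back via Definition~\ref{definition generalized product}. The paper's proof is exactly this four-line chain of equalities.
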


\begin{proof}%
\begin{align*}
a\left(  a^{\prime}+b^{\prime}\right)   & =\left(  \Psi\left(  a^{\prime
}+b^{\prime}\right)  \right)  a\text{ (Definition
\ref{definition generalized product})}\\
& =\left(  \Psi a^{\prime}+\Psi b^{\prime}\right)  a\text{ (Definition
\ref{definition homomorphism})}\\
& =\left(  \Psi a^{\prime}\right)  a+\left(  \Psi b^{\prime}\right)  a\text{
(Definition \ref{definition sum of embeddings})}\\
& =aa^{\prime}+ab^{\prime}\text{ (Definition
\ref{definition generalized product})}%
\end{align*}

\end{proof}

\begin{theorem}
\label{multiple V 2}$\left(  a+b\right)  a^{\prime}=aa^{\prime}+ba^{\prime}$
\end{theorem}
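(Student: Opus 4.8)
The plan is to reduce everything to the single fact that $\Psi a^{\prime}$, being by definition an element of $H\!\left(M,M^{\prime}\right)$, is a homomorphism. Unlike the previous theorem, where we distributed $\Psi$ over a sum in $M^{\prime}$, here the sum $a+b$ lives in $M$, which is the \emph{domain} of the embedding $\Psi a^{\prime}$, so the additivity we need is exactly the defining property of a homomorphism rather than anything about $\Psi$ itself.

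Concretely, I would start from the left-hand side and apply Definition \ref{definition generalized product} to rewrite $\left(a+b\right)a^{\prime}$ as $\left(\Psi a^{\prime}\right)\left(a+b\right)$. Then, since $\Psi a^{\prime}\in H\!\left(M,M^{\prime}\right)$ is an embedding and hence a homomorphism, Definition \ref{definition homomorphism} gives $\left(\Psi a^{\prime}\right)\left(a+b\right)=\left(\Psi a^{\prime}\right)a+\left(\Psi a^{\prime}\right)b$. Finally, applying Definition \ref{definition generalized product} twice more (once to each summand) converts $\left(\Psi a^{\prime}\right)a$ back to $aa^{\prime}$ and $\left(\Psi a^{\prime}\right)b$ back to $ba^{\prime}$, yielding $\left(a+b\right)a^{\prime}=aa^{\prime}+ba^{\prime}$. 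The whole argument is a four-line \texttt{align*} display of the same flavor as the proof of Theorem \ref{multiple V 1}.

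There is essentially no obstacle here; the only thing to be careful about is not to invoke the isomorphism $\Psi$ being additive (that was Theorem \ref{multiple V 1}), but rather the separate and more elementary fact that each individual image $\Psi a^{\prime}$ respects the $+$ operation on its domain $M$. Everything needed — that $\Psi a^{\prime}$ lands in $H\!\left(M,M^{\prime}\right)$ and that members of $H\!\left(M,M^{\prime}\right)$ are homomorphisms — is already built into Definition \ref{definition generalized product} and Definition \ref{definition homomorphism}, so no new lemma is required.

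\begin{proof}
\begin{align*}
\left(  a+b\right)  a^{\prime}  & =\left(  \Psi a^{\prime}\right)  \left(
a+b\right)  \text{ (Definition \ref{definition generalized product})}\\
& =\left(  \Psi a^{\prime}\right)  a+\left(  \Psi a^{\prime}\right)  b\text{
(Definition \ref{definition homomorphism})}\\
& =aa^{\prime}+ba^{\prime}\text{. (Definition
\ref{definition generalized product})}
\end{align*}
\end{proof}
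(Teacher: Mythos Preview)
Your proof is correct and is essentially identical to the paper's own proof: the same three-line display applying Definition \ref{definition generalized product}, then Definition \ref{definition homomorphism}, then Definition \ref{definition generalized product} again. Your explanatory preamble correctly identifies why this theorem uses the homomorphism property of $\Psi a^{\prime}$ rather than the additivity of $\Psi$ itself.
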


\begin{proof}%
\begin{align*}
\left(  a+b\right)  a^{\prime}  & =\left(  \Psi a^{\prime}\right)  \left(
a+b\right)  \text{ (Definition \ref{definition generalized product})}\\
& =\left(  \Psi a^{\prime}\right)  a+\left(  \Psi a^{\prime}\right)  b\text{
(Definition \ref{definition homomorphism})}\\
& =aa^{\prime}+ba^{\prime}\text{ (Definition
\ref{definition generalized product})}%
\end{align*}

\end{proof}

The following two theorems were proved separately for integral multiples in
Propositions \ref{V 5} and \ref{V 6}.

\begin{theorem}
\label{multiple V 5}If $a^{\prime}>b^{\prime}$, then $aa^{\prime}>ab^{\prime}$
and $aa^{\prime}-ab^{\prime}=a\left(  a^{\prime}-b^{\prime}\right)  $. And,
more generally, $aa^{\prime}$ has to $ab^{\prime}$ the same relation (%
$<$%
, =, or
$>$%
) as $a^{\prime}$ has to $b^{\prime}$.
\end{theorem}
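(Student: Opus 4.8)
The plan is to mimic the proof of Proposition \ref{V 5} word for word, now with the generalized multiple in place of the integral multiple. First I would fix $a\in M$ and introduce the function $\varphi:M^{\prime}\rightarrow M^{\prime}$ defined by $\varphi b^{\prime}=ab^{\prime}$. The content of the theorem just proved, Theorem \ref{multiple V 1}, is precisely that this $\varphi$ is additive: $\varphi\left(a^{\prime}+b^{\prime}\right)=a\left(a^{\prime}+b^{\prime}\right)=aa^{\prime}+ab^{\prime}=\varphi a^{\prime}+\varphi b^{\prime}$. Hence $\varphi$ is a homomorphism of the magnitude space $M^{\prime}$ into itself according to Definition \ref{definition homomorphism}.

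With $\varphi$ established as a homomorphism, Theorems \ref{morphisms monotonic} and \ref{morphisms 1-1} apply directly. From Theorem \ref{morphisms monotonic}, $b^{\prime}<a^{\prime}$ implies $\varphi b^{\prime}<\varphi a^{\prime}$ and $\varphi a^{\prime}-\varphi b^{\prime}=\varphi\left(a^{\prime}-b^{\prime}\right)$; unwinding the definition of $\varphi$, this is exactly the statement that $a^{\prime}>b^{\prime}$ implies $aa^{\prime}>ab^{\prime}$ and $aa^{\prime}-ab^{\prime}=a\left(a^{\prime}-b^{\prime}\right)$. From Theorem \ref{morphisms 1-1}, $\varphi a^{\prime}$ has to $\varphi b^{\prime}$ the same relation ($<$, $=$, or $>$) as $a^{\prime}$ has to $b^{\prime}$, which is the general claim that $aa^{\prime}$ has to $ab^{\prime}$ the same relation as $a^{\prime}$ has to $b^{\prime}$.

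There is no genuine obstacle here; the real work was done in showing $\Psi$ is an isomorphism and in Theorem \ref{multiple V 1}. The one point worth a moment's care is that Theorems \ref{morphisms monotonic} and \ref{morphisms 1-1} are stated for homomorphisms between magnitude spaces, so it matters that the domain and codomain of $\varphi$ are both the magnitude space $M^{\prime}$; once that is observed, the proof is a two-line invocation exactly parallel to Proposition \ref{V 5} (and the companion theorem for $\left(a+b\right)a^{\prime}$ will similarly parallel Proposition \ref{V 6}, using Theorem \ref{multiple V 2} and fixing $a^{\prime}$ instead). If one preferred not to appeal to Theorem \ref{multiple V 1}, the same conclusion could be reached by writing $\Psi a^{\prime}=\Psi b^{\prime}+\Psi\left(a^{\prime}-b^{\prime}\right)$ (Theorem \ref{morphisms monotonic} applied to the embedding $\Psi$) and evaluating at $a$, but routing through Theorem \ref{multiple V 1} keeps the argument shortest.
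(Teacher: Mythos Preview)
Your proposal is correct and matches the paper's proof essentially line for line: fix $a$, define $\varphi:M'\to M'$ by $\varphi b'=ab'$, invoke Theorem \ref{multiple V 1} to see that $\varphi$ is a homomorphism, and then apply Theorems \ref{morphisms monotonic} and \ref{morphisms 1-1}. The paper's version is just the terse two-line form of what you wrote out.
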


\begin{proof}
Fix $a$ and let $\varphi:M^{\prime}\rightarrow M^{\prime}$ be the function
defined by $\varphi a^{\prime}=aa^{\prime}$. Then $\varphi\left(  a^{\prime
}+b^{\prime}\right)  =\varphi a^{\prime}+$ $\varphi b^{\prime}$ for any
$a^{\prime}$ and $b^{\prime}$ by Theorem \ref{multiple V 1} and hence
$\varphi$ is a homomorphism according to Definition
\ref{definition homomorphism}. Hence Theorems \ref{morphisms monotonic} and
\ref{morphisms 1-1} apply.
\end{proof}

\begin{theorem}
\label{multiple V 6}If $a>b$, then $aa^{\prime}>ba^{\prime}$ and $aa^{\prime
}-ba^{\prime}=(a-b)a^{\prime}$. And, more generally, $aa^{\prime}$ has to
$ba^{\prime}$ the same relation (%
$<$%
, =, or
$>$%
) as $a$ has to $a$.
\end{theorem}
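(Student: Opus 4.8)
The plan is to mirror the proof of Theorem \ref{multiple V 5}, but with the roles of the two factors interchanged. There, the left factor was held fixed and the map $a'\mapsto aa'$ was shown to be an endomorphism of $M'$; here I would instead hold the right factor $a'$ fixed and consider the map $\varphi:M\rightarrow M'$ defined by $\varphi a=aa'$.

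First I would verify that $\varphi$ is a homomorphism. For any $a,b\in M$ we have $\varphi(a+b)=(a+b)a'=aa'+ba'=\varphi a+\varphi b$, where the middle equality is exactly Theorem \ref{multiple V 2}. Hence $\varphi$ satisfies Definition \ref{definition homomorphism}.

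Now Theorem \ref{morphisms monotonic} applies to $\varphi$: if $a>b$ then $\varphi a>\varphi b$ and $\varphi a-\varphi b=\varphi(a-b)$, which unwinds to $aa'>ba'$ and $aa'-ba'=(a-b)a'$ by the definition of $\varphi$. For the more general clause, Theorem \ref{morphisms 1-1} applies to the homomorphism $\varphi$, giving that $aa'$ has to $ba'$ the same relation ($<$, $=$, or $>$) as $a$ has to $b$.

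There is essentially no obstacle here; the content is carried entirely by Theorem \ref{multiple V 2} together with the general facts about homomorphisms of magnitude spaces (Theorems \ref{morphisms monotonic} and \ref{morphisms 1-1}). The only point to watch is the asymmetry of generalized multiples: the correct distributive identity to invoke is $(a+b)a'=aa'+ba'$ (Theorem \ref{multiple V 2}), not $a(a'+b')=aa'+ab'$ (Theorem \ref{multiple V 1}), since it is the left factor that is allowed to vary. (One should also note that the closing phrase of the statement, ``as $a$ has to $a$,'' is a typographical slip for ``as $a$ has to $b$.'')
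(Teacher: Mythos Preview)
Your proof is correct and essentially identical to the paper's: fix $a'$, define $\varphi a = aa'$, verify via Theorem \ref{multiple V 2} that $\varphi$ is a homomorphism, and then invoke Theorems \ref{morphisms monotonic} and \ref{morphisms 1-1}. In fact your codomain $\varphi:M\rightarrow M'$ is more accurate than the paper's stated $\varphi:M\rightarrow M$ (since $aa'\in M'$ by Definition \ref{definition generalized product}), and you correctly spot the typographical slip in the statement.
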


\begin{proof}
Fix $a^{\prime}$ and let $\varphi:M\rightarrow M$ be the function defined by
$\varphi a=aa^{\prime}$. Then $\varphi\left(  a+b\right)  =\varphi a+$
$\varphi b$ for any $a$ and $b$ by Theorem \ref{multiple V 2} and hence
$\varphi$ is a homomorphism according to Definition
\ref{definition homomorphism}. Hence Theorems \ref{morphisms monotonic} and
\ref{morphisms 1-1} apply.
\end{proof}

\section{Generalized Product Operator}

In this section we assume that $M$ is a magnitude space with some
distinguished element $1$ such that for each element $a\in M$ there is an
embedding $\Psi a\in E\left(  M\right)  =H\left(  M,M\right)  $ which maps $1$
into $a$. In other words, we assume the same thing as in the previous section
but, in addition, $M=M^{\prime}$. In this case we will write a product
$ab=\left(  \Psi b\right)  a$ as $a\cdot b$ to emphasize that we have here a
binary operator. Here are two examples of this special case.

\begin{enumerate}
\item $M$ is a well ordered magnitude space and $1$ is the smallest element in
$M$ (for instance, $M=%
\mathbb{N}
$).

\item $M$ is a continuous magnitude space and $1$ is an arbitrary element in
$M$ (for instance, $M=%
\mathbb{R}
_{+}$).
\end{enumerate}

From the preceding section $\Psi:M\rightarrow E\left(  M\right)  $ is an
isomorphism and, since $\Psi$ is an isomorphism,%
\[
\Psi\left(  a+b\right)  =\Psi a+\Psi b
\]
where the addition operator on the right hand side represents the addition of
the endomorphisms $\Psi a$ and $\Psi b$ and the result is another endomorphism.

\begin{theorem}
\label{product identity function}$\Psi1=i_{M}$
\end{theorem}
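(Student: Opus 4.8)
The plan is to read off the statement directly from the defining property of $\Psi$ together with the uniqueness result for embeddings. By Definition \ref{definition generalized product}, $\Psi1$ is \emph{the} element of $E\left(M\right)=H\left(M,M\right)$, i.e.\ the unique endomorphism of $M$, which maps $1$ into $1$. So it suffices to exhibit \emph{some} endomorphism of $M$ that fixes $1$ and then invoke uniqueness.

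First I would recall that the identity function $i_{M}:M\rightarrow M$ is an automorphism by Theorem \ref{identity homomorphism}, hence in particular an endomorphism, so $i_{M}\in E\left(M\right)$. Next, by Definition \ref{definition identity function}, $i_{M}1=1$, so $i_{M}$ is an endomorphism of $M$ which maps $1$ into $1$. Since $\Psi1$ is also an embedding of $M$ into $M$ with $\left(\Psi1\right)1=1$, the two embeddings $\Psi1$ and $i_{M}$ agree at the single point $1$. Therefore, by the parenthetical conclusion of Theorem \ref{unique morphism} (if two embeddings from $M$ into $M$ agree at one element they are equal), $\Psi1=i_{M}$.

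I do not anticipate any real obstacle here: the content of the theorem is entirely absorbed into the uniqueness clause of Theorem \ref{unique morphism}, and the only thing to check is the trivial fact that $i_{M}$ is an order-fixing endomorphism sending $1$ to $1$. The one point to be careful about is citing the right prior results — that $i_{M}$ is an endomorphism (Theorem \ref{identity homomorphism}) and that endomorphisms of a magnitude space are pinned down by a single value (Theorem \ref{unique morphism}) — rather than reproving either from scratch.
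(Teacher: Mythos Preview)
Your proof is correct and follows essentially the same route as the paper: both observe that $\Psi1$ and $i_{M}$ are embeddings of $M$ into $M$ sending $1$ to $1$, and conclude $\Psi1=i_{M}$ by Theorem \ref{unique morphism}. Your version is a touch more explicit in citing Theorem \ref{identity homomorphism} and Definition \ref{definition identity function}, but the argument is the same.
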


\begin{proof}
$\Psi1$ is an embedding of $M$ into $M$ which maps $1$ into $1$. And the
identity function $i_{M}$ is also an embedding of $M$ into $M$ which maps
$1$\ into $1$. Therefore $\Psi1=i_{M}$ by Theorem \ref{unique morphism}.
\end{proof}

\begin{theorem}
\label{product vs composition}$\Psi\left(  a\cdot b\right)  =\Psi a\circ\Psi
b$
\end{theorem}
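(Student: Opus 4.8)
The plan is to invoke the rigidity of embeddings supplied by Theorem~\ref{unique morphism}: any two embeddings of $M$ into $M$ that agree at a single point coincide, so it suffices to exhibit $\Psi a\circ\Psi b$ as an embedding of $M$ into $M$ which sends $1$ to $a\cdot b$. Once that is done, $\Psi a\circ\Psi b$ must equal $\Psi(a\cdot b)$, the embedding which by construction sends $1$ to $a\cdot b$ — this embedding existing by the standing hypothesis of the section applied to the element $a\cdot b\in M$.

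First I would observe that $\Psi a\circ\Psi b$ is an endomorphism of $M$, i.e. lies in $E(M)$, since $\Psi a,\Psi b\in E(M)$ and the composition of two embeddings is an embedding by Theorem~\ref{composition of homomophisms}.

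Next I would evaluate this composite at $1$. Unwinding Definition~\ref{definition composition} gives $(\Psi a\circ\Psi b)1=(\Psi a)((\Psi b)1)=(\Psi a)b$, which is $b\cdot a$ rather than $a\cdot b$; to get the factors in the order we want I would first swap the composition using commutativity of endomorphisms (Theorem~\ref{endomorphisms commute}), writing $(\Psi a\circ\Psi b)1=(\Psi b\circ\Psi a)1=(\Psi b)((\Psi a)1)=(\Psi b)a=a\cdot b$, where the final equality is exactly Definition~\ref{definition generalized product} (together with the fact that $\Psi a$ maps $1$ to $a$). With both $\Psi a\circ\Psi b$ and $\Psi(a\cdot b)$ now displayed as embeddings of $M$ into $M$ taking the value $a\cdot b$ at $1$, Theorem~\ref{unique morphism} yields $\Psi(a\cdot b)=\Psi a\circ\Psi b$.

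There is no genuine obstacle here; the argument is a short unwinding of definitions. The only point requiring care is the bookkeeping of which factor plays which role in $(\Psi b)a$ versus $(\Psi a)b$, and this is dispatched by the commutativity of endomorphisms already established in Theorem~\ref{endomorphisms commute}.
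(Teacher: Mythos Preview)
Your proof is correct and follows essentially the same route as the paper: evaluate both $\Psi(a\cdot b)$ and $\Psi a\circ\Psi b$ at $1$, using commutativity of endomorphisms (Theorem~\ref{endomorphisms commute}) to rewrite $(\Psi a\circ\Psi b)1$ as $(\Psi b)a=a\cdot b$, and then conclude by Theorem~\ref{unique morphism}. Your explicit remark that $\Psi a\circ\Psi b\in E(M)$ via Theorem~\ref{composition of homomophisms} is a welcome clarification the paper leaves implicit.
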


\begin{proof}
First $\left(  \Psi\left(  a\cdot b\right)  \right)  1=a\cdot b$ from the
definition of $\Psi$. Second%
\begin{align*}
\left(  \Psi a\circ\Psi b\right)  1  & =\left(  \Psi b\circ\Psi a\right)
1\text{ (Theorem \ref{endomorphisms commute})}\\
& =\Psi b\left(  \left(  \Psi a\right)  1\right)  \text{ (Definition
\ref{definition composition})}\\
& =\left(  \Psi b\right)  a\text{ (definition of }\Psi\text{)}\\
& =a\cdot b\text{ (Definition \ref{definition generalized product})}%
\end{align*}
and hence also $\left(  \Psi a\circ\Psi b\right)  1=a\cdot b$. Therefore
$\Psi\left(  a\cdot b\right)  =\Psi a\circ\Psi b$ by Theorem
\ref{unique morphism}.
\end{proof}

\begin{remark}
By means of the preceding theorem, we can show that the product binary
operator $\cdot$ on $M$ has analogous properties as the binary operator
$\circ$ on $E\left(  M\right)  $ as summarized at the end of Section
\ref{sec endomorphism spaces}.
\end{remark}

\begin{theorem}
\label{product commutative}$a\cdot b=b\cdot a$
\end{theorem}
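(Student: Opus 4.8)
The plan is to reduce commutativity of the product $\cdot$ on $M$ to the commutativity of composition of endomorphisms, which is already in hand. First I would apply Theorem \ref{product vs composition} twice: once to the pair $a,b$ to get $\Psi\left( a\cdot b\right) =\Psi a\circ \Psi b$, and once to the pair $b,a$ to get $\Psi\left( b\cdot a\right) =\Psi b\circ \Psi a$. Since each of $\Psi a$ and $\Psi b$ lies in $E\left( M\right)$, Theorem \ref{endomorphisms commute} gives $\Psi a\circ \Psi b=\Psi b\circ \Psi a$, and hence $\Psi\left( a\cdot b\right) =\Psi\left( b\cdot a\right)$.

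To finish, I would recover the products from these endomorphisms by evaluating at the distinguished element $1$: by Definition \ref{definition generalized product}, $\left( \Psi\left( a\cdot b\right) \right) 1=a\cdot b$ and $\left( \Psi\left( b\cdot a\right) \right) 1=b\cdot a$, so the equality of the two endomorphisms forces $a\cdot b=b\cdot a$. (Alternatively, $\Psi$ is an embedding and so one-to-one, which already yields $a\cdot b=b\cdot a$ directly from $\Psi\left( a\cdot b\right) =\Psi\left( b\cdot a\right)$.)

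I do not expect any real obstacle here: all the substance has been pushed back into Theorem \ref{endomorphisms commute} (which rests on the theory of ratios, in particular Propositions \ref{V 9} and \ref{V 23}) and into Theorem \ref{product vs composition}. The only points requiring a little care are using Theorem \ref{product vs composition} in the correct direction for both orderings of the factors, and remembering that $\cdot$ is definitionally obtained from $\Psi$ by applying it to $1$, so that an identity between the endomorphisms $\Psi\left( a\cdot b\right)$ and $\Psi\left( b\cdot a\right)$ transfers to an identity between the elements $a\cdot b$ and $b\cdot a$.
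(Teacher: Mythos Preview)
Your proposal is correct and follows essentially the same route as the paper: apply Theorem \ref{product vs composition} to both $a\cdot b$ and $b\cdot a$, invoke Theorem \ref{endomorphisms commute} to swap the composition, and then use injectivity of $\Psi$ (the paper cites Theorem \ref{morphisms 1-1}) to conclude $a\cdot b=b\cdot a$. Your alternative of evaluating at $1$ is also fine and amounts to the same thing.
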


\begin{proof}%
\begin{align*}
\Psi\left(  a\cdot b\right)   & =\Psi a\circ\Psi b\text{ (preceding
theorem)}\\
& =\Psi b\circ\Psi a\text{ (Theorem \ref{endomorphisms commute})}\\
& =\Psi\left(  b\cdot a\right)  \text{ (preceding theorem)}%
\end{align*}
Thus $\Psi\left(  a\cdot b\right)  =\Psi\left(  b\cdot a\right)  $ and
therefore $a\cdot b=b\cdot a$ by Theorem \ref{morphisms 1-1}.
\end{proof}

\begin{theorem}
\label{product associative}$\left(  a\cdot b\right)  \cdot c=a\cdot\left(
b\cdot c\right)  $
\end{theorem}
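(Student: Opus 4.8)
The plan is to push the desired identity through the isomorphism $\Psi:M\rightarrow E\left(  M\right)  $ of the preceding section, thereby reducing associativity of the product $\cdot$ on $M$ to associativity of the composition operator $\circ$ on $E\left(  M\right)  $, which is already established (Theorem \ref{composition is associative}).

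First I would apply $\Psi$ to each side and use Theorem \ref{product vs composition}. On the left,
\[
\Psi\left(  \left(  a\cdot b\right)  \cdot c\right)  =\Psi\left(  a\cdot
b\right)  \circ\Psi c=\left(  \Psi a\circ\Psi b\right)  \circ\Psi c,
\]
and on the right,
\[
\Psi\left(  a\cdot\left(  b\cdot c\right)  \right)  =\Psi a\circ\Psi\left(
b\cdot c\right)  =\Psi a\circ\left(  \Psi b\circ\Psi c\right)  .
\]
Next I would invoke Theorem \ref{composition is associative} to get $\left(  \Psi a\circ\Psi b\right)  \circ\Psi c=\Psi a\circ\left(  \Psi b\circ\Psi c\right)  $, so that $\Psi\left(  \left(  a\cdot b\right)  \cdot c\right)  =\Psi\left(  a\cdot\left(  b\cdot c\right)  \right)  $. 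Finally, since $\Psi$ is an embedding (indeed an isomorphism), it is one-to-one, so from the equality of the two images I would conclude $\left(  a\cdot b\right)  \cdot c=a\cdot\left(  b\cdot c\right)  $, exactly as in the proof of Theorem \ref{product commutative} via Theorem \ref{morphisms 1-1}.

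There is no genuine obstacle here; the proof is a short transport-of-structure argument. The only points requiring a moment's care are checking that the two displayed chains correctly match the pattern $\Psi\left(  x\cdot y\right)  =\Psi x\circ\Psi y$ with the intended grouping, and that the final cancellation is justified by the injectivity of $\Psi$.
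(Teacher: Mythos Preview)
Your proposal is correct and matches the paper's proof essentially line for line: apply $\Psi$, use Theorem \ref{product vs composition} twice on each side, invoke associativity of composition (Theorem \ref{composition is associative}), and conclude via the injectivity of $\Psi$ (Theorem \ref{morphisms 1-1}).
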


\begin{proof}%
\begin{align*}
\Psi\left(  \left(  a\cdot b\right)  \cdot c\right)   & =\Psi\left(  a\cdot
b\right)  \circ\Psi c\text{ (Theorem \ref{product vs composition})}\\
& =\left(  \Psi a\circ\Psi b\right)  \circ\Psi c\text{ (Theorem
\ref{product vs composition})}\\
& =\Psi a\circ\left(  \Psi b\circ\Psi c\right)  \text{ (Theorem
\ref{composition is associative})}\\
& =\Psi a\circ\Psi\left(  b\cdot c\right)  \text{ (Theorem
\ref{product vs composition})}\\
& =\Psi\left(  a\cdot\left(  b\cdot c\right)  \right)  \text{ (Theorem
\ref{product vs composition})}%
\end{align*}
Thus $\Psi\left(  \left(  a\cdot b\right)  \cdot c\right)  =\Psi\left(
a\cdot\left(  b\cdot c\right)  \right)  $ and therefore $\left(  a\cdot
b\right)  \cdot c=a\cdot\left(  b\cdot c\right)  $ by Theorem
\ref{morphisms 1-1}.
\end{proof}

\section{Symmetric Magnitude Spaces}

From above, we have the definitions of binary product operators for the
natural numbers and the positive real numbers and we have shown the properties
which these two binary operators have in common. There are, of course,
differences between the product operators for the natural and positive real
numbers. In particular, given any $x,y\in%
\mathbb{R}
_{+}$ there is a $q\in%
\mathbb{R}
_{+}$ (called the quotient of $y$ and $x$) such that $y=q\cdot x$. Once again
it is useful to develop this additional property of the real numbers from a
general perspective; for there are also noncontinuous magnitude spaces in
which every pair of elements has a quotient.

\begin{definition}
\label{definition symmetric}A magnitude space $M$ is \textbf{symmetric} if
there is, for every pair $a,b\in M$, an endomorphism of $M$ which maps $a$
into $b$.
\end{definition}

\begin{remark}
Alternatively, a magnitude space $M$ is \textbf{symmetric} if, for every three
elements $a,b,c\in M$, there is a fourth proportional (i.e. an element $d\in
M$ such that $a:b=c:d$).
\end{remark}

\begin{proof}
Theorem \ref{complete f(a)=b}.
\end{proof}

\begin{theorem}
\label{endomorhpism of symmetric is onto}If $M$ is a symmetric magnitude, then
for each $\varphi\in E\left(  M\right)  $, there is a corresponding $\chi\in
E\left(  M\right)  $ such that $\chi\circ\varphi=\varphi\circ\chi=i_{M}$ and
each of $\varphi$ and $\chi$ are onto (and thus automorphisms).
\end{theorem}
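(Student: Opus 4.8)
The plan is to construct $\chi$ directly from the symmetry hypothesis and then identify the composite with $i_M$ using the uniqueness theorem for embeddings (Theorem \ref{unique morphism}). First I would fix an arbitrary element $a\in M$; a symmetric magnitude space carries no distinguished element, but any single element suffices as a test point. Since $\varphi\in E(M)$ we have $\varphi a\in M$, and because $M$ is symmetric there is an endomorphism $\chi\in E(M)$ carrying $\varphi a$ back to $a$, that is, $\chi(\varphi a)=a$.

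Next I would note that $\chi\circ\varphi$ is again an endomorphism of $M$ by Theorem \ref{composition of homomophisms}, and by construction $(\chi\circ\varphi)a=\chi(\varphi a)=a$. The identity map $i_M$ is an embedding of $M$ into $M$ by Theorem \ref{identity homomorphism}, and $i_M a=a$. Thus $\chi\circ\varphi$ and $i_M$ are two embeddings of $M$ into itself agreeing at the single point $a$, so Theorem \ref{unique morphism} forces $\chi\circ\varphi=i_M$. Since endomorphisms commute (Theorem \ref{endomorphisms commute}), we also get $\varphi\circ\chi=\chi\circ\varphi=i_M$.

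The remaining steps are bookkeeping. From $\chi\circ\varphi=i_M$ and Theorem \ref{proving function is onto} (with $\varphi$ as the first function and $\chi$ as the second), $\chi$ is onto; from $\varphi\circ\chi=i_M$ and the same theorem with the roles swapped, $\varphi$ is onto. Finally, $\varphi$ and $\chi$ are one-to-one by Theorem \ref{morphisms 1-1}, so each is an endomorphism that is both one-to-one and onto, hence an automorphism by Definition \ref{definition endomorphism automorphism}.

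I do not expect a genuine obstacle here. The only points requiring care are the direction in which symmetry is invoked — $\chi$ must undo $\varphi$ at the test point, so symmetry is applied to map $\varphi a$ to $a$ rather than $a$ to $\varphi a$ — and checking that every map involved ($\chi$, $\varphi$, $\chi\circ\varphi$, $i_M$) is genuinely an embedding so that Theorem \ref{unique morphism} is applicable.
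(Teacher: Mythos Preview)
Your proposal is correct and follows essentially the same route as the paper: pick a test point $a$, use symmetry to obtain $\chi$ sending $\varphi a$ back to $a$, invoke Theorem \ref{unique morphism} to identify $\chi\circ\varphi$ with $i_M$, and then use commutativity of endomorphisms (Theorem \ref{endomorphisms commute}) together with Theorem \ref{proving function is onto} to finish. The only difference is that you spell out a few more citations (Theorems \ref{composition of homomophisms} and \ref{morphisms 1-1}) that the paper leaves implicit.
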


\begin{proof}
Let $\varphi\in E\left(  M\right)  $ and pick any element $a\in M$. Since $M$
is a symmetric magnitude space, there is another embedding $\chi\in E\left(
M\right)  $ such that $\chi$ maps $\varphi a$ into $a$. Then $\chi\circ
\varphi$ maps $a$ into $a$. And also $i_{M}$ maps $a$ \ into $a$ by Definition
\ref{definition identity function}. Hence $\chi\circ\varphi=i_{M}$ by Theorem
\ref{unique morphism}. But also $\chi\circ\varphi=\varphi\circ\chi$ by Theorem
\ref{endomorphisms commute} and so $\varphi\circ\chi=i_{M}$. Therefore
$\varphi$ and $\chi$ are each onto by Theorem \ref{proving function is onto}
and so $\varphi$ and $\chi$ are automorphisms according to Definition
\ref{definition endomorphism automorphism}.
\end{proof}

In the remainder of this section $M$ is a symmetric magnitude space with one
element designated by $1$. Then for any $a\in M$, there is a unique embedding
of $M$ into $M$ which maps $1$ into $a$. From the preceding section, we can
define an isomorphism $\Psi:M\rightarrow E\left(  M\right)  $ such that for
$a\in M$, $\Psi a$ is the unique element in $E\left(  M\right)  $ which maps
$1$ into $a$. And, as before, we can define a binary operator on $M$ according
to $a\cdot b=\left(  \Psi b\right)  a$. A property of this binary product
which differs from the product of two natural numbers is given in the
following theorem.

\begin{theorem}
\label{unique quotient}If $a,b\in M$, there exists a unique $d\in M$ such that
$b=d\cdot a$.
\end{theorem}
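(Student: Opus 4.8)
The idea is to unwind the definition of the product and reduce the claim to the statement that the endomorphism $\Psi a$ is a bijection. By Definition \ref{definition generalized product} (in the specialized form $x\cdot y=(\Psi y)x$ used in this section), we have $d\cdot a=(\Psi a)d$ for every $d\in M$. Hence the equation $b=d\cdot a$ is exactly the equation $(\Psi a)d=b$, and the whole theorem amounts to saying that $\Psi a$ hits $b$ exactly once.

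First I would note that $\Psi a\in E(M)$ is the unique endomorphism of $M$ carrying $1$ to $a$; it exists because $M$ is symmetric (Definition \ref{definition symmetric} guarantees an endomorphism taking $1$ to $a$) and is unique by Theorem \ref{unique morphism}. Next I would invoke Theorem \ref{endomorhpism of symmetric is onto}: since $M$ is symmetric, every element of $E(M)$, in particular $\Psi a$, is an automorphism, hence onto. Therefore there is some $d\in M$ with $(\Psi a)d=b$, i.e. $b=d\cdot a$.

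For uniqueness, I would use that $\Psi a$, being an embedding, is one-to-one (Definition \ref{definition homomorphism}): if $d\cdot a=d'\cdot a$, then $(\Psi a)d=(\Psi a)d'$, so $d=d'$. This gives the unique $d$ claimed.

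\textbf{Main obstacle.} There is no real obstacle here — the content is entirely front-loaded into Theorem \ref{endomorhpism of symmetric is onto}. The only point requiring a moment's care is the bookkeeping identity $d\cdot a=(\Psi a)d$, i.e. correctly matching the operand order in $x\cdot y=(\Psi y)x$ so that it is $\Psi a$ (not $\Psi d$) whose surjectivity and injectivity are being used; once that is pinned down, the argument is immediate.
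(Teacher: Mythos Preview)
Your proposal is correct and matches the paper's proof essentially line for line: the paper likewise observes that $\Psi a$ is an automorphism by Theorem \ref{endomorhpism of symmetric is onto}, hence one-to-one and onto, so there is a unique $d$ with $(\Psi a)d=b$, and then translates this via Definition \ref{definition generalized product} into $d\cdot a=b$. Your extra remark on why $\Psi a$ exists is fine but already assumed in the section's standing hypotheses.
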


\begin{proof}
The embedding $\Psi a$ from $M$ into $M$ is an automorphism by the preceding
theorem and hence is one-to-one and onto by Definition
\ref{definition endomorphism automorphism}. Hence there is a unique $d\in M$
such that $\left(  \Psi a\right)  d=b$ by Definitions
\ref{definition one-to-one} and \ref{definition onto}. Therefore there is a
unique $d\in M$ such that $d\cdot a=b $ by Definition
\ref{definition generalized product}.
\end{proof}

\begin{definition}
\label{definition quotient}For $a,b\in M$, we denote by $b/a$ the unique
element $d$ of $M$ such that $b=d\cdot a$.
\end{definition}

\begin{theorem}
$b$ has to $a$ the same relation (%
$<$%
, =, or
$>$%
) as $b/a$ has to $1$.
\end{theorem}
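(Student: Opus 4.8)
The plan is to rewrite both $b$ and $a$ as images of $b/a$ and $1$ under a single embedding, and then invoke the order-preservation property of embeddings (Theorem \ref{morphisms 1-1}). Concretely, recall that the product $a\cdot b$ was defined so that $a\cdot b=\left(\Psi b\right)a$, where $\Psi\colon M\rightarrow E\left(M\right)$ is the isomorphism sending each element to the unique endomorphism mapping $1$ into that element.

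First I would unwind the definition of the quotient: by Definition \ref{definition quotient}, $b=\left(b/a\right)\cdot a$, and then by Definition \ref{definition generalized product} this says $b=\left(\Psi a\right)\left(b/a\right)$. Next I would observe that $\Psi a$ maps $1$ into $a$ by the very definition of $\Psi$, i.e.\ $a=\left(\Psi a\right)1$ (equivalently, $a=1\cdot a$ by Theorem \ref{multiple identity element}). So we have
\[
b=\left(\Psi a\right)\left(b/a\right)\quad\text{and}\quad a=\left(\Psi a\right)1 .
\]
Finally, $\Psi a$ is an embedding (indeed an automorphism, by Theorem \ref{endomorhpism of symmetric is onto}), so Theorem \ref{morphisms 1-1} tells us that $\left(\Psi a\right)\left(b/a\right)$ has to $\left(\Psi a\right)1$ the same relation ($<$, $=$, or $>$) as $b/a$ has to $1$. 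Substituting the two displayed equalities, $b$ has to $a$ the same relation as $b/a$ has to $1$, which is the claim.

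I do not expect a genuine obstacle here: the statement is essentially an immediate corollary of the definition of the quotient together with the fact that embeddings preserve the trichotomy relation. The only point requiring minor care is bookkeeping about which argument of $\Psi$ and of the product is which — making sure that $\left(b/a\right)\cdot a$ really is $\left(\Psi a\right)\left(b/a\right)$ and not $\left(\Psi\left(b/a\right)\right)a$ — but commutativity of the product (Theorem \ref{product commutative}) makes this harmless in any case.
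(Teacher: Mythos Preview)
Your proof is correct and essentially the same as the paper's. The paper invokes Theorem \ref{multiple V 6} (that $x\cdot a$ has to $y\cdot a$ the same relation as $x$ has to $y$) together with $b=\left(b/a\right)\cdot a$ and $1\cdot a=a$; you simply unfold Theorem \ref{multiple V 6} into its proof ingredients, namely that the map $x\mapsto x\cdot a=\left(\Psi a\right)x$ is an embedding and Theorem \ref{morphisms 1-1}.
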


\begin{proof}
$\left(  b/a\right)  \cdot a$ has to $1\cdot a$ the same relation (%
$<$%
, =, or
$>$%
) as $b/a$ has to $1$ by Theorem \ref{multiple V 6}. But $b=\left(
b/a\right)  \cdot a$ by Definition \ref{definition quotient} and $1\cdot a=a$
by Theorem \ref{multiple identity element}. Therefore $b$ has to $a$ the same
relation (%
$<$%
, =, or
$>$%
) as $b/a$ has to $1$.
\end{proof}

\section{Power Functions}

We have mentioned above that it is worthwhile to view the formation of
products in a general way. As a concrete example, let us consider the
definition of $x^{y}$ where $x$ and $y$ are positive real numbers and $x>1$.
In this section $\cdot$ is the product operator in $%
\mathbb{R}
_{+}$.

\begin{definition}
\label{definition R>1}Let $%
\mathbb{R}
_{>1}$ be the elements of $%
\mathbb{R}
_{+}$ which are greater than $1$.
\end{definition}

\begin{theorem}
If $x,y\in%
\mathbb{R}
_{>1}$, then $x\cdot y\in%
\mathbb{R}
_{>1}$.
\end{theorem}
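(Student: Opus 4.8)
The plan is to reduce the claim to the single inequality $x\cdot y>1$. Indeed, by Definition \ref{definition R>1}, an element of $\mathbb{R}_{+}$ lies in $\mathbb{R}_{>1}$ precisely when it is greater than $1$, and $x\cdot y$ is automatically an element of $\mathbb{R}_{+}$ since $\cdot$ is a binary operator on $\mathbb{R}_{+}$ (here $M=M'=\mathbb{R}_{+}$ in the setting of the previous section, with $1$ the distinguished element).

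First I would apply Theorem \ref{multiple V 6} with $a=x$, $b=1$, and the multiplier $y$: from $x>1$ it follows that $x\cdot y>1\cdot y$. Next I would rewrite $1\cdot y$ as $y$ by Theorem \ref{multiple identity element}, so that $x\cdot y>y$. Since $y>1$ by hypothesis, transitivity of $<$ (Theorem \ref{Transitivity of >}) gives $x\cdot y>1$. Finally, Definition \ref{definition R>1} lets me conclude $x\cdot y\in\mathbb{R}_{>1}$.

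There is no genuine obstacle here; the one place to be careful is the asymmetry of the notation $a\cdot b=\left(\Psi b\right)a$, which makes Theorem \ref{multiple V 6} the statement that multiplication is monotone in its \emph{left} argument. If one prefers to think of $y$ as the quantity being scaled, one can instead combine Theorem \ref{multiple V 5} (monotonicity in the right argument) with commutativity of $\cdot$ (Theorem \ref{product commutative}); either route reaches $x\cdot y>y>1$ in the same two monotonicity steps.
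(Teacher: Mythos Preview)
Your proposal is correct and follows essentially the same route as the paper: from $x>1$ deduce $x\cdot y>1\cdot y$ via Theorem~\ref{multiple V 6}, simplify $1\cdot y=y$ via Theorem~\ref{multiple identity element}, and then use transitivity with $y>1$ to conclude. Your added remark about the asymmetry of $a\cdot b=(\Psi b)a$ and the alternative through Theorem~\ref{multiple V 5} plus commutativity is a helpful observation but not needed for the argument.
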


\begin{proof}
If $x,y\in%
\mathbb{R}
_{>1}$, then $x>1$ and $y>1$ by Definition \ref{definition R>1}. And $x\cdot
y>1\cdot y$ by Theorem \ref{multiple V 6} and $1\cdot y=y$ by Theorem
\ref{multiple identity element}. Thus $x\cdot y>y$ and $y>1$. Therefore
$x\cdot y>1$ by Theorem \ref{Transitivity of >} and $x\cdot y\in%
\mathbb{R}
_{>1}$ by Definition \ref{definition R>1}.
\end{proof}

\begin{theorem}
$%
\mathbb{R}
_{>1}$ with the multiplicative operator $\cdot$ is a magnitude space.
\end{theorem}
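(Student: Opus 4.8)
The plan is to check the three clauses of Definition \ref{definition magnitude} for the set $\mathbb{R}_{>1}$ equipped with $\cdot$. By the preceding theorem, $\cdot$ really is a binary operation on $\mathbb{R}_{>1}$, and $\mathbb{R}_{>1}$ is nonempty since $1<1+1$ (by Definition \ref{definition <}, as $1+1=1+1$), so $1+1\in\mathbb{R}_{>1}$ by Definition \ref{definition R>1}. Associativity $x\cdot(y\cdot z)=(x\cdot y)\cdot z$ and commutativity $x\cdot y=y\cdot x$ are inherited directly from Theorems \ref{product associative} and \ref{product commutative}, which hold for $\cdot$ on all of $\mathbb{R}_{+}$. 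So the only substantive point is trichotomy.

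For trichotomy I would pass through the ordinary order $<$ on $\mathbb{R}_{+}$ by proving the equivalence: for $x,y\in\mathbb{R}_{>1}$, there is some $d\in\mathbb{R}_{>1}$ with $x=y\cdot d$ if and only if $y<x$ in $\mathbb{R}_{+}$. For the forward direction, if $x=y\cdot d$ with $d>1$, then $d\cdot y>1\cdot y$ by Theorem \ref{multiple V 6}, and $1\cdot y=y$ by Theorem \ref{multiple identity element}, while $d\cdot y=y\cdot d=x$ by Theorem \ref{product commutative}; hence $x>y$. For the backward direction, note that $\mathbb{R}_{+}$ is a symmetric magnitude space with designated element $1$ (it is continuous, so this follows from Theorem \ref{complete f(a)=b}, cf.\ the remark after Definition \ref{definition symmetric}), so Theorem \ref{unique quotient} supplies a unique $d\in\mathbb{R}_{+}$ with $x=d\cdot y$. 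If $d=1$ then $x=1\cdot y=y$ by Theorem \ref{multiple identity element}, and if $d<1$ then $x=d\cdot y<1\cdot y=y$ by Theorems \ref{multiple V 6} and \ref{multiple identity element}; since we assumed $y<x$, trichotomy on $\mathbb{R}_{+}$ (Theorem \ref{< is trichotomous}) rules both out, leaving $d>1$, i.e.\ $d\in\mathbb{R}_{>1}$, and $x=y\cdot d$ by commutativity. Interchanging $x$ and $y$ gives the companion equivalence: there is $d\in\mathbb{R}_{>1}$ with $y=x\cdot d$ if and only if $x<y$.

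Granting these two equivalences, trichotomy for $\mathbb{R}_{>1}$ drops out: by Theorem \ref{< is trichotomous} exactly one of $y<x$, $x=y$, $x<y$ holds in $\mathbb{R}_{+}$, and these translate respectively into exactly one of ``$x=y\cdot d$ for some $d\in\mathbb{R}_{>1}$'', ``$x=y$'', ``$y=x\cdot d$ for some $d\in\mathbb{R}_{>1}$''. This establishes clause (iii), and with (i) and (ii) already in hand, $\mathbb{R}_{>1}$ is a magnitude space.

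The main obstacle is the backward direction of the equivalence: one must first know the witness $d$ with $x=d\cdot y$ exists at all, which is exactly the symmetry of $\mathbb{R}_{+}$ (equivalently, that $\Psi y$ is an automorphism), and then one must check that $d$ lands in $\mathbb{R}_{>1}$ rather than merely in $\mathbb{R}_{+}$. Once the quotient is available, the remainder is routine application of the order-preservation properties of the generalized product proved earlier.
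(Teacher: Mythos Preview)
Your proposal is correct and follows essentially the same route as the paper: closure under $\cdot$ from the preceding theorem, associativity and commutativity inherited from Theorems \ref{product associative} and \ref{product commutative}, and trichotomy obtained by passing through the additive order on $\mathbb{R}_{+}$ via the quotient from Theorem \ref{unique quotient}. The paper is terser---it invokes the quotient $x/y$ and asserts that $x/y\in\mathbb{R}_{>1}$ only if $y<x$ (implicitly using the theorem that $b$ has to $a$ the same relation as $b/a$ has to $1$)---whereas you inline that step via Theorem \ref{multiple V 6}; you also supply the nonemptiness check the paper omits.
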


\begin{proof}
The multiplicative operator $\cdot$ is a binary operator on $%
\mathbb{R}
_{>1}$ by the preceding theorem. And $\cdot$ is associative and commutative by
Theorems \ref{product associative} and \ref{product commutative}.

It remains to show that the binary operator $\cdot$ on $%
\mathbb{R}
_{>1}$ is trichotomous. Let $x,y\in%
\mathbb{R}
_{>1}$. Note that $y=\left(  y/x\right)  \cdot x=x\cdot\left(  y/x\right)  $
and $x=\left(  x/y\right)  \cdot y=y\cdot\left(  x/y\right)  $ by Definition
\ref{definition quotient} and Theorem \ref{product commutative}. From
trichotomy in $%
\mathbb{R}
_{+}$, exactly one of the following is true: $y<x$, or $y=x$, or $y>x$.

Now in each case, there is a unique $d\in%
\mathbb{R}
_{+}$, namely $d=x/y$, such that $x=y\cdot d$ by Theorem \ref{unique quotient}%
. But $d=x/y\in%
\mathbb{R}
_{>1}$ only if $y<x$. Similarly, in each case, there is a unique $d\in%
\mathbb{R}
_{+}$, namely $d=y/x$ such that $y=x\cdot d$. But $d=y/x\in%
\mathbb{R}
_{>1}$ only if $y>x$. Thus there are three mutually exclusive cases: $x=y\cdot
d$ for some $d\in%
\mathbb{R}
_{>1}$, or $x=y$, or $y=x\cdot d$ for some $d\in%
\mathbb{R}
_{>1}$. Thus $%
\mathbb{R}
_{>1}$ with the $\cdot$ binary operator has the trichotomy property in
Definition \ref{definition magnitude}.
\end{proof}

\begin{theorem}
$%
\mathbb{R}
_{>1}$ with the multiplicative operator $\cdot$ is a continuous magnitude space.
\end{theorem}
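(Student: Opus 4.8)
The plan is to unwind Definition~\ref{definition continuous}: a magnitude space is continuous exactly when it is complete (Definition~\ref{definition complete}) and nondiscrete (Definition~\ref{definition discrete}), and the preceding theorem already shows that $\mathbb{R}_{>1}$ under $\cdot$ is a magnitude space. The step that makes everything else routine is to identify the strict order $<$ that $(\mathbb{R}_{>1},\cdot)$ carries via Definition~\ref{definition <} with the restriction to $\mathbb{R}_{>1}$ of the ordinary order of $\mathbb{R}_{+}$. Indeed, for $x,y\in\mathbb{R}_{>1}$ we have $x<y$ in $(\mathbb{R}_{>1},\cdot)$ iff $y=d\cdot x$ for some $d\in\mathbb{R}_{>1}$; the only candidate is $d=y/x$ (Definition~\ref{definition quotient}), for which $y=(y/x)\cdot x$, and since $(y/x)\cdot x$ has to $1\cdot x$ the same relation as $y/x$ has to $1$ by Theorem~\ref{multiple V 6}, while $1\cdot x=x$ by Theorem~\ref{multiple identity element}, this happens iff $y/x>1$, i.e.\ iff $y>x$ in $\mathbb{R}_{+}$. (This is essentially the trichotomy computation already carried out in the proof of the preceding theorem.) Consequently, for any subset of $\mathbb{R}_{>1}$ the notions of upper bound, least upper bound, and smallest element are the same whether taken in $(\mathbb{R}_{>1},\cdot)$ or in $\mathbb{R}_{+}$.

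For nondiscreteness I would argue that $(\mathbb{R}_{>1},\cdot)$ has no smallest element. Fix $x\in\mathbb{R}_{>1}$; then $1<x$, so $x-1\in\mathbb{R}_{+}$ is defined (Definition~\ref{definition subtraction}). Since $\mathbb{R}_{+}$ is nondiscrete it has no smallest element, so there is $e\in\mathbb{R}_{+}$ with $e<x-1$. Then $1<1+e<1+(x-1)=x$ using Definition~\ref{definition <}, Theorem~\ref{Translation by a}, and Definition~\ref{definition subtraction}; hence $1+e\in\mathbb{R}_{>1}$ and $1+e<x$ in $\mathbb{R}_{+}$, so $1+e<x$ in $(\mathbb{R}_{>1},\cdot)$ by the first step. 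As $x$ was arbitrary, $\mathbb{R}_{>1}$ has no smallest element and is nondiscrete by Definition~\ref{definition discrete}.

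For completeness, let $A$ be a nonempty subset of $\mathbb{R}_{>1}$ with an upper bound in $(\mathbb{R}_{>1},\cdot)$. By the first step $A$ is a nonempty subset of $\mathbb{R}_{+}$ with an upper bound in $\mathbb{R}_{+}$, so since $\mathbb{R}_{+}$ is complete, $A$ has a least upper bound $s\in\mathbb{R}_{+}$. Choosing $a_{0}\in A$ gives $1<a_{0}\leq s$, hence $s>1$ by Theorem~\ref{Transitivity mixed} and $s\in\mathbb{R}_{>1}$. Likewise every upper bound of $A$ in $\mathbb{R}_{+}$ is $\geq a_{0}>1$ and so lies in $\mathbb{R}_{>1}$; thus the set of upper bounds of $A$ in $\mathbb{R}_{+}$ equals the set of upper bounds of $A$ in $(\mathbb{R}_{>1},\cdot)$, and $s$ is the least element of this common set. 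Therefore $(\mathbb{R}_{>1},\cdot)$ is complete by Definition~\ref{definition complete}, and being also nondiscrete it is continuous.

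I expect the only real obstacle to be the first step: pinning down that the magnitude-space order of $(\mathbb{R}_{>1},\cdot)$, which is defined from the multiplicative operation, genuinely coincides with the restricted $\mathbb{R}_{+}$-order. Once that identification is in place, completeness and nondiscreteness transfer from $\mathbb{R}_{+}$ almost mechanically, the only extra ingredient being that $\mathbb{R}_{>1}$ is the final segment $\{x : x>1\}$, so upper bounds of its subsets automatically remain inside it.
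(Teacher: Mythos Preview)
Your proof is correct and follows essentially the same route as the paper: first identify the magnitude-space order on $(\mathbb{R}_{>1},\cdot)$ with the restriction of the ambient order on $\mathbb{R}_{+}$ (the paper does this by pointing back to the trichotomy computation in the preceding theorem), then transfer completeness from $\mathbb{R}_{+}$ using the observation that upper bounds of a subset of $\mathbb{R}_{>1}$ automatically lie above $1$.

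One point worth noting: you explicitly verify nondiscreteness, whereas the paper's proof jumps directly from completeness to ``continuous'' without addressing it. Your argument via $1+e$ with $e<x-1$ is a clean way to fill this in, and in that respect your write-up is more complete than the paper's own proof.
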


\begin{proof}
We begin with an elementary observation. If $A$ is a nonempty set with some
element $a$ greater than $1$ and $b$ is an upper bound of $A$, then $b>1$. For
if $b$ is an upper bound of $A$, then $a\leq b$ by Definition
\ref{definition bounds}. And from $1<a$ and $a\leq b$ follows $1<b$ by Theorem
\ref{Transitivity mixed}.

Definition \ref{definition <} defines $<$ and $>$ in a magnitude space in
terms of the binary operator of the magnitude space. In the proof of the
preceding theorem, it may be observed that for $x,y\in%
\mathbb{R}
_{>1}$, $x$ has to $y$ the same relation (%
$<$%
, =, or
$>$%
) in the order defined by $+$ binary operator as $x$ has to $y$ in the order
defined defined by the $\cdot$ operator.

Now let $A$ be a nonempty subset of $%
\mathbb{R}
_{>1}$ with a nonempty set $B$ of upper bounds with respect to the order
defined by the $\cdot$ operator. From the preliminary observation, $B$
coincides with the set of upper bounds of $A$ with respect to the order
defined by the $+$ operator. But $%
\mathbb{R}
_{+}$ is continuous by Definition \ref{definition real number} and hence $B$
has a smallest element with respect to the order defined by the $+$ operator
by Definition \ref{definition continuous}. Therefore $B$ has a smallest
element with respect to the order defined by the $\cdot$ operator. Therefore $%
\mathbb{R}
_{>1}$ is continuous by Definition \ref{definition continuous}.
\end{proof}

\ 

Up to the preceding theorem, we have consistently used the symbol $+$ for the
binary operator in a magnitude space. To be precise, if we have two magnitude
spaces $M$ and $M^{\prime}$, the binary operators are in general not the same.
If $a,b\in M$ and $a^{\prime},b^{\prime}\in M^{\prime}$ then the $+$ sign in
the expression $a+b$ is understood to be the binary operator in $M$ and the
$+$ sign in the expression $a^{\prime}+b^{\prime}$ is understood to be the
binary operator in $M^{\prime}$. Admittedly it would be more precise to denote
the binary operator in $M^{\prime}$ by $+^{\prime}$ and to write $a^{\prime
}+^{\prime}b^{\prime}$ but we have left it up to reader to make this
distinction. In particular we defined a map $\varphi:M\rightarrow M^{\prime}$
to be a homomorphism if for all $a,b\in M$, $\varphi\left(  a+b\right)
=\left(  \varphi a\right)  +\left(  \varphi b\right)  $ and the first $+$ sign
refers to the binary operator in $M $ while the second $+$ sign refers to the
binary operator in $M^{\prime}$. And if we had a different symbol for the
binary operator in $M^{\prime}$, say $\times$, we would of course say that
$\varphi:M\rightarrow M^{\prime} $ is an homomorphism if for all $a,b\in M$,
$\varphi\left(  a+b\right)  =\left(  \varphi a\right)  \times\left(  \varphi
b\right)  $. In the present case, we denote the binary operator in the
magnitude space $%
\mathbb{R}
_{>1}$ by $\cdot$ and so by a homomorphism $%
\mathbb{R}
_{+}$ into $%
\mathbb{R}
_{>1}$ we mean a function $\varphi:%
\mathbb{R}
_{+}\rightarrow%
\mathbb{R}
_{>1}$ such that for all $x,y\in%
\mathbb{R}
_{+}$, $\varphi\left(  x+y\right)  =\left(  \varphi x\right)  \cdot\left(
\varphi y\right)  $.

Having shown that $%
\mathbb{R}
_{>1}$ is a continuous magnitude space, we know that for each $y\in%
\mathbb{R}
_{>1}$ there exists a unique embedding of $%
\mathbb{R}
_{+}$ into $%
\mathbb{R}
_{>1}$ which maps $1$ into $y$ by Theorem \ref{complete f(a)=b}. The
assumptions of Section \ref{sec Generalized Multiple} are satisfied with $M=%
\mathbb{R}
_{+}$ and $M^{\prime}=%
\mathbb{R}
_{>1}$. And, in accordance with Section \ref{sec Generalized Multiple}, for
each $x\in%
\mathbb{R}
_{>1}$ we define $\Psi x$ to be the unique embedding of $%
\mathbb{R}
_{+}$ into $%
\mathbb{R}
_{>1}$ which maps $1$ into $x$. For $y\in%
\mathbb{R}
_{+}$ and $x\in%
\mathbb{R}
_{>1}$ we then have, as before, a product $yx=\left(  \Psi x\right)  y\in%
\mathbb{R}
_{>1}$. To avoid confusing this definition of $yx$ with multiplication in $%
\mathbb{R}
_{+}$ we make the following definition.

\begin{definition}
If $x\in%
\mathbb{R}
_{>1}$ then we denote by $\Psi x$ the unique embedding of $%
\mathbb{R}
_{+}\rightarrow%
\mathbb{R}
_{>1}$ which maps $1$ into $x$. And for $y\in%
\mathbb{R}
_{+}$, we denote $\left(  \Psi x\right)  y$ by $x^{y}$.
\end{definition}

\begin{theorem}
$\left(  x_{1}\cdot x_{2}\right)  ^{y}=x_{1}^{y}\cdot x_{2}^{y}$
\end{theorem}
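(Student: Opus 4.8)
The plan is to reduce the identity to the homomorphism property of the map $\Psi$. Recall that by definition $x^{y}=\left(\Psi x\right)y$, where $\Psi x$ is the unique embedding of $\mathbb{R}_{+}$ into $\mathbb{R}_{>1}$ sending $1$ to $x$; existence and uniqueness of $\Psi x$ come from Theorem \ref{complete f(a)=b} together with the fact that $\mathbb{R}_{>1}$ is a continuous magnitude space. As noted in the paragraph preceding the statement, the assumptions of Section \ref{sec Generalized Multiple} hold with $M=\mathbb{R}_{+}$ and $M^{\prime}=\mathbb{R}_{>1}$, so $\Psi:\mathbb{R}_{>1}\rightarrow H\left(\mathbb{R}_{+},\mathbb{R}_{>1}\right)$ is an isomorphism; in particular it respects the magnitude-space operations, giving $\Psi\left(x_{1}\cdot x_{2}\right)=\Psi x_{1}+\Psi x_{2}$, where $+$ on the right is addition of embeddings in $H\left(\mathbb{R}_{+},\mathbb{R}_{>1}\right)$.

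The one point requiring care is bookkeeping of which binary operator is which. The codomain magnitude space here is $\mathbb{R}_{>1}$, whose operator is $\cdot$, so Definition \ref{definition sum of embeddings} unwinds to $\left(\varphi+\chi\right)a=\varphi a\cdot\chi a$ for $\varphi,\chi\in H\left(\mathbb{R}_{+},\mathbb{R}_{>1}\right)$ and $a\in\mathbb{R}_{+}$. With this in hand the proof is a short chain:
\[
\left(x_{1}\cdot x_{2}\right)^{y}=\left(\Psi\left(x_{1}\cdot x_{2}\right)\right)y=\left(\Psi x_{1}+\Psi x_{2}\right)y=\left(\Psi x_{1}\right)y\cdot\left(\Psi x_{2}\right)y=x_{1}^{y}\cdot x_{2}^{y},
\]
where the first and last equalities are the definition of $x^{y}$, the second is the homomorphism property of $\Psi$, and the third is Definition \ref{definition sum of embeddings}.

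There is essentially no obstacle beyond this; the substantive work is already done. If one preferred not to invoke the isomorphism theorem, the alternative is to argue directly that $\Psi\left(x_{1}\cdot x_{2}\right)$ and $\Psi x_{1}+\Psi x_{2}$ are both embeddings of $\mathbb{R}_{+}$ into $\mathbb{R}_{>1}$ (the latter by Theorem \ref{sum of homomorphisms}) that send $1$ to $x_{1}\cdot x_{2}$ — since $\left(\Psi x_{1}+\Psi x_{2}\right)1=\left(\Psi x_{1}\right)1\cdot\left(\Psi x_{2}\right)1=x_{1}\cdot x_{2}$ — and then conclude they coincide by Theorem \ref{unique morphism}, which is exactly how the isomorphism claim was established in Section \ref{sec Generalized Multiple}.
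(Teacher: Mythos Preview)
Your proof is correct and follows exactly the argument underlying the paper's one-line proof, which simply cites Theorem~\ref{multiple V 1}. The paper set up the power-function section so that the generalized multiple $aa'$ of Section~\ref{sec Generalized Multiple} (with $M=\mathbb{R}_{+}$, $M'=\mathbb{R}_{>1}$) is exactly $x^{y}$ with $a=y$, $a'=x$, and the ``$+$'' of $M'$ is $\cdot$; under that dictionary Theorem~\ref{multiple V 1} reads $(x_{1}\cdot x_{2})^{y}=x_{1}^{y}\cdot x_{2}^{y}$ verbatim, so you have essentially re-derived that theorem in situ rather than invoking it.
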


\begin{proof}
Theorem \ref{multiple V 1}.
\end{proof}

\begin{theorem}
$x^{y_{1}+y_{2}}=x^{y_{1}}\cdot x^{y_{2}}$
\end{theorem}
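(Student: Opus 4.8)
The plan is to recognize this as the exact analogue of the preceding theorem $(x_{1}\cdot x_{2})^{y}=x_{1}^{y}\cdot x_{2}^{y}$, which was deduced from Theorem \ref{multiple V 1} ($a(a'+b')=aa'+ab'$); here the two arguments play swapped roles, so I would appeal to Theorem \ref{multiple V 2} ($(a+b)a'=aa'+ba'$) instead.

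Concretely, I would invoke Theorem \ref{multiple V 2} in the instance $M=\mathbb{R}_{+}$, $M'=\mathbb{R}_{>1}$, taking $a=y_{1}$, $b=y_{2}$ in $\mathbb{R}_{+}$ and $a'=x$ in $\mathbb{R}_{>1}$. Unwinding the notation of Section \ref{sec Generalized Multiple}, the generalized product $yx$ equals $(\Psi x)y$, which is precisely $x^{y}$ by the definition just given; hence the left-hand side $(y_{1}+y_{2})x$ of Theorem \ref{multiple V 2} is $x^{y_{1}+y_{2}}$, and the right-hand side $y_{1}x+y_{2}x$ is $x^{y_{1}}+x^{y_{2}}$. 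The only point needing care — and the closest thing to an obstacle — is purely notational: the $+$ appearing on the right-hand side of Theorem \ref{multiple V 2} denotes the binary operator of the codomain magnitude space, and that operator for $\mathbb{R}_{>1}$ is $\cdot$, as emphasized in the discussion preceding the definition of $x^{y}$. Therefore $y_{1}x+y_{2}x=x^{y_{1}}\cdot x^{y_{2}}$, and the theorem follows.

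Equivalently, one can observe that $\Psi x$ is by construction an embedding of $\mathbb{R}_{+}$ into $\mathbb{R}_{>1}$, hence a homomorphism, so Definition \ref{definition homomorphism} applied with target operator $\cdot$ gives directly $(\Psi x)(y_{1}+y_{2})=\bigl((\Psi x)y_{1}\bigr)\cdot\bigl((\Psi x)y_{2}\bigr)$, i.e. $x^{y_{1}+y_{2}}=x^{y_{1}}\cdot x^{y_{2}}$. Either route is a two-line computation with no substantive difficulty.
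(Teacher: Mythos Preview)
Your proposal is correct and is exactly the paper's approach: the paper's proof consists of the single citation ``Theorem \ref{multiple V 2},'' and your write-up simply unpacks what that citation means in the present notational setup (including the key observation that the binary operator in $\mathbb{R}_{>1}$ is $\cdot$).
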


\begin{proof}
Theorem \ref{multiple V 2}.
\end{proof}

\end{document}